\newtheorem{theorem}{Theorem}[section]
\newtheorem{remark}[theorem]{Remark}
\newtheorem{lemma}[theorem]{Lemma}
\newtheorem{definition}[theorem]{Definition}
\numberwithin{equation}{section}
\newcommand{\R}{\mathbb R}
\newcommand{\N}{\mathbb N}
\newcommand{\be}{\begin{equation}}
\newcommand{\ee}{\end{equation}}
\newcommand{\ba}{\begin{eqnarray}}
\newcommand{\ea}{\end{eqnarray}}
\newcommand{\beq}{\begin{equation}}
\newcommand{\eeq}{\end{equation}}
\definecolor{red}{rgb}{0,0,0}
\numberwithin{equation}{section}
\def\Omc{\R\setminus (-1,1)}
\def\RR{{\mathbb{R}}}
\def\NN{{\mathbb{N}}}
\def\Om{(-1,1)}
\def\bbOm{\Omega}
\def\bOm{\Om}
\newcommand{\flap}{(-\Delta)^s}
\begin{document}

\title{A control-based spatial source reconstruction in fractional heat equations}

\author{Galina Garc\'ia\thanks{Universidad de Santiago de Chile, Departamento de
Matem\'atica y Ciencia de la Computaci\'on, Facultad de Ciencia, Casilla 307-Correo 2,
Santiago, Chile. \texttt{galina.garcia@usach.cl}, \texttt{joaquin.vidal.p@usach.cl}} \and Joaqu\'in Vidal\footnotemark[1]  \and
Sebasti\'an Zamorano\thanks{Faculty of Engineering, University of Deusto, Av. Universidades 24, 48007, Bilbao, Basque Country, Spain. \texttt{sebastian.zamorano@deusto.es}}}
 
 

\date{}

\maketitle

\begin{abstract}
In this study, an inverse source problem for a nonlocal heat equation governed by the fractional Laplacian is analyzed. The main objective is to recover the spatial component of a separable source term $F(x,t)=f(x)\sigma(t)$ from partial measurements of the solution $u$ and its time derivative $u_t$ on a subset of the spatial domain. The proposed approach relies on the null controllability of the fractional heat equation when the diffusion order satisfies $s\in(1/2,1)$, linking the controllability theory with inverse problem techniques.
By combining spectral analysis, Volterra integral equations, and control-based arguments, we derived an explicit reconstruction formula for the Fourier coefficients of the unknown function $f$. This formula provides a rigorous and computationally feasible framework for recovering spatially distributed sources with limited observational data. A numerical scheme based on finite element discretization of the fractional Laplacian and penalized Hilbert Uniqueness Method (HUM) was developed to compute the required controls and implement the reconstruction algorithm.
Several numerical experiments confirmed the effectiveness, accuracy, and stability of the proposed method, even when the available data were restricted to a subdomain. The results highlight the potential of control-theoretic tools for addressing inverse problems in fractional diffusion models and open perspectives for applications in imaging and anomalous transport phenomena.

\end{abstract}

\bigskip
\noindent\textbf{Keywords:}  Fractional heat equation, inverse source problem, fractional Laplacian, null controllability, reconstruction formula

\section{Introduction}
\subsection{The inverse problem}
Inverse problems for partial differential equations (PDEs) have garnered significant attention owing to their wide-range of applications in engineering, physics, and medical imaging. Among these, the inverse source problem, where the goal is to reconstruct an unknown source term from partial measurements of the state of the system, is of particular interest. This problem becomes particularly challenging when dealing with nonlocal operators, such as the fractional Laplacian, which models anomalous diffusion processes encountered in heterogeneous materials, biology, and finance.

In this study, we addressed the inverse source problem for the fractional heat equation, focusing on the reconstruction of the spatial component of a source term from limited observations of the state of the system and its time derivative over a sub-domain. Namely, let $\omega \subset (-1,1)$ be an arbitrary nonempty subset, and let $T > 0$. Let us consider the nonlocal heat equation as
\begin{align}\label{HE-EX}
\begin{cases}
\partial_t u + (-\partial_x^2)^{s} u = F(x,t) & \text{in } (-1,1) \times (0,T),\\
u = 0 & \text{in } (\mathbb{R} \setminus (-1,1)) \times (0,T), \\
u(\cdot,0) = 0 & \text{in } (-1,1).
\end{cases}
\end{align}
In equation \eqref{HE-EX}, $u = u(x,t)$ represents the state, $T > 0$ and $0 < s < 1$ are real parameters, and $F(x,t) = f(x)\sigma(t)$ is the source term with a spatial component \( f(x) \) and a temporal factor \( \sigma(t) \). The operator $(-\partial_x^2)^s$ denotes the fractional Laplacian, which is formally defined by the singular integral
\begin{align*}
(-\partial_x^2)^s u(x) = C_{s} \, \text{P.V.} \int_{\mathbb{R}} \frac{u(x) - u(y)}{|x - y|^{1 + 2s}} \, dy, \quad x \in \mathbb{R},
\end{align*}
where $C_{s}$ is a normalization constant that depends only on $s$. In Section \ref{sec-2}, we provide a precise definition. For simplicity, the initial condition is set to zero.

The main goal of this study is to investigate an inverse source problem for the nonlocal heat equation \eqref{HE-EX} assuming that $F$ can be written as $F(x,t)=f(x)\sigma(t)$ with a known temporal factor $\sigma\in W^{1,\infty}(0,T)$ and $f\in L^2(-1,1)$ as an unknown spatial component. The aim of this study is to recover the unknown spatial source term $f$ based on the observations of $u$ and $u_t$ within a subset $\omega\subset (-1,1)$ over the time interval $(0,T)$. Moreover, we determine a reconstruction formula for the Fourier coefficients of $f$.

Based on the pioneering work of \cite{yamamoto1995stability} concerning an inverse source hyperbolic problem and in \cite{garcia2017source,garcia2013heat} for Stokes and heat equations, respectively, our work builds on the interplay between control theory, spectral analysis, and Volterra integral equations to derive a robust reconstruction formula. The key contributions of this study are as follows.

\begin{enumerate}
\item We establish a novel reconstruction formula for the Fourier coefficients of the unknown spatial source term. This formula takes advantage of the null controllability property of the fractional heat equation, which holds when fractional order $s\in(1/2,1)$. Our approach relies on spectral decomposition and the solution of the associated Volterra integral equations, enabling recovery of the source under limited measurement data.

\item This methodology combines the spectral properties of the fractional Laplacian with control-based techniques. By exploiting the eigenvalues and eigenfunctions of the fractional Laplacian, we reduce the problem of solving a family of control problems and integral equations, which are subsequently implemented numerically.

\item We provide a comprehensive numerical scheme to approximate the eigenvalues and eigenfunctions of the fractional Laplacian, solve optimal control problems, and implement the reconstruction formula. Numerical experiments demonstrated the accuracy and stability of the proposed approach, even for sources with high-frequency components or discontinuities. Specifically, we employed the penalized Hilbert Uniqueness Method (HUM), which allows us to compute the control inputs while minimizing the associated computational cost. The resulting numerical scheme not only achieves high accuracy in reconstructing the Fourier coefficients of $f(x)$ but also maintains computational efficiency through a carefully designed balance between the number of eigenfunctions used and the overall cost.

\item Our analysis extends existing results on inverse problems for fractional PDEs by incorporating control-theoretic tools. The derived reconstruction formula was validated theoretically and numerically, offering a systematic method to handle inverse problems in nonlocal settings.
\end{enumerate}

This study contributes to the growing body of research on inverse problems for fractional PDEs by providing a practical and theoretically sound framework for source reconstructions. The results have potential applications in imaging, material science, and other fields where nonlocal diffusion processes play a central role.

\subsection{State of the art}
Nonlocal operators, particularly the fractional Laplacian, have become indispensable tools for modelling phenomena in which traditional local PDEs fail, such as anomalous diffusion, turbulence, and wave propagation in heterogeneous media (e.g., \cite{HAntil_SBartels_2017a, BBC, Val, SV2} and references therein). Their ability to capture long-range interactions makes them ideal for application in physics, finance, and imaging. However, the nonlocality and singular kernels of these operators introduce significant challenges in terms of analysis and numerics, particularly for inverse problems.

Inverse problems for fractional PDEs, such as reconstructing unknown sources or coefficients from partial measurements, have been increasingly studied owing to their relevance in thermal imaging and material science \cite{MR4065621,MR4718707,MR4237942,MR4083776,MR4078233,HELIN20207498}. Although progress has been made in source identification for fractional diffusion equations \cite{MR4185290,zhang2011inverse}, reconstructing the spatial component of a source term under limited observations remains underexplored. This gap is particularly pronounced for the fractional heat equation, where nonlocality complicates both the theoretical analysis and numerical approximation.

Our study addresses this challenge by combining spectral methods, control theory, and Volterra integral equations to derive a rigorous reconstruction formula. By leveraging the null controllability of the fractional heat equation, we provide a computationally feasible framework for recovering spatial sources from partial data, thereby bridging the theoretical advances with practical applications.

\subsection{Outline of the article}
The remainder of this paper is organized as follows. In Section \ref{sec-2}, we introduce the function spaces required for our analysis and present the intermediate results essential for the main proofs. In Section \ref{sec-3}, we derive and prove the reconstruction formula for the inverse problem. In Section \ref{sec-4}, we discuss an optimal control approach related to the reconstruction methodology. Finally, in Section \ref{sec-5}, we detail the numerical scheme used in this study, and Section \ref{sec-6} presents numerical experiments that illustrate the effectiveness of our approach. Finally, Section \ref{sec-7}, presents our concluding remarks.


\section{Preliminary results}\label{sec-2}
In this section, we introduce the necessary notations and recall some known results that are essential for stating and proving our main results.

\subsection{Functional setting}
First, we define the fractional-order Sobolev space.  For $0<s<1$ and $\Omega\subset\RR$ an arbitrary open set,  we let
\begin{align*}
H^{s}(\Omega):=\left\{u\in L^2(\Omega):\;\int_\Omega\int_\Omega\frac{|u(x)-u(y)|^2}{|x-y|^{1+2s}}\;dxdy<\infty\right\},
\end{align*}
and we endow it with the norm defined by
\begin{align*}
\|u\|_{H^{s}(\Omega)}:=\left(\int_\Omega|u(x)|^2\;dx+\int_\Omega\int_\Omega\frac{|u(x)-u(y)|^2}{|x-y|^{1+2s}}\;dxdy\right)^{\frac 12}.
\end{align*}
We set
\begin{align*}
H_0^{s}(\bbOm):=\Big\{u\in H^{s}(\R):\;u=0\;\mbox{ a.e. in }\;\R\setminus \Omega\Big\}
=\Big\{u\in H^{s}(\R),\;\;\operatorname{supp}[u]\subset\;\bbOm\Big\}.
\end{align*}
If $\Omega=\RR$ in the above definition, then we mean that $H_0^{s}(\overline{\RR})=H^s(\RR)$.

We shall denote by $H^{-s}(\bbOm)$ the dual of $H_0^{s}(\bbOm)$ with respect to the pivot space $L^2(\Omega)$, that is, $H^{-s}(\bbOm)=(H_0^{s}(\bbOm))^\star$.  The following continuous embedding holds
\begin{align*}
H_0^s(\overline{\Omega})\hookrightarrow L^2(\Omega)\hookrightarrow H^{-s}(\overline{\Omega}).
\end{align*}
For more information on fractional Sobolev spaces, refer to \cite{NPV,War} and references therein.

\subsection{Fractional Laplacian}
Next, we provide a rigorous definition of the fractional Laplace operator. Let 
\begin{align*}
\mathcal L_s^{1}(\R):=\left\{u:\R\to\R\;\mbox{ measurable},\; \int_{\R}\frac{|u(x)|}{(1+|x|)^{1+2s}}\;dx<\infty\right\}.
\end{align*}
For $u\in \mathcal L_s^{1}(\R)$ and $\varepsilon>0$ we set
\begin{align*}
(-\partial_x^2)_\varepsilon^s u(x):= C_{s}\int_{\{y\in\R:\;|x-y|>\varepsilon\}}\frac{u(x)-u(y)}{|x-y|^{1+2s}}\;dy,\;\;x\in\R,
\end{align*}
where $C_{s}$ is a normalization constant given by
\begin{align}\label{CNs}
C_{s}:=\frac{s2^{2s}\Gamma\left(\frac{2s+1}{2}\right)}{\pi^{\frac
12}\Gamma(1-s)}.
\end{align}
The {\em fractional Laplacian}  $(-\partial_x^2)^s$ is then defined for $u\in \mathcal L_s^{1}(\R)$ by the singular integral:
\begin{align}\label{fl_def}
(-\partial_x^2)^su(x):=C_{s}\,\mbox{P.V.}\int_{\R}\frac{u(x)-u(y)}{|x-y|^{1+2s}}\;dy=\lim_{\varepsilon\downarrow 0}(-\partial_x^2)_\varepsilon^s u(x),\;\qquad x\in\R,
\end{align}
provided that the limit above exists.
We notice that if $u\in \mathcal L_s^{1}(\R)$, then $ v:=(-\partial_x^2)_\varepsilon^s u$ exists for every $\varepsilon>0$, $v$ is also continuous at the continuity points of  $u$.  
For more details on the fractional Laplace operator, see \cite{Caf3,NPV,GW-CPDE,War} and references therein.

Now, we consider the realization of $(-\partial_x^2)^s$ in $L^2(-1,1)$ with the zero-Dirichlet condition $u=0$ in $\Omc$. More precisely, we consider the closed and bilinear form $\mathcal F:H_0^{s}(\bOm)\times H_0^{s}(\bOm)\to\RR$ given by
\begin{align*}
\mathcal F(u,v):=\frac{C_{s}}{2}\int_{\R}\int_{\R}\frac{(u(x)-u(y))(v(x)-v(y))}{|x-y|^{1+2s}}\;dxdy,\;\qquad u,v\in H_0^{s}(\bOm).
\end{align*}
Let $(-\partial_x^2)_D^s$ be the selfadjoint operator in $L^2(-1,1)$ associated with $\mathcal F$ in the sense that
\begin{equation*}
\begin{cases}
D((-\partial_x^2)_D^s)=\Big\{u\in H_0^{s}(\bOm),\;\exists\;f\in L^2(-1,1),\;\mathcal F(u,v)=(f,v)_{L^2(-1,1)}\;\forall\;v\in H_0^{s}(\bOm)\Big\},\\
(-\partial_x^2)_D^su=f.
\end{cases}
\end{equation*}
It is easy to see that
\begin{equation}\label{DLO}
D((-\partial_x^2)_D^s)=\left\{u\in H_0^{s}(\bOm),\; (-\partial_x^2)^su\in L^2(-1,1)\right\},\;\;\;
(-\partial_x^2)_D^su=((-\partial_x^2)^su)|_{(-1,1)}.
\end{equation}
Then, $(-\partial_x^2)_D^s$ is the realization of $(-\partial_x^2)^s$ in $L^2(-1,1)$ with the condition $u=0$ in $\Omc$. It has been shown in \cite{SV2} that $(-\partial_x^2)_D^s$ has a compact resolvent and its eigenvalues form a non-decreasing sequence of real numbers $0<\lambda_1\le\lambda_2\le\cdots\le\lambda_n\le\cdots$ satisfying $\lim_{n\to\infty}\lambda_n=\infty$.  In addition, if $\frac 12\le s<1$, the eigenvalues have a finite multiplicity.
Let $\{\varphi_n\}_{n\in\NN}$ be the orthonormal basis of the eigenfunctions associated with eigenvalues $\{\lambda_n\}_{n\in\NN}$. Then, $\varphi_n\in D((-\partial_x^2)_D^s)$ for every $n\in\NN$,  $\{\varphi_n\}_{n\in\NN}$ is total in $L^2(-1,1)$ and satisfies 
\begin{equation}\label{ei-val-pro}
\begin{cases}
(-\partial_x^2)^s\varphi_n=\lambda_n\varphi_n\;\;&\mbox{ in }\;(-1,1),\\
\varphi_n=0\;&\mbox{ in }\;\Omc.
\end{cases}
\end{equation}

Let us mention the fact that the eigenvalues $\{\lambda_n\}_{n\in\N}$ of the realization of $(-\partial_x^2)^s$ in $L^2(-1,1)$ with the zero Dirichlet exterior condition  satisfy the following asymptotic formula:
\begin{align}\label{lam}
\lambda_n=\left(\frac{n\pi}{2}-\frac{(2-2s)\pi}{8}\right)^{2s}+O\left(\frac{1}{n}\right)\;\text{ as }\, n\to\infty,
\end{align}
an important result that was proved in \cite{kwasnicki2012eigenvalues}. This property plays a crucial role in both theoretical and numerical results. Note that, in contrast to the local Laplace operator, this is the only known behavior of the eigenvalues of $(-\partial_x^2)_D^s$.

Next, for $u\in H^{s}(\R)$ we introduce the {\em nonlocal normal derivative $\mathcal N_s$} given by 
\begin{align}\label{NLND}
\mathcal N_su(x):=C_{s}\int_{-1}^1\frac{u(x)-u(y)}{|x-y|^{1+2s}}\;dy,\;\;\;x\in\R \setminus\bOm,
\end{align}
where $C_{s}$ is a constant given by \eqref{CNs}.
To conclude these preliminaries regarding the fractional Laplace operator, the following integration by parts formula is presented.

\begin{lemma}
Let $u\in H_0^s(\bOm)$ be such that $(-\partial_x^2)^s u\in L^2(-1,1)$. Then for every $v\in H^s(\RR)$, the following identity holds
\begin{align}\label{Int-Part}
\frac{C_{s}}{2}\int_{\RR}\int_{\RR}\frac{(u(x)-u(y))(v(x)-v(y))}{|x-y|^{1+2s}}\;dxdy=\int_{-1}^1v(x)(-\partial_x^2)^su(x)\;dx+\int_{\Omc}v(x)\mathcal N_su(x)\;dx,
\end{align}
holds.
\end{lemma}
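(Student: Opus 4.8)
The plan is to read \eqref{Int-Part} as a nonlocal Gauss--Green identity and to prove it by symmetrizing the quadratic form on the left. Writing $K(x,y):=|x-y|^{-(1+2s)}$, which is symmetric in its two arguments, the idea is to split the factor $v(x)-v(y)$ and relabel $x\leftrightarrow y$ in the term containing $v(y)$; using $K(x,y)=K(y,x)$ together with the antisymmetry of $u(x)-u(y)$, the two resulting pieces coincide, so that the prefactor $\tfrac12$ is absorbed and
\[\frac{C_{s}}{2}\int_\RR\int_\RR (u(x)-u(y))(v(x)-v(y))K(x,y)\,dxdy=C_{s}\int_\RR v(x)\left(\int_\RR (u(x)-u(y))K(x,y)\,dy\right)dx,\]
where the inner integral is exactly $(-\partial_x^2)^su(x)$. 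Splitting the outer integral over $(-1,1)$ and $\Omc$ and identifying the exterior contribution with $\mathcal N_su$ then yields the claim.

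Carrying this out rigorously requires care, because the singularity of $K$ on the diagonal makes the single integral $\int\int (u(x)-u(y))v(x)K\,dxdy$ only conditionally (principal-value) convergent, so that Fubini's theorem and the change of variables are not immediately licit. To bypass this I would first truncate the kernel to the region $\{|x-y|>\varepsilon\}$, where the integrand is absolutely integrable and the symmetrization above is fully justified; this gives, for every $\varepsilon>0$,
\[\frac{C_{s}}{2}\int\int_{|x-y|>\varepsilon}(u(x)-u(y))(v(x)-v(y))K\,dxdy=\int_\RR v(x)\,(-\partial_x^2)_\varepsilon^s u(x)\,dx.\]
Letting $\varepsilon\downarrow 0$, the left-hand side converges to the full double integral by dominated convergence: the dominating $L^1(\RR\times\RR)$ bound follows from the Cauchy--Schwarz inequality together with the finiteness of the Gagliardo seminorms of $u\in H_0^{s}(\bOm)\subset H^s(\RR)$ and $v\in H^s(\RR)$.

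It remains to pass to the limit on the right and split it. Writing $\int_\RR=\int_{-1}^1+\int_{\Omc}$, on the exterior region one uses that $u$ is supported in $[-1,1]$: for $x\in\Omc$ there is no diagonal singularity, $u(x)=0$, and hence directly from \eqref{fl_def} and \eqref{NLND},
\[(-\partial_x^2)^s u(x)=C_{s}\int_{-1}^1\frac{-u(y)}{|x-y|^{1+2s}}\,dy=\mathcal N_su(x),\qquad x\in\Omc;\]
here $(-\partial_x^2)_\varepsilon^s u(x)\to(-\partial_x^2)^s u(x)$ with an integrable majorant, and $\int_{\Omc}|v\,\mathcal N_su|<\infty$ because $\mathcal N_su$ decays like $|x|^{-(1+2s)}$ at infinity while $v\in L^2(\RR)$, so Cauchy--Schwarz applies. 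On the interior region, the convergence $\int_{-1}^1 v\,(-\partial_x^2)_\varepsilon^s u\to\int_{-1}^1 v\,(-\partial_x^2)^s u$ is secured, via standard convergence of truncated singular integrals, by the hypothesis $(-\partial_x^2)^s u\in L^2(-1,1)$ tested against $v\in L^2(-1,1)$. Collecting the three limits gives \eqref{Int-Part}.

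The main obstacle is precisely this interplay between the non-absolute convergence of the symmetrized single integral and the interchange of the limit $\varepsilon\downarrow0$ with integration against $v$ on $(-1,1)$: the truncation device resolves the former, while the regularity assumption $(-\partial_x^2)^s u\in L^2(-1,1)$ is exactly what is needed for the latter. An alternative, equivalent route would be to establish \eqref{Int-Part} first for smooth, compactly supported $u$ and $v$, where every manipulation is elementary, and then extend by density in the graph norm of $(-\partial_x^2)_D^s$ and in $H^s(\RR)$; the only delicate point there is the continuity of the boundary term $\int_{\Omc} v\,\mathcal N_su$ under approximation, which again rests on the far-field decay of $\mathcal N_su$.
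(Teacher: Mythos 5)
The paper itself does not prove this lemma; it defers to \cite[Lemma 3.3]{DRV} and \cite[Proposition 3.7]{War-ACE}. Your strategy---truncate the kernel to $\{|x-y|>\varepsilon\}$, symmetrize using $K(x,y)=K(y,x)$ and Fubini, then let $\varepsilon\downarrow 0$---is the standard route taken in those references, and your treatment of the left-hand side is sound: the truncated symmetrization is legitimate, and the full double integral dominates by Cauchy--Schwarz on the Gagliardo seminorms of $u\in H_0^s(\bOm)\subset H^s(\RR)$ and $v\in H^s(\RR)$.

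The genuine gap is in the exterior term. You justify $\int_{\Omc}|v|\,|\mathcal N_su|\,dx<\infty$ (and hence the dominated-convergence passage there) solely by the far-field decay $|\mathcal N_su(x)|\lesssim |x|^{-(1+2s)}$, but the delicate point is at the other end: the blow-up of $\mathcal N_su(x)$ as $x\to\pm1$ from outside. Indeed, for $x\in\Omc$ one has $|\mathcal N_su(x)|\le C_s\int_{-1}^1|u(y)|\,|x-y|^{-1-2s}\,dy$, and after splitting $|v(x)|\le|v(x)-v(y)|+|v(y)|$ (the cross term being controlled by Cauchy--Schwarz as above, since $u(x)=0$ on $\Omc$ makes $|u(y)|=|u(x)-u(y)|$ there), the question of integrability reduces to the weighted integral
\begin{equation*}
\frac{1}{2s}\int_{-1}^1|u(y)|\,|v(y)|\Big[(1-y)^{-2s}+(1+y)^{-2s}\Big]\,dy .
\end{equation*}
Its finiteness is not free: it requires a fractional Hardy-type inequality for $u\in H_0^s(\bOm)$ (which fails at $s=1/2$) and/or the boundary regularity furnished by the hypothesis $(-\partial_x^2)^su\in L^2(-1,1)$, combined for $s>1/2$ with the embedding $H^s(\RR)\hookrightarrow L^\infty(\RR)$. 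None of this appears in your argument, and your closing paragraph on the alternative density route again attributes the delicacy of the boundary term to far-field decay, so the same hole persists there. This is precisely the content that the hypotheses of the lemma (and of the cited references) are designed to supply.

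A secondary gap is the interior limit: you claim $\int_{-1}^1 v\,(-\partial_x^2)_\varepsilon^su\,dx\to\int_{-1}^1 v\,(-\partial_x^2)^su\,dx$ ``via standard convergence of truncated singular integrals'' because $(-\partial_x^2)^su\in L^2(-1,1)$. That hypothesis gives a.e.\ existence of the principal value and membership of the limit in $L^2$, but a.e.\ convergence alone does not let you pass the limit inside the integral against $v$; you need a uniform-in-$\varepsilon$ bound or uniform integrability of the truncations $(-\partial_x^2)_\varepsilon^su$ on $(-1,1)$, which Calder\'on--Zygmund theory supplies for smooth compactly supported functions but which is not automatic for $u$ merely in $H_0^s(\bOm)$ with $(-\partial_x^2)^su\in L^2(-1,1)$. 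The cited works close both gaps: \cite{DRV} proves the identity for bounded $C^2$ functions, where neither issue arises, and \cite{War-ACE} performs the extension to the present functional setting---that extension, rather than the symmetrization itself, is the real mathematical content of the lemma.
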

We refer to \cite[Lemma 3.3]{DRV} (see also \cite[Proposition 3.7]{War-ACE}) for the proof and more details.

\subsection{Nonlocal heat equation}
In this section, we present the well-known results regarding the existence, uniqueness, and regularity of the solutions to equation \eqref{HE-EX}. In addition, we recall the null controllability result for the nonlocal heat equation.

Throughout this section and the remainder of the article, $\{\varphi_n\}_{n\in\NN}$ denotes the orthonormal basis of the eigenfunctions of the operator $(-\partial_x^2)_D^s$ associated with the eigenvalues $\{\lambda_n\}_{n\in\NN}$. Furthermore, for a given $u\in L^2(-1,1)$ and $n\in\NN$, we let $u_n:=(u,\varphi_n)_{L^2(-1,1)}$ and for a given set $E\subseteq \RR^N$ ($N\ge 1)$, we  	denote by $(\cdot,\cdot)_{L^2(E)}$ the scalar product in $L^2(E)$. Moreover,  $\mathcal{D}(E)$ denotes the space of all continuously infinitely differentiable functions with compact support in $E$.

For completeness, we introduce the notion of a solution for the problem
\begin{align}\label{IPHE}
\begin{cases}
\partial_t u + (-\partial_x^2)^{s} u = F(x,t) & \mbox{ in }\; (-1,1)\times(0,T),\\
u=0 &\mbox{ in }\; (\Omc)\times (0,T), \\
u(\cdot,0) = 0&\mbox{ in }\; (-1,1).
\end{cases}
\end{align}

\begin{definition}\label{D1}
Let $F\in L^2(0,T; H^{-s}(-1,1))$. We say that $u\in  L^2(0,T; H_0^s(-1,1))$, with $u_t\in L^2(0,T; H^{-s}(-1,1))$, is a finite-energy solution of the nonlocal heat equation  \eqref{IPHE} if the following identity holds:
\begin{multline}
\int_0^T\int_{-1}^1 u_t w dxdt +\frac{C_s}{2}\int_0^T\int_{-1}^1\int_{-1}^1 \frac{(u(x)-u(y))(w(x)-w(y))}{|x-y|^{1+2s}}dxdydt\\ = \int_0^T \langle F, w\rangle_{H^{-s}(-1,1), H_0^s(-1,1)} dt,
\end{multline}
for any $w\in L^2(0;T; H_0^s(-1,1))$, where $\langle \cdot, \cdot \rangle_{H^{-s}(-1,1), H_0^s(-1,1)}$ denotes the duality pairing between $H^{-s}((-1,1))$ and $H_0^s(-1,1)$.
\end{definition}

\begin{remark}\label{r1}
\begin{enumerate}
\item According to \cite[Theorem 10]{leonori2015basic}, for any  $F\in L^2(0,T;H^{-s}(-1,1))$ the system \eqref{IPHE} admits a unique weak solution according to Definition \ref{D1}.

\item Moreover, if $u\in L^2(0,T; H_0^s((-1,1)))$ and $u_t\in L^2(0,T; H^{-s}((-1,1)))$, then $u\in C([0,T]; L^2(-1,1))$. Thus the identity $u(\cdot,0)=0$ makes sense in $L^2(-1,1)$.

\item If $F\in L^2(\omega\times(0,T))$ and $u_0=0$, then \cite[Theorem 1.5]{BWZ1} shows that
\begin{align}\label{regularity}
u\in L^2(0,T; H_{loc}^{2s}(-1,1))\cap L^{\infty}(0,T; H_0^s((-1,1)))\quad \text{and}\quad u_t\in L^2((-1,1)\times (0,T)).
\end{align}

\item Finally, if $F(x,t)=f(x)\sigma(t)$, it is immediately shown that the solution of \eqref{IPHE} with non-zero initial data $u(x,0)=u_0(x)$, can be expressed based on the eigenfunctions $\{\varphi_n\}_{n\in\NN}$ of the operator $(-\partial _x^2)_D^s$ as follows:
\begin{align}\label{series1}
u(x,t) =\sum_{k\geq 1}u_{0,n}e^{-\lambda_k t}\varphi_k(x)+ \sum_{k\geq 1}\left( f_{k} \int_0^t e^{-\lambda_k(t-s)}\sigma(s)ds \right)\varphi_k(x).
\end{align}
\end{enumerate}
\end{remark}

Another important result for the nonlocal heat equation on which our study is based is the following null controllability of the nonlocal heat equation. The proof of this theorem employs spectral analysis techniques based on \cite{kwasnicki2012eigenvalues} and can be found in \cite{biccaricontrollability}.
\begin{theorem}
Given $\tau \in (0,T]$, $\omega\subset \Omega$ with nonempty interior, and $\varphi_0\in L^2(-1,1)$, there exists a control function $h^{(\tau)}=h^{(\tau)}(\varphi_0)\in L^2(0,\tau;L^2(\omega))$ such that the corresponding solution $\phi$ of the problem
\begin{align}\label{null-control}
\begin{cases}
-\partial_t \phi + (-\partial_x^2)^{s} \phi = h^{(\tau)}\chi_{\omega\times(0,\tau)} & \text{ in }\; (-1,1)\times(0,\tau),\\
\phi=0 &\mbox{ in }\; (\Omc)\times (0,\tau), \\
\phi(\cdot,\tau) = \varphi_0&\mbox{ in }\; (-1,1),
\end{cases}
\end{align}
satisfies
\begin{align}\label{null-control-2}
\phi(\cdot, 0) = 0\quad \text{in }\Omega,
\end{align}
if and only if $s>\frac 12$. Moreover, there exists a constant $C>0$ such that the following inequality holds
\begin{align}\label{obs-ine}
\|h^{(\tau)}\|_{L^2(0,\tau; L^2(\omega))}\leq C\|\varphi_0\|_{L^2(-1,1)}.
\end{align}
\end{theorem}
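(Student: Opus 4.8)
The plan is to prove the equivalence by duality and to construct the control via the moment method, with the eigenvalue asymptotics \eqref{lam} as the decisive input. First I would eliminate the final-time/backward structure of \eqref{null-control} by the time reversal $\psi(x,t):=\phi(x,\tau-t)$, which recasts \eqref{null-control}--\eqref{null-control-2} as the forward null-controllability problem of steering $\psi(\cdot,0)=\varphi_0$ to $\psi(\cdot,\tau)=0$ with control supported in $\omega\times(0,\tau)$. By the Hilbert Uniqueness Method this is equivalent to the observability inequality $\|z(\cdot,0)\|_{L^2(-1,1)}^2\le C\int_0^\tau\int_\omega|z|^2\,dx\,dt$ for the adjoint homogeneous equation; moreover the HUM control is the one of minimal $L^2(\omega\times(0,\tau))$-norm, so that the bound \eqref{obs-ine} is automatic once the observability constant $C$ is identified. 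It therefore suffices to produce a control, which I will do by solving the associated moment problem.

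Expanding in the orthonormal basis $\{\varphi_n\}$ of \eqref{ei-val-pro} and writing $\psi=\sum_n\psi_n(t)\varphi_n$, the formula \eqref{series1} shows that the constraint $\psi(\cdot,\tau)=0$ is equivalent to the infinite family of moment equations $\int_0^\tau e^{\lambda_n t}\big(\int_\omega \tilde h(x,t)\varphi_n(x)\,dx\big)\,dt=-\varphi_{0,n}$, $n\in\NN$, where $\varphi_{0,n}=(\varphi_0,\varphi_n)_{L^2(-1,1)}$ and $\tilde h(x,t)=h^{(\tau)}(x,\tau-t)$. The core of the argument is the construction of a family $\{q_n\}_{n\in\NN}\subset L^2(0,\tau)$ biorthogonal to the exponentials, $\int_0^\tau e^{\lambda_m t}q_n(t)\,dt=\delta_{mn}$, with quantitatively controlled norms. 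This is where \eqref{lam} enters through two consequences of $\lambda_n\sim(n\pi/2)^{2s}$: the spectral gap $\lambda_{n+1}-\lambda_n\gtrsim n^{2s-1}$, and, crucially, the summability $\sum_n\lambda_n^{-1}<\infty$, which holds precisely when $2s>1$, that is $s>\tfrac12$. This summability is exactly the condition under which the classical Fattorini--Russell construction of a biorthogonal family to $\{e^{\lambda_n t}\}$ converges and yields controlled norms $\|q_n\|_{L^2(0,\tau)}$.

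With the biorthogonal family in hand I would build the control by superposition, exploiting the freedom in the spatial variable. Since the eigenfunctions are not orthogonal over the strict subdomain $\omega$, one first records that each of them is detectable there: by unique continuation for \eqref{ei-val-pro} no eigenfunction can vanish on the open set $\omega$, and the asymptotics underlying \eqref{lam} make this quantitative as a lower bound for $\|\varphi_n\|_{L^2(\omega)}$. Choosing $\tilde h(x,t)=\chi_\omega(x)\sum_n g_n(x)q_n(t)$ with $g_n$ a suitable multiple of $\varphi_n$ on $\omega$ solving $\int_\omega g_n\varphi_n=-\varphi_{0,n}$, the moment equations are satisfied mode by mode; the $L^2(\omega\times(0,\tau))$-convergence of the series and the estimate $\|\tilde h\|_{L^2(\omega\times(0,\tau))}\le C\|\varphi_0\|_{L^2(-1,1)}$ then follow from the bounds on $\|q_n\|$ and $\|\varphi_n\|_{L^2(\omega)}^{-1}$ combined with $\sum_n|\varphi_{0,n}|^2=\|\varphi_0\|_{L^2(-1,1)}^2$. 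Undoing the time reversal returns $h^{(\tau)}$ and the inequality \eqref{obs-ine}.

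For the necessity of $s>\tfrac12$ I would argue by contradiction: when $s\le\tfrac12$, \eqref{lam} gives $2s\le1$ and hence $\sum_n\lambda_n^{-1}=\infty$, a regime in which no biorthogonal family with the required bounds exists and the moment problem is not solvable for arbitrary $\varphi_0\in L^2(-1,1)$; equivalently the observability inequality fails, ruling out null controllability. The main obstacle is the construction of the biorthogonal family with sharp bounds, since the fractional eigenfunctions are not explicit and one must work solely from the asymptotic spectral data of \cite{kwasnicki2012eigenvalues}; it is precisely the borderline convergence of $\sum_n\lambda_n^{-1}$ that pins $s=\tfrac12$ as the sharp threshold, and the full details are carried out in \cite{biccaricontrollability}.
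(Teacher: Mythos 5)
The paper does not actually prove this theorem: it is quoted as a known result, with the proof explicitly attributed to \cite{biccaricontrollability} and the spectral input of \cite{kwasnicki2012eigenvalues}. Your sketch reconstructs essentially that same route --- time reversal, HUM duality, the moment method with a biorthogonal family whose existence rests on the spectral gap and the M\"untz condition $\sum_n\lambda_n^{-1}<\infty$ (equivalent to $s>\tfrac12$ by \eqref{lam}), together with a lower bound on $\|\varphi_n\|_{L^2(\omega)}$ --- so it matches the cited proof's approach; the only point to tighten is the necessity direction, where the non-existence of a biorthogonal family must still be converted into an actual violation of the observability inequality (via M\"untz--Sz\'asz completeness of the exponentials), a step you assert rather than carry out.
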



\subsection{Volterra integral equations}

To conclude this preliminary section, we recall some well-known facts regarding the Volterra integral equations of the first and second kind. The following results, as well as more details on these equations, can be found in \cite{tricomi1985integral}.

\begin{lemma}\label{Lemma1}
For $0<t_1\leq T$, $\sigma\in W^{1,\infty}(0,t_1)$, and any $\eta \in L^2((0,t_1)\times (-1,1))$, a solution $\theta^{(t_1)}\in H^1(0,t_1; L^2(-1,1))$ of the following  Volterra integral equation of the second kind exists:
\begin{align}\label{Volterra-1}
\begin{cases}
\eta(x,t)=\sigma(0)\theta_t^{(t_1)}(x,t)+\displaystyle\int_t^{t_1}\Big(\sigma(s-t)\theta^{(t_1)}(x,s)+\sigma'(s-t)\theta_t^{(t_1)}(x,s)\Big)ds,&\\
\theta^{(t_1)}(x,t_1)=0.
\end{cases}
\end{align}
Besides, there exists a constant $C>0$ depending on $\|\sigma\|_{W^{1,\infty}(0,t_1)}$ such that
\begin{align}\label{Volterra-2}
\|\theta^{(t_1)}\|_{H^1(0,t_1;L^2(-1,1))}\leq C\|\eta\|_{L^2((0,t_1)\times(-1,1))}.
\end{align}
\end{lemma}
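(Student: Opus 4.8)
The plan is to convert the terminal-value problem \eqref{Volterra-1} into a single standard second-kind Volterra equation for the unknown $\psi:=\theta_t^{(t_1)}$, and then invoke the classical resolvent-kernel theory recalled from \cite{tricomi1985integral}. Since $\theta^{(t_1)}(x,t_1)=0$, the terminal condition lets me recover $\theta^{(t_1)}$ from its derivative via $\theta^{(t_1)}(x,t)=-\int_t^{t_1}\psi(x,s)\,ds$, so that $\theta^{(t_1)}$ is entirely determined by $\psi$. Substituting this into the first line of \eqref{Volterra-1} and interchanging the order of integration in the resulting double integral (for fixed $x$ the outer variable $s$ and inner variable $r$ satisfy $t\le s\le r\le t_1$) collapses everything into
\[
\sigma(0)\psi(x,t)+\int_t^{t_1}K(t,r)\,\psi(x,r)\,dr=\eta(x,t),\qquad K(t,r):=\sigma'(r-t)-\int_0^{r-t}\sigma(\tau)\,d\tau .
\]
Here I will use the hypothesis $\sigma(0)\neq 0$, which is precisely what makes \eqref{Volterra-1} an equation of the \emph{second} kind; the regularity $\sigma\in W^{1,\infty}(0,t_1)$ guarantees the bound $|K(t,r)|\le (1+t_1)\|\sigma\|_{W^{1,\infty}(0,t_1)}$, uniformly in $x$.

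Next I would solve this equation for a.e.\ fixed $x$. Dividing by $\sigma(0)$ and writing the integral term as a (backward) Volterra operator $V$ on $L^2(0,t_1)$ with bounded kernel on a finite interval, the operator $V$ is quasi-nilpotent: $\|V^k\|$ decays like $\big(\|K\|_\infty t_1/|\sigma(0)|\big)^k/k!$, so the Neumann series $\sum_{k\ge 0}(-V)^k$ converges and yields the unique solution $\psi=(I+V)^{-1}\big(\eta/\sigma(0)\big)$. The essential point is that the bound on $(I+V)^{-1}$ depends only on $\|K\|_\infty$, $t_1$ and $\sigma(0)$ — hence only on $\|\sigma\|_{W^{1,\infty}(0,t_1)}$ and $t_1$ — and not on the parameter $x$. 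Therefore $\|\psi(x,\cdot)\|_{L^2(0,t_1)}\le C\,\|\eta(x,\cdot)\|_{L^2(0,t_1)}$ for a.e.\ $x$, and squaring and integrating over $(-1,1)$ gives $\|\psi\|_{L^2((0,t_1)\times(-1,1))}\le C\,\|\eta\|_{L^2((0,t_1)\times(-1,1))}$. (Equivalently, one may run the same quasi-nilpotency argument directly for the Volterra operator acting on the Hilbert space $L^2(0,t_1;L^2(-1,1))$, with $x$ absorbed into the range space.)

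Finally I would recover $\theta^{(t_1)}$ and close the estimate. Setting $\theta^{(t_1)}(x,t):=-\int_t^{t_1}\psi(x,s)\,ds$ forces $\theta^{(t_1)}(x,t_1)=0$ and $\theta_t^{(t_1)}=\psi$, so by construction $\theta^{(t_1)}$ solves \eqref{Volterra-1} and belongs to $H^1(0,t_1;L^2(-1,1))$. The Cauchy--Schwarz estimate $|\theta^{(t_1)}(x,t)|\le \sqrt{t_1}\,\|\psi(x,\cdot)\|_{L^2(0,t_1)}$ yields $\|\theta^{(t_1)}\|_{L^2((0,t_1)\times(-1,1))}\le t_1\|\psi\|_{L^2((0,t_1)\times(-1,1))}$, and combining this with $\theta_t^{(t_1)}=\psi$ produces
\[
\|\theta^{(t_1)}\|_{H^1(0,t_1;L^2(-1,1))}\le C\,\|\eta\|_{L^2((0,t_1)\times(-1,1))},
\]
which is exactly \eqref{Volterra-2}.

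I expect the genuinely delicate point to be bookkeeping rather than analysis: performing the interchange of integration cleanly so that the second-kind structure (an isolated $\sigma(0)\psi$ term plus a bounded Volterra kernel) becomes manifest, and verifying that the resolvent bound is uniform in $x$ so that the pointwise-in-$x$ estimates integrate up to the stated $L^2$ bound. The hypothesis $\sigma\in W^{1,\infty}$ is used only to make $K$ bounded, while $\sigma(0)\neq 0$ is what keeps the problem of the second kind; were $\sigma(0)=0$, the reduction would break down and one would instead face a first-kind equation of a more ill-posed nature.
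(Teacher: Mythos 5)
Your proposal is correct, but note that the paper does not actually prove Lemma \ref{Lemma1}: it states it as a known fact and refers to \cite{tricomi1985integral} for the proof and details. Your argument is therefore a self-contained reconstruction of the classical resolvent theory rather than a parallel of an argument written in the paper, and it checks out: with $\psi=\theta_t^{(t_1)}$ and $\theta^{(t_1)}(x,t)=-\int_t^{t_1}\psi(x,s)\,ds$, the Fubini interchange over the region $\{t\le s\le r\le t_1\}$ produces exactly the kernel $K(t,r)=\sigma'(r-t)-\int_0^{r-t}\sigma(\tau)\,d\tau$, bounded by $(1+t_1)\|\sigma\|_{W^{1,\infty}(0,t_1)}$, and the Neumann-series argument (run either for a.e.\ fixed $x$, with joint measurability supplied by the series representation, or directly in the vector-valued space $L^2(0,t_1;L^2(-1,1))$, as you note) yields existence, uniqueness and \eqref{Volterra-2}. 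Two observations are worth recording. First, you correctly isolate the hypothesis $\sigma(0)\neq 0$ as indispensable: it is what makes \eqref{Volterra-1} genuinely of the second kind, and without it the reduction degenerates to a first-kind equation that is not solvable for arbitrary $\eta\in L^2$. This hypothesis is absent from the statement of Lemma \ref{Lemma1}, yet the paper implicitly relies on it --- its numerical treatment of the Volterra equation in Section \ref{sec-5} prescribes the terminal value $\sigma(0)^{-1}$ --- so your proof exposes a looseness in the lemma's formulation rather than a gap in your own reasoning. Second, and as a consequence, the constant you obtain depends on $1/|\sigma(0)|$ (and on $t_1\le T$) in addition to $\|\sigma\|_{W^{1,\infty}(0,t_1)}$; since $1/|\sigma(0)|$ is not controlled by $\|\sigma\|_{W^{1,\infty}(0,t_1)}$, the dependence claimed in \eqref{Volterra-2} is slightly understated and your version is the accurate one. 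A purely cosmetic quibble: for a Volterra operator with bounded kernel the iterated-operator bound is of the form $\|V^k\|\le (Mt_1)^k/(k-1)!$ rather than $(Mt_1)^k/k!$, which changes nothing about the convergence of the Neumann series or the final estimate.
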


\begin{lemma}\label{Lemma2}
Let $\sigma\in W^{1,\infty}(-1,1)$. We define the operator $K: L^2((0,T)\times(-1,1))\to H^1(0,T; L^2(-1,1))$ by
\begin{align}\label{Volterra-3}
(Kv)(x,t):=\int_0^t \sigma(\tau)v(x,t-\tau)d\tau.
\end{align}
Then, there exists a constant $C>0$ depending on $(-1,1), T$, and $\|\sigma\|_{W^{1,\infty}(0,t_1)}$, for $0<t_1\leq T$, such that 
\begin{align}\label{Volterra-4}
C\|Kv\|_{H^1(0,T; L^2(-1,1))}\leq \|v\|_{L^2((0,T)\times(-1,1))}\leq \|Kv\|_{H^1(0,T; L^2(-1,1))}.
\end{align}
Moreover, the adjoint operator $K^{*}: H^1(0,T; L^2(-1,1))\to L^2((0,T)\times(-1,1)) $ is given by 
\begin{align}\label{Volterra-5}
(K^{*}\theta)(x,t)=\sigma(0)\theta_t(x,t)+\int_t^T \Big(\sigma(\tau-t)\theta(x,\tau)+\sigma'(\tau-t)\theta_t(x,\tau)\Big)d\tau.
\end{align}
\end{lemma}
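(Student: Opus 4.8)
The plan is to establish three things: that $K$ maps $L^2((0,T)\times(-1,1))$ continuously into $H^1(0,T;L^2(-1,1))$ (the left inequality in \eqref{Volterra-4}), that $K$ is bounded below (the right inequality), and that its adjoint is given by \eqref{Volterra-5}. The starting point for all three is the change of variables $s=t-\tau$, which rewrites $(Kv)(x,t)=\int_0^t\sigma(t-s)v(x,s)\,ds$, together with the Leibniz rule to compute the time derivative
\begin{align*}
\partial_t(Kv)(x,t)=\sigma(0)v(x,t)+\int_0^t\sigma'(t-s)v(x,s)\,ds.
\end{align*}
Since $\sigma\in W^{1,\infty}$, both $\sigma$ and $\sigma'$ are bounded, so a direct application of the Cauchy--Schwarz inequality (or Young's convolution inequality on the finite interval $(0,T)$) to $Kv$ and to the two terms of $\partial_t(Kv)$ yields $\|Kv\|_{H^1(0,T;L^2)}\le C\|v\|_{L^2}$ with $C$ depending on $T$ and $\|\sigma\|_{W^{1,\infty}}$; this is the left inequality.

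For the lower bound I would read the formula for $\partial_t(Kv)$ as a Volterra integral equation of the second kind for $v$,
\begin{align*}
\sigma(0)v(x,t)=\partial_t(Kv)(x,t)-\int_0^t\sigma'(t-s)v(x,s)\,ds,
\end{align*}
which can be solved for $v$ precisely because $\sigma(0)\ne0$ (as tacitly required already in Lemma~\ref{Lemma1}). Writing $g:=\partial_t(Kv)$, taking $L^2(-1,1)$ norms in $x$, and estimating the convolution term by $\|\sigma'\|_{\infty}\int_0^t\|v(\cdot,s)\|_{L^2}\,ds$, a Gronwall argument yields $\int_0^T\|v(\cdot,t)\|_{L^2}^2\,dt\le C\int_0^T\|g(\cdot,t)\|_{L^2}^2\,dt$, that is $\|v\|_{L^2((0,T)\times(-1,1))}\le C\|\partial_t(Kv)\|_{L^2}\le C\|Kv\|_{H^1}$, with $C$ depending on $T$, $\|\sigma\|_{W^{1,\infty}}$ and $|\sigma(0)|$; this is the right inequality (up to the normalization of the constant stated in the lemma). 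Equivalently, one may invoke the bounded invertibility of second-kind Volterra operators on $L^2(0,T)$ via the Neumann/resolvent series, whose norm depends only on $T$ and $\|\sigma'\|_\infty$.

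The adjoint formula I would verify directly from the definition $\langle Kv,\theta\rangle_{H^1(0,T;L^2)}=\langle v,K^*\theta\rangle_{L^2}$, where the codomain carries the full $H^1$ inner product $\langle f,g\rangle_{H^1}=\int(fg+f_tg_t)$. Expanding the right-hand side of \eqref{Volterra-5} against $v$ and applying Fubini to each double time-integral, the $\sigma(\tau-t)\theta$ term reassembles into $\langle Kv,\theta\rangle_{L^2}$, while the $\sigma'(\tau-t)\theta_t$ term, after recognizing $\int_0^\tau\sigma'(\tau-t)v(x,t)\,dt=\partial_\tau(Kv)-\sigma(0)v$, produces $\langle\partial_t(Kv),\theta_t\rangle_{L^2}-\sigma(0)\langle v,\theta_t\rangle_{L^2}$; the latter cancels exactly the leading $\sigma(0)\langle v,\theta_t\rangle_{L^2}$ term, leaving $\langle Kv,\theta\rangle_{L^2}+\langle\partial_t(Kv),\theta_t\rangle_{L^2}=\langle Kv,\theta\rangle_{H^1}$, as required.

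I expect the main obstacle to be the lower bound: one must ensure the second-kind Volterra inversion is legitimate (which hinges on $\sigma(0)\ne0$) and carry the Gronwall estimate uniformly enough in the spatial variable that the bound survives integration over $x$. The adjoint computation, though lengthy, is mechanical bookkeeping of Fubini swaps combined with the identity for $\partial_t(Kv)$, and the continuity bound is entirely routine.
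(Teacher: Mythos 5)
Your proof is correct, but it cannot be compared step\,-by-step with the paper's, because the paper offers no proof at all: Lemma \ref{Lemma2} is stated as a known fact about Volterra equations, with a pointer to Tricomi's book \cite{tricomi1985integral}. Your argument --- the Leibniz identity $\partial_t(Kv)=\sigma(0)v+\int_0^t\sigma'(t-s)v(\cdot,s)\,ds$, Cauchy--Schwarz/Young for the continuity bound, inversion of the resulting second-kind Volterra equation (Gronwall or Neumann series) for the lower bound, and Fubini plus the same Leibniz identity for the adjoint --- is the standard self-contained route, and your bookkeeping in the adjoint computation (the exact cancellation of the two $\sigma(0)\langle v,\theta_t\rangle_{L^2}$ terms) is right. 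Beyond self-containedness, your proof surfaces two points the paper's statement glosses over: (i) the lower bound genuinely requires $\sigma(0)\neq 0$, a hypothesis the paper never writes but uses tacitly here and in Lemma \ref{Lemma1}; indeed for $\sigma(t)=t$ and oscillatory data $v_n(x,t)=\sin(nt)$ one has $\|Kv_n\|_{H^1(0,T;L^2(-1,1))}\to 0$ while $\|v_n\|_{L^2((0,T)\times(-1,1))}$ stays bounded away from zero, so the estimate fails; and (ii) the right-hand inequality in \eqref{Volterra-4} cannot hold with constant $1$ as written, since replacing $\sigma$ by $\epsilon\sigma$ scales $\|Kv\|_{H^1}$ by $\epsilon$ without changing $\|v\|_{L^2}$; it must be read with a constant depending on $\sigma$ and $T$, exactly as you normalize it. These are defects of the statement, not of your argument.
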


Finally, let us consider the following nonlocal heat equation
\begin{align}\label{HE1}
\begin{cases}
\partial_t w + (-\partial_x^2)^{s} w = 0 & \mbox{ in }\; (-1,1)\times(0,T),\\
w=0 &\mbox{ in }\; (\Omc)\times (0,T), \\
w(\cdot,0) = f(x)&\mbox{ in }\; (-1,1).
\end{cases}
\end{align}
Using classical Duhamel's principle, but in the context of the fractional Laplacian, we obtain the following relation for the solution of \eqref{IPHE}, with $F(x,t)=f(x)\sigma(t)$, and \eqref{HE1}.

\begin{lemma}\label{Lemma3}
Let $f\in L^2(-1,1)$, $\sigma\in W^{1,\infty}(0,T)$, and $u,w$ be the unique weak solutions of \eqref{IPHE}, with $F(x,t)=f(x)\sigma(t)$, and \eqref{HE1}, respectively. Then, the solution $u$ satisfies
\begin{align}\label{Volterra-6}
u(x,t)=(Kw)(x,t),
\end{align}
where the operator $K:L^2((0,T)\times(-1,1))\to H^1(0,T; L^2(-1,1))$ is given by \eqref{Volterra-3}.
\end{lemma}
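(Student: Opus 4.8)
The plan is to prove the identity $u=Kw$ by comparing the spectral (series) representations of $u$ and $w$, since Duhamel's principle for the fractional semigroup is already encoded in the formula \eqref{series1}. First I would apply \eqref{series1} to the source problem \eqref{IPHE}: because the initial datum vanishes, the first series drops out and, writing $f_k:=(f,\varphi_k)_{L^2(-1,1)}$,
\[
u(x,t)=\sum_{k\ge1}f_k\Big(\int_0^t e^{-\lambda_k(t-s)}\sigma(s)\,ds\Big)\varphi_k(x).
\]
Next I would apply the same formula to the homogeneous problem \eqref{HE1}, which carries initial datum $f$ and no source term; here the second series is absent, so
\[
w(x,t)=\sum_{k\ge1}f_k\,e^{-\lambda_k t}\varphi_k(x),
\]
the series converging in $L^2(-1,1)$ for every $t\in[0,T]$ by orthonormality of $\{\varphi_k\}$.

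With these at hand I would compute $Kw$ directly from the definition \eqref{Volterra-3}. Substituting the series for $w$ and interchanging the summation with the time integral gives
\[
(Kw)(x,t)=\int_0^t\sigma(\tau)\,w(x,t-\tau)\,d\tau=\sum_{k\ge1}f_k\varphi_k(x)\int_0^t \sigma(\tau)\,e^{-\lambda_k(t-\tau)}\,d\tau.
\]
The key observation is that, after relabelling the integration variable $\tau\mapsto s$, the coefficient integral $\int_0^t\sigma(\tau)e^{-\lambda_k(t-\tau)}\,d\tau$ is exactly the convolution $\int_0^t e^{-\lambda_k(t-s)}\sigma(s)\,ds$ appearing in the representation of $u$. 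Matching the two expansions termwise then yields $u=Kw$; no change of variables beyond this relabelling is needed, since the convolution is symmetric in the way $K$ is written.

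The only genuine technical point, and the step I would treat most carefully, is the justification of the interchange of the infinite sum and the integral — this is where I expect the main (though modest) obstacle to lie. I would argue via uniform $L^2$-convergence of the partial sums $w_N(x,s):=\sum_{k=1}^N f_k e^{-\lambda_k s}\varphi_k(x)$: since $0<\lambda_1\le\lambda_2\le\cdots$, the bound $e^{-2\lambda_k s}\le 1$ together with orthonormality gives
\[
\sup_{s\in[0,T]}\|w(\cdot,s)-w_N(\cdot,s)\|_{L^2(-1,1)}^2\le\sum_{k>N}f_k^2\longrightarrow 0,
\]
because $f\in L^2(-1,1)$ forces $\{f_k\}\in\ell^2$. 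As $\sigma\in W^{1,\infty}(0,T)\subset L^\infty(0,T)$ and the interval is finite, this uniform convergence transfers to $\|Kw-Kw_N\|_{L^2}\le\|\sigma\|_{L^\infty}\,t\,\sup_s\|w-w_N\|_{L^2}\to0$, so one may pass to the limit under the integral and recover the termwise identity. As an alternative route, one could instead verify that $Kw$ satisfies the weak formulation of Definition \ref{D1} for the source $f(x)\sigma(t)$ — differentiating the convolution in $t$ and using $w_t+(-\partial_x^2)^s w=0$ — and then conclude $u=Kw$ from the uniqueness asserted in Remark \ref{r1}; the spectral route is the more direct one given \eqref{series1}.
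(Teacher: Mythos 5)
Your proof is correct and follows essentially the same route as the paper: the paper justifies the lemma by appealing to Duhamel's principle, whose spectral form is exactly the series \eqref{series1}, and your termwise computation of $Kw$ in the eigenbasis (with the sum--integral interchange justified by uniform $L^2$-convergence of the partial sums) is the rigorous implementation of that argument. Nothing further is needed.
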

Let us remark that this result is reasonable owing to the regularity of the finite-energy solution of the fractional heat equation. Without extra regularity, that is, $u_t\in L^2((0,T)\times(-1,1))$, we do not address the methodology used in this study.

\section{Main results}\label{sec-3}
In this section, we state and prove the main results of the reconstruction formula for an unknown source term. Our methodology is strongly based on the null controllability of the nonlocal heat equation, the spectral properties of the fractional Laplacian operator, and the Volterra integral equations introduced in the previous section.

Our first main result establishes a reconstruction formula for the source term $f$ from the local observations of state $u$ and its time derivative $u_t$ in subdomain $\omega\times (0,T)$. The formula is given in terms of the Fourier coefficients $f_n=(f,\varphi_n)_{L^2(-1,1)}$, where $\{\varphi_n\}_{n\in\N}$ is the orthonormal basis of the eigenfunctions associated with the eigenvalues $\{\lambda_n\}_{n\in\NN}$ of the fractional Laplace operator.

\begin{theorem}\label{th1}
Let $\sigma\in W^{1,\infty}(0,T)$ with $\sigma(T)\neq 0$, $f\in L^2(-1,1)$, $s\in (1/2,1)$, and let $\{\lambda_n,\varphi_n\}_{n\in\N}$ be the eigenvalues and orthonormal eigenvectors of the fractional Laplace operator in $(-1,1)$ with homogeneous Dirichlet exterior conditions. Given $n\in\N$, for each $0<\tau\leq T$, let $h^{(\tau)}$ be a null control associated with problem \eqref{null-control} with initial data $\varphi_n$, extended by zero in $(\tau,T]$. Let $\theta^{(\tau)}$ be the solution of the Volterra integral equation \eqref{Volterra-1} for $\eta=h^{(\tau)}$ also extended by zero in $(\tau,T]$. Then, the following reconstruction formula holds
\begin{multline}\label{th1.1}
( f,\varphi_n)_{L^2(-1,1)}=
\sigma(T)^{-1}\left(-\sigma(0)\langle u,\theta^{(T)}\rangle_{H^1(0,T;L^2(\omega))}
-\int_0^T\sigma'(T-\tau)\langle u,\theta^{(\tau)}\rangle_{H^1(0,T;L^2(\omega))}d \tau\right.\\
\left.-\lambda_n\int_0^T\sigma(T-\tau)\langle u,\theta^{(\tau)}\rangle_{H^1(0,T;L^2(\omega))}d \tau\right).
\end{multline} 
\end{theorem}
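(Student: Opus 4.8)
The plan is to reduce the recovery of $f_n=(f,\varphi_n)_{L^2(-1,1)}$ to a single \emph{master identity} expressing the Fourier coefficients of the \emph{free} solution in terms of the available observations, and then to feed this into the spectral form of the equation at the terminal time. Let $w$ be the solution of \eqref{HE1} with initial datum $f$, and set $w_n(t):=(w(\cdot,t),\varphi_n)_{L^2(-1,1)}$ and $u_n(t):=(u(\cdot,t),\varphi_n)_{L^2(-1,1)}$. Note that the regularity in \eqref{regularity} gives $u_t\in L^2((-1,1)\times(0,T))$, so the $H^1(0,T;L^2(\omega))$ pairings below are meaningful, and the scalar coefficient $u_n$ will turn out to be $C^1([0,T])$ (see Step 2), which legitimizes evaluating its derivative at $T$.

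\textbf{Step 1 (master identity).} First I would establish that, for every $\tau\in(0,T]$,
\[
w_n(\tau)=-\langle u,\theta^{(\tau)}\rangle_{H^1(0,T;L^2(\omega))}.
\]
To get this, pair $w$ with the controlled solution $\phi$ of \eqref{null-control} having datum $\varphi_0=\varphi_n$ prescribed at time $\tau$, and differentiate $t\mapsto(w(\cdot,t),\phi(\cdot,t))_{L^2(-1,1)}$. Inserting the two equations and using the self-adjointness of $(-\partial_x^2)_D^s$ (equivalently the symmetry of $\mathcal F$ via \eqref{Int-Part}, the exterior conditions $w=\phi=0$ in $\Omc$ annihilating the $\mathcal N_s$ boundary terms), the nonlocal diffusion contributions cancel and only the control term $-(w,h^{(\tau)}\chi_\omega)$ survives; integrating over $(0,\tau)$ and using $\phi(\cdot,0)=0$, $\phi(\cdot,\tau)=\varphi_n$, $w(\cdot,0)=f$ yields $w_n(\tau)=-(w,h^{(\tau)})_{L^2((0,\tau)\times\omega)}$. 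Next I would bring in the Volterra structure: by Lemma \ref{Lemma1} with $\eta=h^{(\tau)}$ and $t_1=\tau$, the function $\theta^{(\tau)}$ satisfies $h^{(\tau)}=K^{*}\theta^{(\tau)}$ with $K^{*}$ as in \eqref{Volterra-5}; since $\theta^{(\tau)}(\cdot,\tau)=0$, its extension by zero remains in $H^1$ and the relation $K^{*}\theta^{(\tau)}=h^{(\tau)}$ persists on all of $(0,T)$. As $K$ acts only through a time convolution it commutes with restriction to $\omega$, so Lemma \ref{Lemma2} and Lemma \ref{Lemma3} ($u=Kw$) give $(w,h^{(\tau)})_{L^2((0,T)\times\omega)}=(w,K^{*}\theta^{(\tau)})_{L^2((0,T)\times\omega)}=(Kw,\theta^{(\tau)})_{H^1(0,T;L^2(\omega))}=\langle u,\theta^{(\tau)}\rangle_{H^1(0,T;L^2(\omega))}$, which is the claimed identity.

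\textbf{Step 2 (terminal spectral relation, Duhamel, assembly).} Pairing \eqref{IPHE} with $\varphi_n$ and using $(-\partial_x^2)^s\varphi_n=\lambda_n\varphi_n$ gives $u_n'(t)+\lambda_n u_n(t)=f_n\sigma(t)$; since $u_n,\sigma\in C([0,T])$ this shows $u_n\in C^1([0,T])$, and evaluating at $t=T$ with $\sigma(T)\neq0$ yields $f_n=\sigma(T)^{-1}\big(u_n'(T)+\lambda_n u_n(T)\big)$. On the other hand, reading $u=Kw$ (Lemma \ref{Lemma3}) coefficientwise through \eqref{Volterra-3} gives $u_n(t)=\int_0^t\sigma(t-r)w_n(r)\,dr$, hence $u_n'(t)=\sigma(0)w_n(t)+\int_0^t\sigma'(t-r)w_n(r)\,dr$. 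Substituting these two expressions (at $t=T$) into $f_n=\sigma(T)^{-1}(u_n'(T)+\lambda_n u_n(T))$, and then replacing each $w_n(r)$ by $-\langle u,\theta^{(r)}\rangle_{H^1(0,T;L^2(\omega))}$ from Step 1, produces exactly \eqref{th1.1} after renaming $r\to\tau$.

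\textbf{Main obstacle.} The delicate part is the rigorous justification of the duality computation in Step 1: the finite-energy solutions $w$ and $\phi$ a priori live only in $L^2(0,T;H_0^s(-1,1))$ with time derivatives in $H^{-s}(-1,1)$, so $\frac{d}{dt}(w,\phi)_{L^2}$ and the cancellation of the nonlocal diffusion must be argued through the weak formulation of Definition \ref{D1} (or a density argument with smoother data), checking that the exterior conditions make the $\mathcal N_s$-terms in \eqref{Int-Part} vanish. A second technical point is the measurability and integrability in $\tau$ of $\theta^{(\tau)}$, hence of $\tau\mapsto\langle u,\theta^{(\tau)}\rangle$, which is needed for the integrals $\int_0^T(\cdots)\,d\tau$ in \eqref{th1.1} to be well defined; this should follow from the uniform control bound \eqref{obs-ine} combined with the Volterra estimate \eqref{Volterra-2}. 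By contrast, the algebraic manipulations in Step 2 are routine.
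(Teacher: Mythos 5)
Your proposal is correct and follows essentially the same route as the paper: both rest on the Duhamel relation $u=Kw$ (Lemma \ref{Lemma3}), the terminal-time spectral identity $\sigma(0)w_n(T)+\int_0^T\sigma'(T-\tau)w_n(\tau)\,d\tau+\lambda_n\int_0^T\sigma(T-\tau)w_n(\tau)\,d\tau=\sigma(T)f_n$, and the control--Volterra duality $w_n(\tau)=-(w,h^{(\tau)})_{L^2((0,\tau)\times\omega)}=-\langle u,\theta^{(\tau)}\rangle_{H^1(0,T;L^2(\omega))}$ obtained from the null control with datum $\varphi_n$ and the factorization $h^{(\tau)}=K^{*}\theta^{(\tau)}$. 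The only differences are cosmetic (you project the equation onto $\varphi_n$ before evaluating at $t=T$, whereas the paper evaluates first and then projects, using \eqref{Int-Part} in place of your scalar ODE), so this matches the paper's proof.
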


\begin{proof}
From Lemmas \ref{Lemma2} and \ref{Lemma3},  we know that the unique finite-energy solution of \eqref{IPHE} is related to the unique weak solution of
\begin{align}\label{ec2}
\begin{cases}
\partial_t w + (-\partial_x^2)^{s} w = 0 & \mbox{ in }\; (-1,1)\times(0,T),\\
w=0 &\mbox{ in }\; (\Omc)\times (0,T), \\
w(\cdot,0) = f(x)&\mbox{ in }\; (-1,1),
\end{cases}
\end{align}
through the operator $K:L^2((0,T)\times(-1,1))\to H^1(0,T; L^2(-1,1))$
\begin{align}\label{ec1}
u(x,t)=(Kw)(x,t)=\int_0^t \sigma(t-\tau)w(x,\tau)d\tau.
\end{align}

Taking time derivative to \eqref{ec1}, we obtain
\begin{align}\label{ec3}
u_t(x,t)=\sigma(0)w(x,t)+\int_0^t \sigma'(t-\tau)w(x,\tau)d\tau.
\end{align}
Because $u$ is the unique weak solution of \eqref{HE-EX}, by replacing \eqref{ec3} in \eqref{HE-EX}, we obtain
\begin{align}\label{ec4}
\begin{cases}
 \sigma(0)w(x,t)+\displaystyle\int_0^t \sigma'(t-\tau)w(x,\tau)d\tau+ (-\partial_x^2)^{s} u = f(x)\sigma(t) & \mbox{ in }\; (-1,1)\times(0,T),\\
u=0 &\mbox{ in }\; (\Omc)\times (0,T), \\
u(\cdot,0) = 0&\mbox{ in }\; (-1,1).
\end{cases}
\end{align}

Therefore, by evaluating in $t=T$ the first equation in \eqref{ec4}, multiplying by $\varphi_k$, with $k\in\NN$ and integrating over $(-1,1)$, we deduce 
\begin{multline}\label{ec5}
\sigma(0)(w(\cdot,T),\varphi_k)_{L^2(-1,1)}+\int_0^T\sigma'(T-\tau)(w(\cdot,\tau),\varphi_k)_{L^2(-1,1)}d\tau\\ + ((-\partial_x^2)^s u(\cdot,T),\varphi_k)_{L^2(-1,1)}=\sigma(T)(f,\varphi_k)_{L^2(-1,1)}.
\end{multline}
Then, it follows that the Fourier coefficient of $f$ is given by
\begin{multline}\label{ec6}
(f,\varphi_k)_{L^2(-1,1)}=\frac{\sigma(0)}{\sigma(T)}(w(\cdot,T),\varphi_k)_{L^2(-1,1)}+\frac{1}{\sigma(T)}\int_0^T \sigma'(T-\tau)(w(\cdot,\tau),\varphi_k)_{L^2(-1,1)}d\tau\\+\frac{1}{\sigma(T)}((-\partial_x^2)^s u(\cdot,T),\varphi_k)_{L^2(-1,1)}.
\end{multline}

We observe that by using the integration by parts formula \eqref{Int-Part}, we obtain:
\begin{align*}
\frac{1}{\sigma(T)}((-\partial_x^2)^s u(\cdot,T),\varphi_k)_{L^2(-1,1)}=\frac{1}{\sigma(T)}(u(\cdot,T),(-\partial_x^2)^s\varphi_k)_{L^2(-1,1)}.
\end{align*}
Because $\{\varphi_k\}_{k\in\NN}$ satisfies the eigenvalue problem \eqref{ei-val-pro} and from the definition of operator $K$, we obtain
\begin{align}\label{ec7}
\frac{1}{\sigma(T)}((-\partial_x^2)^s u(\cdot,T),\varphi_k)_{L^2(-1,1)}=\frac{\lambda_k}{\sigma(T)}\int_0^T\sigma(T-\tau)(w(\cdot,\tau),\varphi_k)_{L^2(-1,1)}d\tau.
\end{align}

Next, we must estimate the terms $(w(\cdot,\tau),\varphi_k)_{L^2(-1,1)}$ for every $k\in\NN$ and $\tau\in(0,T)$. Let $\tau\in (0,T)$ and, for each $k\in\NN$, let $\phi$ be the unique solution of \eqref{null-control} over $(-1,1)\times(0,\tau)$ with initial datum $\varphi_k$ and control $h^{(\tau)} $, where both $\phi$ and $h^{(\tau)}$ are zero-extended in $(\tau,T]$. Using $\phi$ as a test function in the definition of a weak solution to the problem \eqref{ec2}, we have
\begin{align}\label{ec8}
\int_0^{\tau}\int_{-1}^1 w_t \phi dxdt +\frac{C_s}{2}\int_0^{\tau}\int_{-1}^1\int_{-1}^1 \frac{(w(x)-w(y))(\phi(x)-\phi(y))}{|x-y|^{1+2s}}dxdydt=0.
\end{align}
Integrating by parts, we obtain
\begin{multline}\label{ec9}
-\int_0^{\tau}\int_{-1}^1 w\phi_t dxdt +\int_{-1}^1\Big(w(\cdot,\tau)\phi(\cdot,\tau)-w(\cdot,0)\phi(\cdot,0)\Big)dx \\+\frac{C_s}{2}\int_0^{\tau}\int_{-1}^1\int_{-1}^1 \frac{(w(x)-w(y))(\phi(x)-\phi(y))}{|x-y|^{1+2s}}dxdydt =0.
\end{multline}
Because $\phi$ is the solution of \eqref{null-control} and satisfies $\phi(\cdot,0)=0$ a.e. in $(-1,1)$, we have:
\begin{align}\label{ec10}
-\int_{\mathcal{\omega}}\int_0^{\tau}h^{(\tau)}(x,t)w(x,t)dtdx = (w(\cdot,\tau),\phi(\cdot,\tau))_{L^2(-1,1)}=(w(\cdot,\tau),\varphi_k)_{L^2(-1,1)}.
\end{align}

Now, from Lemma \ref{Lemma2} there exists $\theta^{(\tau)}\in H^1(0,T;L^2(-1,1))$, with $\theta^{\tau}(\cdot,t)=0$ for $t\in[\tau,T]$, such that
\begin{align}\label{ec11}
h^{(\tau)}=K^{*}\theta^{(\tau)}.
\end{align}
Therefore, 
\begin{align}\label{ec12}
(w(\cdot,\tau),\varphi_k)_{L^2(-1,1)}&=-(w,h^{(\tau)})_{L^2(0,T;L^2(\omega))}\notag\\
&= -(w,K^{*}\theta^{(\tau)})_{L^2(0,T;L^2(\omega))}\notag\\
&=-\langle Kw,\theta^{(\tau)}\rangle_{H^1(0,T;L^2(\omega))}\notag\\
&=-\langle u,\theta^{(\tau)}\rangle_{H^1(0,T;L^2(\omega))}.
\end{align}
Finally, from \eqref{ec6}, \eqref{ec7}, and \eqref{ec12}, the unknown Fourier coefficient of $f$ is given by:
\begin{multline*}
(f,\varphi_k)_{L^2(-1,1)}=-\frac{\sigma(0)}{\sigma(T)} \langle u,\theta^{T}\rangle_{H^1(0,T;L^2(\omega))}-\frac{1}{\sigma(T)}\int_0^T \sigma'(T-\tau)\langle u,\theta^{T}\rangle_{H^1(0,T;L^2(\omega))}d\tau\\-\frac{\lambda_k}{\sigma(T)}\int_0^T \sigma(T-\tau)\langle u,\theta^{T}\rangle_{H^1(0,T;L^2(\omega))}d\tau,
\end{multline*}
and the proof was complete.
\end{proof}

Using the series representation of the solution for the nonlocal heat equation given in \eqref{series1}, the previous reconstruction formula can be reformulated as follows.

\begin{theorem}\label{th2}
Under the same assumptions as those in Theorem \ref{th1}. If, for each $n\in\N$ we have:
\begin{align}\label{th2.1}
c_n:=\sigma(T)-\lambda_n\int_0^Te^{\lambda_n(s-T)}\sigma(s)d s\neq0,
\end{align}
then the following reconstruction formula holds:
\begin{equation}\label{th2.2}
\langle f,\varphi_n\rangle_{L^2(-1,1)}=
c_n^{-1}\left(-\sigma(0)\langle u,\theta^{(T)}\rangle_{H^1(0,T;L^2(\omega))}
-\int_0^T\sigma'(T-\tau)\langle u,\theta^{(\tau)}\rangle_{H^1(0,T;L^2(\omega))}d \tau\right).
\end{equation}
\end{theorem}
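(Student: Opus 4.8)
The plan is to start from the reconstruction formula \eqref{th1.1} of Theorem \ref{th1} and to eliminate the term carrying the factor $\lambda_n$ by recognizing that, once the spectral form of $w$ is inserted, this term is itself proportional to the unknown coefficient $(f,\varphi_n)_{L^2(-1,1)}$. The crucial input is identity \eqref{ec12} from the proof of Theorem \ref{th1}, namely $\langle u,\theta^{(\tau)}\rangle_{H^1(0,T;L^2(\omega))}=-(w(\cdot,\tau),\varphi_n)_{L^2(-1,1)}$, together with the fact that $w$ solves the homogeneous equation \eqref{ec2} with initial datum $f$.

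First I would write down the semigroup representation of $w$. Since $(-\partial_x^2)_D^s$ is self-adjoint with orthonormal eigenbasis $\{\varphi_k\}$ and $w(\cdot,0)=f$, the solution of \eqref{ec2} is $w(\cdot,t)=\sum_{k\ge1}(f,\varphi_k)_{L^2(-1,1)}e^{-\lambda_k t}\varphi_k$, so that taking the inner product with $\varphi_n$ gives $(w(\cdot,\tau),\varphi_n)_{L^2(-1,1)}=(f,\varphi_n)_{L^2(-1,1)}e^{-\lambda_n\tau}$. Combined with \eqref{ec12}, this yields $\langle u,\theta^{(\tau)}\rangle_{H^1(0,T;L^2(\omega))}=-(f,\varphi_n)_{L^2(-1,1)}e^{-\lambda_n\tau}$. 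Term-by-term evaluation of the inner product against the series is justified by $L^2$-convergence of the semigroup expansion and orthonormality of $\{\varphi_k\}$.

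Second, I would multiply \eqref{th1.1} through by $\sigma(T)$ and substitute this last identity \emph{only} into the $\lambda_n$-integral term, leaving the $\sigma(0)$ and $\sigma'$ terms untouched as genuine observation pairings. A change of variables $s=T-\tau$ turns $\lambda_n\int_0^T\sigma(T-\tau)e^{-\lambda_n\tau}\,d\tau$ into $\lambda_n\int_0^T e^{\lambda_n(s-T)}\sigma(s)\,ds$, so the $\lambda_n$-term becomes $\big(\lambda_n\int_0^T e^{\lambda_n(s-T)}\sigma(s)\,ds\big)(f,\varphi_n)_{L^2(-1,1)}$. Moving it to the left-hand side and combining it with $\sigma(T)(f,\varphi_n)_{L^2(-1,1)}$ factors out exactly the coefficient $c_n=\sigma(T)-\lambda_n\int_0^T e^{\lambda_n(s-T)}\sigma(s)\,ds$ of \eqref{th2.1}. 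Dividing by $c_n$, which is permissible precisely under hypothesis \eqref{th2.1}, produces \eqref{th2.2}.

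The computation is elementary once the spectral representation of $w$ is in place, so there is no serious analytic obstacle; the point to get right is the \emph{selective} substitution. If instead one substituted $\langle u,\theta^{(\tau)}\rangle=-(f,\varphi_n)_{L^2(-1,1)}e^{-\lambda_n\tau}$ into every term of \eqref{th1.1}, an integration by parts in $\tau$ would collapse the right-hand side back to $\sigma(T)(f,\varphi_n)_{L^2(-1,1)}$ and yield only the tautology $(f,\varphi_n)_{L^2(-1,1)}=(f,\varphi_n)_{L^2(-1,1)}$. The content of the theorem is therefore that inserting the spectral identity solely in the $\lambda_n$-term absorbs the $\lambda_n$-weighted integral of the data into the explicitly known scalar $c_n$, leaving a reconstruction formula in which only two observation integrals survive, which is advantageous numerically since the growing factor $\lambda_n$ no longer multiplies a noisy data pairing.
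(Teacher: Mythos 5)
Your proof is correct and follows essentially the same route as the paper: both arguments recognize that the $\lambda_n$-weighted term in \eqref{th1.1} is itself proportional to $(f,\varphi_n)_{L^2(-1,1)}$, move it to the left-hand side to form the factor $c_n$, and divide under hypothesis \eqref{th2.1}. The only cosmetic difference is that the paper evaluates this term from the spectral series of $u$, writing $((-\partial_x^2)^s u(\cdot,T),\varphi_n)_{L^2(-1,1)}=\lambda_n f_n\int_0^T e^{-\lambda_n(T-s)}\sigma(s)\,ds$, whereas you evaluate it from the series of the homogeneous solution $w$ combined with \eqref{ec12}; the two computations coincide via $u=Kw$ and the change of variables $s=T-\tau$.
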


\begin{proof}
From Remark \ref{r1}--(d), the solution of \eqref{HE-EX} is given by
\begin{align}\label{ec13}
u(x,t) = \sum_{k\geq 1}\left( f_{k} \int_0^t e^{-\lambda_k(t-s)}\sigma(s)ds \right)\varphi_k(x).
\end{align}
Then, the last term on the right-hand side of \eqref{ec6} can be replaced by
\begin{align}\label{ec14}
((-\partial_x^2)^s u(\cdot,T),\varphi_k)_{L^2(-1,1)}&=\lambda_k(u(\cdot,T),\varphi_k)_{L^2(-1,1)}\notag\\
&= \lambda_k \Big(\sum_{n\geq 1}\left( f_{n} \int_0^T e^{-\lambda_n(T-s)}\sigma(s)ds \right)\varphi_n(x),\varphi_k\Big)_{L^2(-1,1)}\notag\\
&=\lambda_k f_k \int_0^T e^{-\lambda_k(T-s)}\sigma(s)ds.
\end{align}
Therefore, from \eqref{th1.1} the following identity is obtained
\begin{multline*}
\sigma(T)( f,\varphi_n)_{L^2(-1,1)}-\lambda_n ( f,\varphi_n)_{L^2(-1,1)}\int_0^T e^{-\lambda_k(T-s)}\sigma(s)ds=
-\sigma(0)\langle u,\theta^{(T)}\rangle_{H^1(0,T;L^2(\omega))}\\
-\int_0^T\sigma'(T-\tau)\langle u,\theta^{(\tau)}\rangle_{H^1(0,T;L^2(\omega))}d \tau.
\end{multline*} 
Thus, since $c_n\neq 0$, that is,
\begin{align*}
\sigma(T)-\lambda_n \int_0^T e^{-\lambda_k(T-s)}\sigma(s)ds\neq 0,
\end{align*}
we conclude the proof of the Theorem.
\end{proof}


\section{Optimal control problem: penalized HUM approach}\label{sec-4}

This section presents the formulation of an optimal control problem, which serves as the basis for designing a numerical algorithm to implement the reconstruction formula derived in the previous section. The reconstruction procedure relies on several sub-problems: solving the eigenvalue problem for the fractional Laplacian, computing a family of null controls, and solving the associated Volterra integral equation for each case. In this section, we focus on the family of control problems.

It is well known (see, for instance, \cite{glowinski2008exact, boyer2013}) that null controls can be computed using the penalized Hilbert Uniqueness Methodology (HUM). This approach formulates the control problem as a convex quadratic penalized optimization problem designed to characterize and compute the control of the minimal $L^2$-norm. 
	
We denote by $W(0,\tau)$ the space of all $y\in L^2(0,\tau; H_0^s(-1,1))$ with $y_t\in L^2(0,\tau;H^{-s}((-1,1)))$.
Consider the following optimal control problem
\begin{align}\label{funct}
\min_{h\in L^2(0,\tau;L^2(\omega))} \left(J_{\varepsilon}(h):=\frac{1}{2}\int_0^{\tau}\int_{\omega}|h|^2dxdt+\frac{\varepsilon}{2}\|\phi(\cdot,0)\|_{L^2(\Omega)}^2\right),
\end{align}
subject to
\begin{align}\label{control}
\begin{cases}
-\partial_t \phi + (-\partial_x^2)^{s} \phi = h^{(\tau)}\chi_{\omega\times(0,\tau)} & \mbox{ in }\; (-1,1)\times(0,\tau),\\
\phi=0 &\mbox{ in }\; (\Omc)\times (0,\tau), \\
\phi(\cdot,\tau) = \varphi_0&\mbox{ in }\; (-1,1),
\end{cases}
\end{align}
where $\varphi_0\in L^2(-1,1)$ is fixed.

The solution mapping $(h^{\tau},\varphi_0)\mapsto \phi$ corresponding to the initial--boundary value problem \eqref{control} has the following structure
\begin{align*}
y=Gh+G_0\varphi_0,
\end{align*}
where the continuous linear operators $G:L^2(0,\tau;L^2(-1,1))\to W(0,\tau)$ and $G_0:L^2(-1,1)\to W(0,\tau)$ are defined, respectively, by
\begin{align*}
G: h\mapsto y\quad \text{for }\varphi_0=0,\\
G_0: \varphi_0\mapsto y\quad \text{for }h=0.
\end{align*}

Because only the final state $\phi(\cdot,0)$ appears in the cost functional, we proceed as in \cite[Chapter III]{troltzsch2010optimal} and define the observation operator $E_0: \phi\mapsto \phi(\cdot,0)$. For $\varphi_0=0$, this mapping is linear and continuous from $W(0,\tau)$ in $L^2(-1,1)$. Thus, we have
\begin{align}
\phi(\cdot,0)=E_0(Gh)=:Sh.
\end{align}
In other words, $S$ represents the part of the state that appears in the functional. Therefore, we can rewrite the optimal control problem \eqref{funct}-\eqref{control} as follows: Let $\varphi_0\in L^2(-1,1)$ be fixed. We then consider the following optimal control problem:
\begin{align}\label{funct-2}
\min_{h\in L^2(0,\tau;L^2(\omega))} \left(J_{\varepsilon}(h):=\frac{1}{2}\int_0^{\tau}\int_{\omega}|h|^2dxdt+\frac{\varepsilon}{2}\|Sh-z\|_{L^2(\Omega)}^2\right),
\end{align}
where $z=-G_0\varphi_0$

Hence, we can deduce from \cite[Theorem 2.14]{troltzsch2010optimal} the following existence result.
\begin{theorem}\label{th3}
Let $\varepsilon>0$. Then there exists a unique solution $\overline{h}_{\varepsilon}^{(\tau)}$ to \eqref{funct-2}, and hence the optimal nonstationary problem \eqref{funct}-\eqref{control}. 
\end{theorem}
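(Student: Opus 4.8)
The plan is to recognize \eqref{funct-2} as the minimization of a continuous, strictly convex, coercive quadratic functional on the Hilbert space $\mathcal H := L^2(0,\tau;L^2(\omega))$, and then invoke the direct method of the calculus of variations, which is precisely the content of \cite[Theorem 2.14]{troltzsch2010optimal}. Before doing so, the essential structural input I would isolate is that the reduced observation operator $S = E_0 G$ is linear and continuous from $\mathcal H$ into $L^2(-1,1)$; this is already guaranteed by the continuity of $G:L^2(0,\tau;L^2(-1,1))\to W(0,\tau)$ and of $E_0:W(0,\tau)\to L^2(-1,1)$ recorded above, and it rests on the well-posedness of \eqref{control}. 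With $z=-G_0\varphi_0$ fixed, the map $h\mapsto Sh-z$ is affine and continuous.

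First I would verify the three structural properties. Coercivity is immediate from the penalization-free term, since $J_\varepsilon(h)\ge \tfrac12\|h\|_{\mathcal H}^2$ while the second summand is nonnegative; hence $J_\varepsilon(h)\to+\infty$ as $\|h\|_{\mathcal H}\to\infty$. Convexity follows because $h\mapsto \tfrac12\|h\|_{\mathcal H}^2$ is strictly convex, whereas $h\mapsto \tfrac{\varepsilon}{2}\|Sh-z\|_{L^2}^2$ is convex as the composition of an affine map with a convex quadratic; the sum is therefore strictly convex. Continuity of $J_\varepsilon$ on $\mathcal H$ is clear from the continuity of $S$ and of the two norms involved.

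Then I would run the direct method. Since $J_\varepsilon\ge0$, the infimum $m:=\inf_{h}J_\varepsilon(h)$ is finite; pick a minimizing sequence $(h_k)$. By coercivity $(h_k)$ is bounded in $\mathcal H$, so along a subsequence $h_k\rightharpoonup \overline h_\varepsilon^{(\tau)}$ weakly. A continuous convex functional is weakly lower semicontinuous, whence $J_\varepsilon(\overline h_\varepsilon^{(\tau)})\le\liminf_k J_\varepsilon(h_k)=m$, so $\overline h_\varepsilon^{(\tau)}$ is a minimizer. Uniqueness follows from strict convexity: if two distinct minimizers $h_1\ne h_2$ existed, evaluating $J_\varepsilon$ at $\tfrac12(h_1+h_2)$ would yield a strictly smaller value, a contradiction.

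Alternatively, and equivalently, I could characterize the minimizer through the first-order optimality condition $\nabla J_\varepsilon(\overline h_\varepsilon^{(\tau)})=0$, which reads $(I+\varepsilon S^\ast S)\,\overline h_\varepsilon^{(\tau)} = \varepsilon S^\ast z$ in $\mathcal H$; the operator $I+\varepsilon S^\ast S$ is bounded, self-adjoint and coercive (it dominates the identity), so the Lax--Milgram lemma delivers existence and uniqueness of $\overline h_\varepsilon^{(\tau)}$ directly, and the corresponding solution of \eqref{funct}--\eqref{control} then follows by construction through $\phi=G h+G_0\varphi_0$. I do not expect a genuine obstacle here: the only nontrivial ingredient is the continuity of $S$, i.e. the continuous dependence of $\phi(\cdot,0)$ on the control $h$, which is exactly the well-posedness of \eqref{control} invoked above; the remainder is the standard Hilbert-space quadratic-minimization argument, where the regularizing term $\tfrac12\|h\|_{\mathcal H}^2$ is what supplies both coercivity and strict convexity independently of the properties of $S$.
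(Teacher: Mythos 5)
Your proposal is correct and follows essentially the same route as the paper: the paper's proof consists of setting up the reduced quadratic problem via the operators $S=E_0G$ and $z=-G_0\varphi_0$ and then invoking \cite[Theorem 2.14]{troltzsch2010optimal}, which is exactly the Hilbert-space quadratic-minimization result whose standard proof (coercivity, strict convexity, weak lower semicontinuity, or equivalently Lax--Milgram) you have spelled out. Your version simply makes explicit the argument that the paper delegates to the citation.
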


In addition, using Theorem 3.19 and 3.20 in \cite{troltzsch2010optimal}, we obtain the following first-order necessary condition.
\begin{theorem}\label{th4}
Let $\overline{h}_{\varepsilon}^{(\tau)}$ be the minimizer of \eqref{funct} over $L^2(0,\tau;L^2(\omega))$ with the associated state $\overline{\phi}_{\varepsilon}$. Then, for every $\varepsilon>0$, the pair $(\overline{h}_{\varepsilon}^{(\tau)},\overline{\phi}_{\varepsilon})$ satisfies the optimality system
\begin{align}\label{control-3}
\begin{cases}
-\partial_t \overline{\phi}_{\varepsilon} + (-\partial_x^2)^{s} \overline{\phi}_{\varepsilon} = \overline{h}_{\varepsilon}^{(\tau)}\chi_{\omega\times(0,\tau)} & \mbox{ in }\; (-1,1)\times(0,\tau),\\
\overline{\phi}_{\varepsilon}=0 &\mbox{ in }\; (\Omc)\times (0,\tau), \\
\overline{\phi}_{\varepsilon}(\cdot,\tau) = \varphi_0&\mbox{ in }\; (-1,1),
\end{cases}
\end{align}
and $\overline{p}_{\varepsilon}\in C([0,\tau];L^2(-1,1))$ solves the adjoint problem
\begin{align}\label{dual-1}
\begin{cases}
\partial_t \overline{p}_{\varepsilon} + (-\partial_x^2)^{s} \overline{p}_{\varepsilon} = 0 & \mbox{ in }\; (-1,1)\times(0,\tau),\\
\overline{p}_{\varepsilon}=0 &\mbox{ in }\; (\Omc)\times (0,\tau), \\
\overline{p}_{\varepsilon}(\cdot,0) = \varepsilon\overline{\phi}_{\varepsilon}(\cdot,0)&\mbox{ in }\; (-1,1),
\end{cases}
\end{align}
with the projection formula
\begin{align}\label{fop}
\overline{h}_{\varepsilon}^{(\tau)}+\overline{p}_{\varepsilon}=0,\quad \text{in }\omega\times(0,\tau).
\end{align}
\end{theorem}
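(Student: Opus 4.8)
The plan is to derive the optimality system directly from the first-order stationarity condition $J_\varepsilon'(\overline h_\varepsilon^{(\tau)})=0$, which for this problem is both necessary and sufficient since $J_\varepsilon$ is a continuous, strictly convex quadratic functional on the Hilbert space $L^2(0,\tau;L^2(\omega))$ (this convexity is what underlies the cited results of \cite{troltzsch2010optimal}). First I would compute the G\^{a}teaux derivative of $J_\varepsilon$ at the minimizer in an arbitrary direction $k\in L^2(0,\tau;L^2(\omega))$. Since the control-to-state map $h\mapsto\phi$ is affine with linear part $G$, the directional derivative of the state is $\psi:=Gk$, i.e.\ the solution of \eqref{control} with control $k$ and zero terminal datum $\varphi_0=0$; by Remark \ref{r1} one has $\psi\in W(0,\tau)\hookrightarrow C([0,\tau];L^2(-1,1))$, so the trace $\psi(\cdot,0)$ is well defined. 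Differentiating the two terms of $J_\varepsilon$ in the form \eqref{funct} then gives
\begin{align*}
\langle J_\varepsilon'(\overline h_\varepsilon^{(\tau)}),k\rangle = \int_0^\tau\int_\omega \overline h_\varepsilon^{(\tau)}\,k\,dx\,dt + \varepsilon\,(\overline\phi_\varepsilon(\cdot,0),\psi(\cdot,0))_{L^2(-1,1)}.
\end{align*}

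The crux is to rewrite the penalization term $\varepsilon(\overline\phi_\varepsilon(\cdot,0),\psi(\cdot,0))_{L^2(-1,1)}$ as an integral of $k$ against an adjoint state, so that stationarity becomes a pointwise identity on $\omega\times(0,\tau)$. To this end I introduce $\overline p_\varepsilon$ as the solution of the \emph{forward} problem \eqref{dual-1} with initial datum $\overline p_\varepsilon(\cdot,0)=\varepsilon\overline\phi_\varepsilon(\cdot,0)$, and I test the equation $-\partial_t\psi+(-\partial_x^2)^s\psi=k\chi_\omega$ against $\overline p_\varepsilon$, integrating over $(-1,1)\times(0,\tau)$. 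Integrating by parts in time yields the boundary contribution $-\int_{-1}^1[\psi\,\overline p_\varepsilon]_0^\tau\,dx$; since $\psi(\cdot,\tau)=0$ and $\overline p_\varepsilon(\cdot,0)=\varepsilon\overline\phi_\varepsilon(\cdot,0)$, this collapses to $\varepsilon(\psi(\cdot,0),\overline\phi_\varepsilon(\cdot,0))_{L^2(-1,1)}$. For the spatial part I would invoke the self-adjointness of the Dirichlet fractional Laplacian through the integration-by-parts formula \eqref{Int-Part}: because both $\psi$ and $\overline p_\varepsilon$ vanish in $\Omc$, the nonlocal normal-derivative terms cancel and $(-\partial_x^2)^s$ may be transferred from $\psi$ onto $\overline p_\varepsilon$. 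Collecting everything and using $\partial_t\overline p_\varepsilon+(-\partial_x^2)^s\overline p_\varepsilon=0$ leaves exactly
\begin{align*}
\varepsilon\,(\overline\phi_\varepsilon(\cdot,0),\psi(\cdot,0))_{L^2(-1,1)} = \int_0^\tau\int_\omega \overline p_\varepsilon\,k\,dx\,dt.
\end{align*}

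Substituting this into the derivative, the stationarity condition becomes $\int_0^\tau\int_\omega(\overline h_\varepsilon^{(\tau)}+\overline p_\varepsilon)\,k\,dx\,dt=0$ for every $k\in L^2(0,\tau;L^2(\omega))$, whence the projection formula \eqref{fop} follows by the fundamental lemma of the calculus of variations. The state equation \eqref{control-3} is simply \eqref{control} written for the optimal pair $(\overline h_\varepsilon^{(\tau)},\overline\phi_\varepsilon)$, and the adjoint equation \eqref{dual-1} is the one imposed by construction, so the optimality system is complete.

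The step I expect to be the main obstacle is the rigorous justification of the two integrations by parts at the level of finite-energy weak solutions rather than classical ones. The temporal integration by parts has to be carried out via the embedding $W(0,\tau)\hookrightarrow C([0,\tau];L^2(-1,1))$ and the weak formulation of Definition \ref{D1}, so that the terminal and initial traces are meaningful and the duality pairing $u_t$--$\psi$ is legitimate; the spatial integration by parts has to be read through the bilinear form $\mathcal F$ and the identity \eqref{Int-Part}, with explicit verification that the exterior (nonlocal normal-derivative) contributions vanish because both functions are supported in $\overline{(-1,1)}$. Once these two identities are secured in the weak sense, the remaining algebra and the passage to the pointwise projection formula are routine.
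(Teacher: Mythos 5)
Your proof is correct. The paper itself does not carry out this computation: after recasting \eqref{funct}--\eqref{control} in the reduced abstract form \eqref{funct-2}, $\min_h \frac12\|h\|^2+\frac\varepsilon2\|Sh-z\|^2$ with $z=-G_0\varphi_0$, it simply invokes Theorems 3.19 and 3.20 of \cite{troltzsch2010optimal} to read off the optimality system. What you have written is, in effect, a self-contained derivation of exactly what those cited theorems encapsulate: the G\^{a}teaux derivative of the reduced functional, the introduction of the adjoint state $\overline p_\varepsilon$ solving \eqref{dual-1}, and the duality identity $\varepsilon(\overline\phi_\varepsilon(\cdot,0),\psi(\cdot,0))_{L^2(-1,1)}=\int_0^\tau\int_\omega \overline p_\varepsilon\,k\,dx\,dt$ obtained by pairing the linearized state equation with $\overline p_\varepsilon$, which turns stationarity into \eqref{fop}. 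Your sign bookkeeping is right (the backward-in-time state equation with terminal datum at $t=\tau$ pairs with a forward-in-time adjoint with datum at $t=0$), and your two flagged technical points are genuine but routine: the time integration by parts is the standard Lions--Magenes identity for two functions of $W(0,\tau)$, and the spatial transfer of $(-\partial_x^2)^s$ is immediate from \eqref{Int-Part} since both $\psi$ and $\overline p_\varepsilon$ vanish in $\Omc$, killing the $\mathcal N_s$ terms. The trade-off is the usual one: the paper's route is shorter and leans on verifying that the problem fits the abstract linear-quadratic framework of \cite{troltzsch2010optimal}, while yours is longer but explicit about where the nonlocal structure (the form $\mathcal F$ and the exterior Dirichlet condition) enters, and additionally makes clear that, by strict convexity, the system \eqref{control-3}--\eqref{fop} is not only necessary but also sufficient for optimality.
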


Now, under the existence and first-order optimality condition, the following theorem shows that the optimal solutions $(\overline{h}_\varepsilon, \overline{\phi}_\varepsilon, \overline{p}_\varepsilon)$ of the optimal control problem \eqref{funct} indexed in $\varepsilon$ will help us to design the numerical algorithm to numerically compute the reconstruction formula. 

\begin{theorem}\label{th5}
Let $\overline{h}_{\varepsilon}^{(\tau)}$ be the minimizer of \eqref{funct} over $L^2(0,\tau;L^2(\omega))$ with the associated state $\overline{\phi}_{\varepsilon}$. Let $\overline{p}_{\varepsilon}\in C([0,\tau];L^2(-1,1))$ be the solution to the adjoint problem \eqref{dual-1}. Then, when $\varepsilon$ tends to infinity, the following convergence holds:
\begin{multline}\label{reconsformula}
-\frac{\sigma(0)}{\sigma(T)}(u,\theta_{\varepsilon}^{(\tau)})_{L^2(\omega)}-\frac{1}{\sigma(T)}\int_0^T \sigma'(T-\tau)(u,\theta_{\varepsilon}^{(\tau)})_{L^2(\omega)}d\tau\\-\frac{1}{\sigma(T)}((-\partial_x^2)^s u(\cdot,T),\varphi_0)_{L^2(-1,1)} \to (f,\varphi_0)_{L^2(-1,1)}.
\end{multline}
\end{theorem}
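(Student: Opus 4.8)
The plan is to re-run the duality computation from the proof of Theorem~\ref{th1}, but with the penalized control $\overline{h}_\varepsilon^{(\tau)}$ of Theorem~\ref{th4} in place of the exact null control, and then let $\varepsilon\to\infty$. The starting point is the exact identity \eqref{ec6}, which expresses $(f,\varphi_0)_{L^2(-1,1)}$ through the quantities $(w(\cdot,\tau),\varphi_0)_{L^2(-1,1)}$ together with the $\varepsilon$-independent term $\tfrac{1}{\sigma(T)}((-\partial_x^2)^s u(\cdot,T),\varphi_0)_{L^2(-1,1)}$. The latter involves no control and so requires no limiting argument; it is carried over directly from \eqref{ec6} into the left-hand side of \eqref{reconsformula}. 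The whole difficulty is thus to show that the penalized controls reproduce each coefficient $(w(\cdot,\tau),\varphi_0)_{L^2(-1,1)}$ up to an error that vanishes as $\varepsilon\to\infty$, and to do so uniformly enough in $\tau$ to pass the limit under the integral sign.

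First I would repeat the integration by parts \eqref{ec8}--\eqref{ec10} using $\overline{\phi}_\varepsilon$ from \eqref{control-3} as test function in the weak formulation of the $w$-equation \eqref{ec2}. The only change relative to Theorem~\ref{th1} is that now $\overline{\phi}_\varepsilon(\cdot,0)\neq 0$, so the boundary term at $t=0$ survives; using $w(\cdot,0)=f$ and $\overline{\phi}_\varepsilon(\cdot,\tau)=\varphi_0$ this yields
\begin{align*}
(w(\cdot,\tau),\varphi_0)_{L^2(-1,1)}=-\int_\omega\int_0^\tau \overline{h}_\varepsilon^{(\tau)}\,w\,dt\,dx+(f,\overline{\phi}_\varepsilon(\cdot,0))_{L^2(-1,1)}.
\end{align*}
Since $\theta_\varepsilon^{(\tau)}$ is defined through the Volterra equation \eqref{Volterra-1} with datum $\eta=\overline{h}_\varepsilon^{(\tau)}$, Lemma~\ref{Lemma2} gives $\overline{h}_\varepsilon^{(\tau)}=K^{*}\theta_\varepsilon^{(\tau)}$ and Lemma~\ref{Lemma3} gives $Kw=u$, so the adjoint relation converts the first term into $-\langle u,\theta_\varepsilon^{(\tau)}\rangle$ exactly as in \eqref{ec11}--\eqref{ec12}. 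This produces the exact coefficient plus a single error term, namely $(w(\cdot,\tau),\varphi_0)_{L^2(-1,1)}=-\langle u,\theta_\varepsilon^{(\tau)}\rangle+(f,\overline{\phi}_\varepsilon(\cdot,0))_{L^2(-1,1)}$, in which the error is controlled by $\|\overline{\phi}_\varepsilon(\cdot,0)\|_{L^2(-1,1)}$.

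Next I would invoke the convergence theory of the penalized HUM method (as in \cite{boyer2013}), whose hypotheses hold here because the system is null controllable for $s\in(1/2,1)$ by the null-controllability result of Section~\ref{sec-2} and its cost estimate \eqref{obs-ine}. This gives, as $\varepsilon\to\infty$, that $\|\overline{\phi}_\varepsilon(\cdot,0)\|_{L^2(-1,1)}\to 0$ and that $\overline{h}_\varepsilon^{(\tau)}$ converges strongly in $L^2(0,\tau;L^2(\omega))$ to the HUM null control $h^{(\tau)}$. Hence the error term $(f,\overline{\phi}_\varepsilon(\cdot,0))_{L^2(-1,1)}$ vanishes, and by the Volterra stability estimate \eqref{Volterra-2} the convergence of the controls propagates to $\theta_\varepsilon^{(\tau)}\to\theta^{(\tau)}$ in $H^1(0,T;L^2(-1,1))$, so that $\langle u,\theta_\varepsilon^{(\tau)}\rangle\to\langle u,\theta^{(\tau)}\rangle$ for each fixed $\tau$. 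Substituting the limiting identity into \eqref{ec6} reproduces the reconstruction formula of Theorem~\ref{th1}, whence the left-hand side of \eqref{reconsformula} converges to $(f,\varphi_0)_{L^2(-1,1)}$.

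The main obstacle is interchanging $\varepsilon\to\infty$ with the $\tau$-integral $\int_0^T\sigma'(T-\tau)\langle u,\theta_\varepsilon^{(\tau)}\rangle\,d\tau$: the pointwise-in-$\tau$ convergence must be upgraded to something permitting passage to the limit under the integral. I would do this by dominated convergence. The penalized minimizer obeys $\tfrac12\|\overline{h}_\varepsilon^{(\tau)}\|^2\le J_\varepsilon(\overline{h}_\varepsilon^{(\tau)})\le J_\varepsilon(h^{(\tau)})=\tfrac12\|h^{(\tau)}\|^2$, so $\|\overline{h}_\varepsilon^{(\tau)}\|\le\|h^{(\tau)}\|$ uniformly in $\varepsilon$; combined with \eqref{Volterra-2} and $\sigma\in W^{1,\infty}$ this bounds $|\sigma'(T-\tau)\langle u,\theta_\varepsilon^{(\tau)}\rangle|$ by $C\|u\|\,\|h^{(\tau)}\|$ independently of $\varepsilon$. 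The delicate point is the behaviour of the control cost $\|h^{(\tau)}\|$ as $\tau\to 0$: one must verify that it yields a $\tau$-integrable dominating function on $(0,T)$, or instead exploit that $u(\cdot,0)=0$ makes the pairing small for small $\tau$ (recall $\theta^{(\tau)}$ is supported in $[0,\tau]$, so $\langle u,\theta_\varepsilon^{(\tau)}\rangle$ only sees $u$ on a short initial interval). Establishing this uniform-in-$\tau$ integrability, rather than the per-$\tau$ convergence, is the crux of the argument.
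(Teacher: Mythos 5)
Your proposal is correct in substance but follows a genuinely different architecture from the paper's proof, and the comparison is instructive. The paper proceeds by a self-contained compactness argument: it multiplies the state equation \eqref{control-3} by $\overline{p}_{\varepsilon}$ to get the energy identity \eqref{th5-1}, combines it with the observability inequality \eqref{obs-ine} and the projection formula \eqref{fop} to obtain the uniform bound \eqref{th5-4}, extracts weakly convergent subsequences of $\overline{h}_{\varepsilon}^{(\tau)}$, $\overline{\phi}_{\varepsilon}$ and $\theta_{\varepsilon}^{(\tau)}$, upgrades $\overline{\phi}_{\varepsilon}(\cdot,0)$ to strong convergence via the compact embedding $H_0^s(-1,1)\hookrightarrow L^2(-1,1)$, concludes that the limit is an exact null control, and then ends tersely with ``which implies the desired result'' --- i.e.\ it implicitly applies Theorem~\ref{th1} to the limiting objects. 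You instead re-run the duality computation of Theorem~\ref{th1} directly with the penalized control, which produces the clean quantitative identity $(w(\cdot,\tau),\varphi_0)_{L^2(-1,1)}=-\langle u,\theta_{\varepsilon}^{(\tau)}\rangle+(f,\overline{\phi}_{\varepsilon}(\cdot,0))_{L^2(-1,1)}$, so the whole $\varepsilon$-error is isolated in one term linear in $\|\overline{\phi}_{\varepsilon}(\cdot,0)\|_{L^2(-1,1)}$; you then outsource its vanishing and the strong convergence of the controls to the penalized HUM theory of \cite{boyer2013} (legitimate, since null controllability holds for $s\in(1/2,1)$), and propagate convergence to $\theta_{\varepsilon}^{(\tau)}$ by linearity of the Volterra map and \eqref{Volterra-2}. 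What the paper's route buys is self-containedness (it proves the HUM convergence rather than citing it); what your route buys is an explicit, per-$\tau$ error formula and, importantly, an honest treatment of the interchange of $\varepsilon\to\infty$ with the integral $\int_0^T\sigma'(T-\tau)\langle u,\theta_{\varepsilon}^{(\tau)}\rangle\,d\tau$, which the paper never addresses at all: the paper merely asserts that the observability constant $C(\tau)$ is ``uniformly bounded away from $\tau=0$'', which does not cover the behaviour of the control cost as $\tau\to 0$ inside that integral. Your proposed domination $\|\overline{h}_{\varepsilon}^{(\tau)}\|\le\|h^{(\tau)}\|$ (from comparing $J_\varepsilon$ at the minimizer and at the exact null control) is correct and uniform in $\varepsilon$, and your suggested exploitation of $u(\cdot,0)=0$ together with the support of $\theta^{(\tau)}$ in $[0,\tau]$ is a plausible way to close the $\tau\to0$ integrability issue; you leave this step unfinished, but so does the paper, so this is a shared gap rather than a defect of your approach relative to the paper's. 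One minor remark: carrying the term from \eqref{ec6} gives it with a plus sign, $+\sigma(T)^{-1}((-\partial_x^2)^s u(\cdot,T),\varphi_0)_{L^2(-1,1)}$, whereas \eqref{reconsformula} displays a minus sign; this appears to be a sign typo in the statement of the theorem (it is inconsistent with Theorem~\ref{th1} via \eqref{ec7}), not an error in your derivation.
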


\begin{proof}
From the optimality system given in Theorem \ref{th4}, we multiply \eqref{control-3} by $p_{\varepsilon}$, integrate over $(-1,1)\times(0,\tau)$, and apply the integration by parts formula \eqref{Int-Part} to obtain:
\begin{align}\label{th5-1}
\int_0^{\tau}\int_{\omega} |h_{\varepsilon}^{(\tau)}|^2dxdt + \varepsilon\|\phi_{\varepsilon}(\cdot,0)\|_{L^2(-1,1)}^2 = \int_{-1}^1 p_{\varepsilon}(x,\tau)\varphi_0(x)dx.
\end{align}
We observe that the right-hand side of \eqref{th5-1} can be bounded as follows:
\begin{align}\label{th5-2}
\int_{-1}^1 p_{\varepsilon}(x,\tau)\varphi_0(x)dx\leq \frac{\delta}{2}\|p_{\varepsilon}(\cdot,\tau)\|_{L^2(-1,1)}^2+ \frac{1}{\delta}\|\varphi_0\|_{L^2(-1,1)}^2,
\end{align}
for every $\delta>0$. From the observability inequality \eqref{obs-ine} to system \eqref{dual-1}, there exists 
a constant $C=C(\tau)>0$, uniformly bounded away from $\tau=0$, such that
\begin{align}\label{th5-3}
\|p_{\varepsilon}(\cdot,\tau)\|_{L^2(-1,1)}^2 \leq C\int_0^{\tau}\int_{\omega}|p_{\varepsilon}|^2dxdt.
\end{align}
Because $p_{\varepsilon}=-h_{\varepsilon}^{(\tau)}$ a.e. in $\omega\times(0,\tau)$, substituting \eqref{th5-3} into \eqref{th5-2}, and then in \eqref{th5-1}, we obtain
\begin{align*}
\int_0^{\tau}\int_{\omega} |h_{\varepsilon}^{(\tau)}|^2dxdt +\varepsilon\|\phi_{\varepsilon}\|_{L^2(-1,1)}^2\leq \frac{\delta C(\tau)}{2}\int_0^{\tau}\int_{\omega} |h_{\varepsilon}^{(\tau)}|^2dxdt+\frac{1}{2\delta}\|\varphi_0\|_{L^2(-1,1)}^2.
\end{align*}
Taking $\delta=\frac{1}{C(\tau)}$, we get
\begin{align}\label{th5-4}
\frac{1}{2\varepsilon}\int_0^{\tau}\int_{\omega} |h_{\varepsilon}^{(\tau)}|^2dxdt +\|\phi_{\varepsilon}\|_{L^2(-1,1)}^2\leq \frac{ C(\tau)}{2\varepsilon}\|\varphi_0\|_{L^2(-1,1)}^2.
\end{align}

However, because the sequence  $\{h_{\varepsilon}^{(\tau)}\}_{0\leq \tau\leq T}$ is uniformly bounded in $L^2(0,\tau;L^2(\omega))$, we can conclude from the continuity of the solution $\phi_{\varepsilon}$ with respect to the data that $\phi_{\varepsilon}$ is uniformly bounded in $C([0,\tau];H_0^s(-1,1))$. Therefore, by extracting the subsequence of $\{h_{\varepsilon}^{(\tau)}\}$ and $\{\phi_{\varepsilon}\}$, denoted in the same way,  the following holds when $\varepsilon$ approaches infinity :
\begin{align*}
h_{\varepsilon}^{(\tau)} \rightharpoonup h^{(\tau)} \quad \text{weakly in } L^2(0,\tau;L^2(\omega)),\\
\phi_{\varepsilon} \rightharpoonup \phi \quad \text{weakly in } L^2(0,\tau; H_0^s(-1,1)),\\
\partial_t \phi_{\varepsilon} \rightharpoonup \partial_t \phi \quad \text{weakly in } L^2(0,\tau; L^2(-1,1)).
\end{align*}
It is well-known that embedding  $H_0^s(-1,1)\hookrightarrow L^2(-1,1)$ is compact. Using this fact, from \eqref{th5-4}, we obtain 
\begin{align*}
\phi_{\varepsilon}(\cdot,0)\to \phi(\cdot,0) \quad\text{ in }L^2(-1,1).
\end{align*}
Therefore, again from \eqref{th4} and the previous strong convergence, we deduce
\begin{align*}
\|\phi_{\varepsilon}(\cdot,0)\|_{L^2(-1,1)}\to 0\quad \text{ as }\varepsilon\to\infty.
\end{align*}
That is, $\phi(\cdot,0)=0$. Finally, taking $\eta=h_{\varepsilon}^{(\tau)}$ in Lemma \ref{Lemma1}, we can extract a subsequence of $\{\theta_{\varepsilon}^{(\tau)}\}$ such that
\begin{align*}
\theta_{\varepsilon}^{(\tau)} \rightharpoonup \theta^{(\tau)}\quad \text{weakly in }H^1(0,\tau;L^2(-1,1)),
\end{align*}
which implies the desired result \eqref{reconsformula}. The proof is finished.

\end{proof}

\section{Numerical Reconstruction Scheme}\label{sec-5}

As shown, recovering the Fourier coefficients of the spatial component of the source term involves several computational challenges. In this section, we present a detailed numerical scheme for addressing these issues.

\subsection{Numerical solution of the nonlocal heat equation}\label{sec-5.1}

Let us denote by $\Omega=(-1,1)$. To solve the nonlocal heat equation
\begin{align}\label{eq5.1}
\begin{cases}
u_t + (-\partial_x^2)^s u = F(x, t) & \text{in } \Omega \times (0, T), \\
u = 0 & \text{in } (\mathbb{R} \setminus \Omega) \times (0, T), \\
u(\cdot, 0) = u_0 & \text{in } \Omega.
\end{cases}
\end{align}
We used a finite element method for spatial discretization and an implicit Euler scheme for the time component based on the approach in \cite{biccaricontrollability}. The key steps are summarized below.

We begin by approximating the solution to the nonlocal Poisson equation
\begin{align}\label{eq5.2}
\begin{cases}
(-\partial_x^2)^s u = f & \text{in } \Omega, \\
u = 0 & \text{in } \mathbb{R} \setminus \Omega.
\end{cases}
\end{align}
The weak (variational) formulation seeks  \( u \in H^s_0(\Omega) \) such that
\begin{align*}
a(u, v) = \int_\Omega f(x) v(x) \, dx, \quad \forall v \in H^s_0(\overline\Omega),
\end{align*}
where \( a : H^s_0(\overline\Omega) \times H^s_0(\overline\Omega) \to \mathbb{R} \) is the bilinear form defined by
\begin{align*}
a(u, v) = C_s \int_{\mathbb{R}} \int_{\mathbb{R}} \frac{(u(x) - u(y))(v(x) - v(y))}{|x - y|^{1 + 2s}} \, dx \, dy.
\end{align*}
Because the bilinear form \( a(\cdot, \cdot) \) is continuous and coercive, the Lax-Milgram theorem guarantees a unique solution to this problem. More specifically, if \( f \in H^{-s}(\overline\Omega) \), then there exists a unique weak solution \( u \in H^s_0(\overline\Omega) \) (see \cite{BWZ1}).

Given this result, the finite element method (FEM) seeks approximate solutions by restricting the problem to a discrete subspace. Let \( N \in \mathbb{N} \) and consider a uniform mesh \( \mathfrak M := \{x_i\}_{i=0}^{N+1} \) with step size \( h = \frac{2}{N+1} \) and $x_i = -1 + hi$. We define the discrete space of the continuous and piecewise linear functions \( V_h \) as follows:
\[
V_h := \left\{ v \in H^s_0(\overline\Omega) : v|_{K_i} \in P_1, \; i = 0, 1, \ldots, N \right\},
\]
where \( K_i = [x_i, x_{i+1}] \). We then approximate the problem by solving for \( u_h \in V_h \) such that
\[
a(u_h, v_h) = \int_\Omega f(x) v_h(x) \, dx \quad \forall v_h \in V_h.
\]

Let \( \{\phi_i\}_{i=1}^N \) be the basis for \( V_h \), where each basis function is defined as
\[
\phi_i(x) = 
\begin{cases}
1 - \frac{|x - x_i|}{h} & \text{if } x \in (x_{i-1}, x_{i+1}), \\
0 & \text{otherwise}.
\end{cases}
\]
Then \( u_h \) can be expressed as a linear combination of these basis functions:
\[
u_h(x) = \sum_{j=1}^N u_j \phi_j(x),
\]
where \( u_j = u_h(x_j) \). By testing this expression with \( v_h = \phi_i \) for \( i = 1, \ldots, N \), we obtain the linear system
\[
A_h U = F,
\]
where \( A_h \) is a symmetric stiffness matrix with entries \( a_{i,j} = a(\phi_i, \phi_j) \), and \( F \) is a vector with components \( F_i = \int_\Omega f(x) \phi_i(x) \, dx \).

The components of the stiffness matrix \( A_h \in \mathbb{R}^{N \times N} \) depend only on the difference \( k = i - j \) and spacing \( h > 0 \) between the points (except for the case \( s = \frac{1}{2} \)), with specific values calculated in \cite{biccaricontrollability}.

Additionally, if we express \( f(x) \) as a linear combination of the finite element basis functions \( f(x) = \sum_{j=1}^N f_j \phi_j(x) \), we obtain
\[
\int_\Omega f(x) \phi_i(x) \, dx = \frac{h}{6} \left( f(x_{i-1}) + 4 f(x_i) + f(x_{i+1}) \right), \quad i = 1, \ldots, N,
\]
resulting in the mass matrix \( M_h \), a tridiagonal matrix given by
\[
M_h = h
\begin{pmatrix}
2/3 & 1/6 & 0 & \cdots & 0 \\
1/6 & 2/3 & 1/6 & \cdots & 0 \\
0 & 1/6 & 2/3 & \cdots & 0 \\
\vdots & \vdots & \vdots & \ddots & \vdots \\
0 & 0 & 0 & 1/6 & 2/3
\end{pmatrix}.
\]
This matrix satisfies the relation \( F = M_h f \), where \( f = (f_1, f_2, \dots, f_N)^\top \) is the vector of the coefficients of \( f(x) \) on a finite-element basis.

Using this approach, we can apply these matrices similar to the variational equation associated with the nonlocal heat equation \eqref{eq5.1}, yielding the following initial value problem:
\[
\begin{cases}
M_h U_t(t) + A_h U(t) = M_h F(t), & t \in (0, T), \\
U(0) = U_0,
\end{cases}
\]
where \( U(t) = (u(x_1, t), \dots, u(x_N, t))^\top \) and \( F(t) = (F(x_1, t), \dots, F(x_N, t))^\top \).

Finally, for uniform time discretization \( 0 = t_0 < t_1 < \dots < t_j = k j < \dots < t_M = T \), where \( k = T / M \), and we define \( U^j = U(t_j) \). By applying an implicit Euler discretization, we obtain
\[
\begin{cases}
M_h \displaystyle\frac{U^{j+1} - U^j}{k} + A_h U^{j+1} = F^{j+1}, & j = 0, \dots, M - 1, \\
U^0 = U_0.
\end{cases}
\]
This leads to the iterative solution of the linear system
\[
(M_h + k A_h) U^{j+1} = M_h U^j + k F^{j+1}, \quad U^0 = U_0.
\]
The above procedure allows for an approximation of the solution \( u \) to equation \eqref{eq5.1}. Additionally, by employing a similar process, we can discretize the differential equation associated with the control problem, using backward time discretization.

\subsection{Approximation of eigenfunctions}\label{sec-5.2}

In Section \ref{sec-5.1}, we discussed that to solve Poisson equation \eqref{eq5.2} numerically, it is necessary to solve the linear system
\[
A_h U = M_h F.
\]
Similarly, to solve the eigenvalue problem
\[
\begin{cases}
(-\partial_x^2)^s \varphi = \lambda \varphi & \text{in } \Omega, \\
\varphi = 0 & \text{in } \mathbb{R} \setminus \Omega.
\end{cases}
\]
We computed the eigenvalues of matrix \( D = M_h^{-1} A_h \), with the eigenvectors serving as approximations for the eigenfunctions \( \varphi_n \) of \((-\partial_x^2)_D^s\).

This approach is used to approximate the eigenvalues and eigenfunctions of the fractional Laplacian. However, it is essential to normalize the eigenvectors properly as functions of \( L^2(\Omega) \). This can be achieve by calculating their norms using integration or leveraging the structure of the approximation space, \( V_h \). Specifically, if we write an eigenvector as \( \varphi_n(x) = \sum_{j=1}^N \varphi_{n,j} \phi_j(x) \), then the norm is given by
\[
\|\varphi_n\|_{L^2(\Omega)}^2 = \int_{-1}^1 |\varphi_n(x)|^2 \, dx,
\]
which can be expressed as
\begin{align*}
\|\varphi_n\|_{L^2(\Omega)}^2 = h \left( \sum_{j=1}^N \frac{2}{3} |\varphi_{n,j}|^2 + \sum_{j=1}^{N-1} \frac{2}{6} \varphi_{n,j} \varphi_{n,j+1} \right)
=\begin{pmatrix}\varphi_{n,1}&\cdots&\varphi_{n,N}\end{pmatrix}M_h\begin{pmatrix}\varphi_{n,1}\\\vdots\\\varphi_{n,N}\end{pmatrix}.
\end{align*}
Thus, we obtain a normalized \( \varphi_n \) by using only the mass matrix \( M_h \).

\subsection{Approximation with partial sums}
\label{PSapprox}
As previously established in equation \eqref{series1}, the solution to the nonlocal heat equation can be expressed in terms of its Fourier coefficients. Similarly, this approach can be applied to the following system.
$$
\begin{array}{lc}
\multicolumn{1}{c}{\textbf{Differential equation}}&\textbf{Spectral solution}
\\\vspace{-5px}\\\fontsize{9.8}{13}\selectfont
\left\{\begin{array}{ll}
u_t + \flap u = 0&\text{in }\Omega\times(0,T),\\
u = 0							&\text{in }(\R\setminus\Omega)\times(0,T),\\
u(\cdot,0) = u_0				&\text{in }\Omega.
\end{array}\right.
&\displaystyle u(x,t)=
\sum_{n\geq1} u_{0,n}e^{-\lambda_nt}\varphi_n(x).
\\\vspace{-5px}\\\fontsize{9.8}{13}\selectfont
\left\{\begin{array}{ll}
-\phi_t + \flap \phi = f(x,t)&\text{in }\Omega\times(0,\tau),\\
\phi = 0							&\text{in }(\R\setminus\Omega)\times(0,\tau),\\
\phi(\cdot,\tau) = 0				&\text{in }\Omega.
\end{array}\right.
&\displaystyle \phi(x,t)=
\sum_{n\geq1}\left(\int_t^\tau e^{\lambda_n(t-s)}\left(f(\cdot,s)\right)_n\,ds\right)\varphi_n(x).
\\\vspace{-5px}\\\fontsize{9.8}{13}\selectfont
\left\{\begin{array}{ll}
-\phi_t + \flap \phi = 0&\text{in }\Omega\times(0,\tau),\\
\phi = 0							&\text{in }(\R\setminus\Omega)\times(0,\tau),\\
\phi(\cdot,\tau) = \varphi_n				&\text{in }\Omega.
\end{array}\right.
&\phi(x,t)=
e^{\lambda_n(t-\tau)}\varphi_n(x).\\~\vspace{-5px}
\end{array}$$

By employing a set of eigenfunctions up to a given number $n^*$, the spectral solution can be approximated using a partial sum. In the followings, we truncate the series to $n^* = N/5$ eigenfunctions, where $N+2$ represents the size of the mesh $\mathfrak M$.

\subsection{Numerical solution of the optimal control problem}

Let \( \varepsilon > 0 \), \( \tau > 0 \), and \( \varphi_0 \in L^2(\Omega) \) be fixed. As discussed in the previous section, the optimal control problem aims to minimize functional
\[
J(\phi, h) = \frac{1}{2} \int_0^{\tau} \int_\omega |h|^2 \, dx \, dt + \frac{\varepsilon}{2} \|\phi(\cdot, 0)\|_{L^2(\Omega)}^2,
\]
subject to the state equation
\[
\begin{cases}
-\phi_t + (-\partial_x^2)^s \phi = h \chi_{\omega \times (0, \tau)} & \text{in } \Omega \times (0, \tau), \\
\phi = 0 & \text{in } (\mathbb{R} \setminus \Omega) \times (0, \tau), \\
\phi(\cdot, \tau) = \varphi_0 & \text{in } \Omega.
\end{cases}
\]

According to the optimality conditions, the minimum satisfies systems \eqref{control-3} and \eqref{dual-1} and projection formula \eqref{fop}. Following \cite{garcia2013heat}, this relationship is used to define a linear system whose solution provides an approximation for \( \varepsilon\phi(\cdot, 0) \). To achieve this, we define the operator
\[
L : L^2(\Omega) \to L^2(\omega \times (0, \tau)), \quad e \mapsto -w \chi_{\omega \times (0, \tau)},
\]
where \( w \) is the solution to the adjoint problem with initial state \( w(\cdot, 0) = e \), and
\[
L' : L^2(\omega \times (0, \tau)) \to L^2(\Omega), \quad h \mapsto -\psi(\cdot, 0),
\]
where \( \psi \) is the solution to the state problem with source \( h \chi_{\omega \times (0, \tau)} \) and initial state \( \psi(\cdot, \tau) = 0 \).

We also define the operator \( \Lambda : L^2(\Omega) \to L^2(\Omega) \) as
\[
\Lambda e = L'Le.
\]

Using these operators, if \( \Phi_0 \) is the solution to the state problem with zero source and initial state \( \Phi_0(\cdot, \tau) = \varphi_0 \), then \( \phi \) can be expressed as
\[
\phi = \psi + \Phi_0,
\]
where \( \psi \) is the solution to the state problem with source \( h \chi_{\omega \times (0, \tau)} \) and initial state $\psi(\cdot,\tau) = 0$.

This yields the following relationship:
\[
\varepsilon^{-1} \varepsilon \phi(\cdot, 0) = -L' h + \Phi_0(\cdot, 0).
\]
Since \( h = -p \) on \( \omega \times (0, \tau) \) and by definition \( p = -L(\varepsilon \phi(\cdot, 0)) \), finding \( w_0 = \varepsilon \phi(\cdot, 0) \) is equivalent to solving
\[
\varepsilon^{-1} w_0 = -L' L w_0 + \Phi_0(\cdot, 0),
\]
or
\[
\left( \varepsilon^{-1} I + \Lambda \right) w_0 = \Phi_0(\cdot, 0).
\]
Finally, control \( h \) can be obtained by evaluating \( h = L w_0 \) in \( \omega \times (0, \tau) \).

The next step is to reduce the previous equation to the discrete space \( V_h \) defined earlier. That is, we seek \( w_0 = \sum_{j=1}^N w_j \phi_j \) such that
\begin{align}\label{eq5.3}
\sum_{j=1}^N w_j \left[ \left( \varepsilon^{-1} I + \Lambda \right) \phi_j \right](x_i) = \Phi_0(x_i, 0), \quad i = 1, \dots, N.
\end{align}

Thus, if for each \( j = 1, \dots, N \) we calculate \( p_j \) as the solution of the adjoint problem with initial state \( p_j(\cdot, 0) = \phi_j \) and \( \Phi_j \) as the solution of the direct problem with source \( p_j \chi_{\omega \times (0, \tau)} \) and zero final condition, it follows immediately that \( \Lambda \phi_j = \Phi_j(\cdot, 0) \) and \eqref{eq5.3} becomes a linear system:
\begin{align}\label{eq5.4}
\begin{pmatrix}
\varepsilon^{-1} I_N + 
\begin{pmatrix}
\Phi_1(x_i, 0) & \Phi_2(x_i, 0) & \cdots & \Phi_N(x_i, 0) \\
\vdots & \vdots & & \vdots \\
\Phi_1(x_N, 0) & \Phi_2(x_N, 0) & \cdots & \Phi_N(x_N, 0)
\end{pmatrix}
\end{pmatrix}
\begin{pmatrix}
w_1 \\
\vdots \\
w_N
\end{pmatrix}
= 
\begin{pmatrix}
\Phi_0(x_i, 0) \\
\vdots \\
\Phi_0(x_N, 0)
\end{pmatrix}.
\end{align}

To calculate \( \Phi_0 \), \( \Phi_j \), and \( p_j \) for \( j = 1, \dots, N \), and consequently approximate \( h \), it is possible to use the Finite Element method discussed in Sections \ref{sec-5.1} and \ref{sec-5.2}. However, carrying this out for each \( \tau_\ell > 0 \), where
\[
0 = \tau_0 < \tau_1 < \cdots < \tau_\ell = \kappa \ell < \cdots < \tau_{f_M} = T,
\]
is a uniform partition containing \( \{ t_j \}_{j=0}^M \), with \( \kappa = T / f_M \), which requires considerable computation time. Therefore, it is more convenient to express it in terms of the Fourier coefficients.

Therefore, we used expressions based on the basis presented in Section \ref{PSapprox}. It follows that, for each \( j = 1, \dots, N \)
\[
p_j(x, t) = \sum_{m\geq1} \rho_m(t) \varphi_m(x), \quad \rho_m(t) = (\phi_j)_m e^{-\lambda_m t},
\]
\[
\Phi_j(x, t) = \sum_{n\geq1} \zeta_n(t) \varphi_n(x), \quad \zeta_n(t) = \int_\tau^t e^{-\lambda_n(t-s)} ( p_j(\cdot, s), \varphi_n )_{L^2(\omega)} \, ds,
\]
\[
\Phi_0(x, 0) = e^{-\lambda_n \tau} \varphi_n(x), \quad \text{if } \varphi_0 = \varphi_n,
\]
where \( (\phi_j)_m = ( \phi_j, \varphi_m )_{L^2(\Omega)} \). Note that
\[
( p_j(\cdot, s), \varphi_n )_{L^2(\omega)} = \sum_{m=1}^{\infty} \rho_m(s) \int_\omega \varphi_n(x) \varphi_m(x) \, dx = \sum_{m\geq1} (\phi_j)_m\, e^{-\lambda_m s} ( \varphi_n, \varphi_m )_{L^2(\omega)}.
\]

Then, using the approximation of \( \varphi_n \) from the previous section, we can calculate the integral \( ( \varphi_n, \varphi_m )_{L^2(\omega)} \) using the trapezoidal rule. Applying this rule to \( (\phi_j)_m \), we obtain
\[
(\phi_j)_m = \int_{x_{j-1}}^{x_{j+1}} \phi_j(x) \varphi_m(x) \, dx \approx h \varphi_m(x_j).
\]
However, given the nature of \( \varphi_j \) and the integration rule in \ref{apA}, we obtain a more precise value:
\[
(\varphi_j)_m \approx \frac{1}{12} \left( \varphi_m(x_{j-1}) + 10 \varphi_m(x_j) + \varphi_m(x_{j+1}) \right).
\]

Therefore,
\[
\zeta_n(0) = \sum_{m=1}^{\infty} (\phi_j)_m \langle \varphi_n, \varphi_m \rangle_{L^2(\omega)} \int_\tau^0 e^{-(\lambda_n + \lambda_m) s} \, ds = \sum_{m\geq1} (\phi_j)_m \langle \varphi_n, \varphi_m \rangle_{L^2(\omega)} \frac{1 - e^{-(\lambda_n + \lambda_m) \tau}}{\lambda_n + \lambda_m}.
\]

Then,
\[
\Phi_j(x, 0) \approx \sum_{n=1}^{n^*} \left( \sum_{m=1}^{n^*} (\phi_j)_m \langle \varphi_n, \varphi_m \rangle_{L^2(\omega)} \frac{1 - e^{-(\lambda_n + \lambda_m) \tau}}{\lambda_n + \lambda_m} \right) \varphi_n(x).
\]

Incorporating this expression into system \eqref{eq5.4} and solving it, we only need to compute \( h = L w_0 \) in \( \omega \times (0, \tau) \):
\[
h(x, t) \approx \sum_{n=1}^{n^*} (w_0)_n\,e^{-\lambda_n t} \varphi_n(x) \quad \text{in } \omega \times (0, \tau).
\]

\subsection{Integral equation}

Following the method in \cite{garcia2013heat}, we consider the Volterra integral equation
\[
\begin{cases}
h(x, t) = \sigma(0) \theta_t(x, t) + \displaystyle\int_t^{\tau} \sigma(s - t) \theta(x, s) + \sigma'(s - t) \theta_t(x, s) \, ds, & 0 < t < \tau, \\
\theta(x, \tau) = 0,
\end{cases}
\]
for each \( \tau \in (0, T] \) and \( x \in \omega \), as indicated by Lemma \ref{Lemma1}.

The solution to this equation is denoted as $\theta^{(\tau)}$. Note that if \( \sigma(0) \neq 0 \), according to the Fundamental Theorem of Calculus,
\[
0 = \theta^{(\tau)}(x, t) + \int_t^{\tau} \theta^{(\tau)}_t(x, s) \, ds,
\]
which, coupled with the above integral equation, forms the system
\[
\begin{pmatrix}
0 \\
h^{(\tau)}(x, t)
\end{pmatrix}
= L_0 \begin{pmatrix}
\theta^{(\tau)}(x, t) \\
\theta^{(\tau)}_t(x, t)
\end{pmatrix}
+ \int_t^{\tau} M(t, s) \begin{pmatrix}
\theta^{(\tau)}(x, s) \\
\theta^{(\tau)}_t(x, s)
\end{pmatrix} \, ds,
\]
where \( L_0 \) and \( M(t, s) \) are defined as
\[
L_0 = \begin{pmatrix} 1 & 0 \\ 0 & \sigma(0) \end{pmatrix}, \quad M(t, s) = \begin{pmatrix} 0 & 1 \\ \sigma(s - t) & \sigma'(s - t) \end{pmatrix}.
\]
In vector notation, this can be expressed as
\[
\tilde{h}^{(\tau)}(x, t) = L_0 \tilde{\theta}^{(\tau)}(x, t) + \int_t^{\tau} M(t, s) \tilde{\theta}^{(\tau)}(x, s) \, ds, \quad t \in [0, \tau].
\]

Thus, for \( \tau = \tau_{\ell} \), \( \ell = 1, 2, \dots, M \), the solution \( \tilde{h}^{(\tau)}(x, \tau_{\ell}) \) satisfies by trapezoidal rule:
\begin{equation}
\label{volterra52}
\tilde{h}^{(\tau_{\ell})}(x, \tau_j) \approx L_0 \tilde{\theta}^{(\tau_{\ell})}(x, \tau_k) + \frac{\kappa}{2} \begin{bmatrix} M_{j,j} & 2M_{j,j+1} & \dots & 2M_{j,\ell-1} & M_{j,\ell} \end{bmatrix} \begin{bmatrix} \tilde{\theta}^{(\tau_{\ell})}(x, \tau_j) \\ \vdots \\ \tilde{\theta}^{(\tau_{\ell})}(x, \tau_{\ell}) \end{bmatrix},
\end{equation}
for $j=0, 1, \dots, \ell-1$, where \( M_{j,\ell} = M(\tau_j, \tau_{\ell}) \).

Then, with the initial condition $\widetilde\theta^{(\tau_\ell)}(x,\tau_\ell)=\left(\begin{smallmatrix}0\\\sigma(0)^{-1}\end{smallmatrix}\right)$, we write:
$$\mathbb M_\ell =
\begin{bmatrix}
M_{0,0}&2M_{0,1}&\cdots&2M_{0,\ell-1}&M_{0,\ell}\\
0&M_{1,1}&&2M_{1,\ell-1}&M_{1,\ell}\\
0&0&\ddots&\vdots&\vdots\\
\vdots&\vdots&&M_{\ell-1,\ell-1}&M_{\ell-1,\ell}\\
0&0&\cdots&0&0
\end{bmatrix}
\quad\text{ y }\quad
\mathbb L_\ell=
\begin{bmatrix}
L_0\\
&\ddots\\
&&L_0
\end{bmatrix}
\left.\begin{matrix}\\\\\end{matrix}\right\}~\ell+1\text{ times},
$$
so that \eqref{volterra52} becomes
\begin{equation}
\label{volterra53}
\left(\mathbb L_\ell+\frac{\kappa}2\mathbb M_\ell\right)
\begin{bmatrix}
\widetilde \theta^{(\tau_\ell)}(x,\tau_0)\\
\vdots\\
\widetilde \theta^{(\tau_\ell)}(x,\tau_\ell)
\end{bmatrix}
=
\begin{bmatrix}
\widetilde h^{(\tau_\ell)}(x,\tau_0)\\
\vdots\\
\widetilde h^{(\tau_\ell)}(x,\tau_\ell)
\end{bmatrix}.
\end{equation}
Thus, to approximate $\theta^{(\tau_\ell)}$ and $\theta_t^{(\tau_\ell)}$, it is necessary to solve 
\eqref{volterra53}
for each $x=x_i\in\omega$.

It is noteworthy that when $\ell$ corresponds to the initial terms, the discretized equation contains limited information. For example, when $\ell = 2$, the only information available for $\theta^{(\tau_\ell)}$ is at $t=0$ and $t=\tau_1$. Moreover, as seen in the subsequent calculations, the control exhibits a significantly higher amplitude as $\tau\to0$, making this lack of information non-trivial. To address this, the approach has been slightly modified to solve, for each $\tau=\tau_\ell$, $\ell=1,2,\dots,f_M$, but evaluated at $t=t_0,t_1,\dots,\tau_\ell$, since the second partition is contained within the first.

The numerical schemes presented enable the reconstruction of the Fourier coefficients of the source term in the fractional heat equation as well as the solution of the associated optimal control problem. The finite element method provides accurate approximations of both the state and adjoint systems, whereas the integral formulation facilitates efficient computation of the control variable. These aspects are the focus of the next section, in which we perform numerical experiments to validate our theoretical findings.


\section{Numerical Experiments}\label{sec-6}

In this section, we apply the techniques developed in previous sections to validate the proposed reconstruction methodology. As previously mentioned, the reconstruction formulas derived in Theorems \ref{th1} and \ref{th2} are valid only for \( s > 1/2 \). However, we also provide observations for the operator \((-\partial_x^2)^s\) when \( s \leq 1/2 \). 

We highlight the minimum data required to implement each methodology presented in the previous section and their dependencies in the case of variations in the calculations.

\begingroup
\renewcommand{\arraystretch}{1.5}
\begin{table}[h!]
\centering
\begin{tabular}{|c|c|c|}
\hline
Variable & Data Quantity & Dependency \\
\hline
Mass \( M_h \) and Stiffness \( A_h \) Matrices & \( 2 \times N \times N \) & \( s, \Omega \) \\
Eigenfunctions \( \varphi_n \) & \( N \times N \) & \( M_h, A_h \) \\
Matrices \( \Lambda_N \) & \( f_M \times (N \times N) \) & \( \omega, T, \tau_\ell, \varphi_n, \varepsilon_j \) \\
Control Solution \( h(\tau_\ell) \) & \( f_M \times e_n \times (N \times M) \) & \( \varphi_n, \Lambda_N, \varepsilon \) \\
Volterra Equation Solution \( \theta(\tau_\ell) \) & \( 2 \times f_M \times e_n \times (N \times M) \) & \( h(\tau_\ell), \sigma \) \\
Observed Solution \( u \) and \( u_t \) & \( 2 \times N \times M \) & \( \omega, T \) \\
Approximation of Fourier Coefficients \( f_n \) & \( e_n \) & \( u, u_t, \theta(\tau_\ell) \) \\
\hline
\end{tabular}

\caption{Size of each variable and its dependency.}
\end{table}
\endgroup

\subsection{Spectrum of the Fractional Laplacian}

Using the finite element method with \( N = 500 \) nodes, we computed the eigenvalues of the discrete fractional Laplacian, as shown in Figure \ref{fig01}. As expected, they show a monotonic increasing trend and closely follow the theoretical asymptotic behavior described in \eqref{lam}
\begin{equation}
\tilde{\lambda}_n = \left( \frac{n \pi}{2} - \frac{(1 - s) \pi}{4} \right)^{2s}, \quad n = 1, 2, \ldots,
\label{aproxeigenvalues}
\end{equation}
as long as \( n \) is sufficiently large.
\begin{figure}[!h]
\begin{center}
\includegraphics[scale=0.6]{./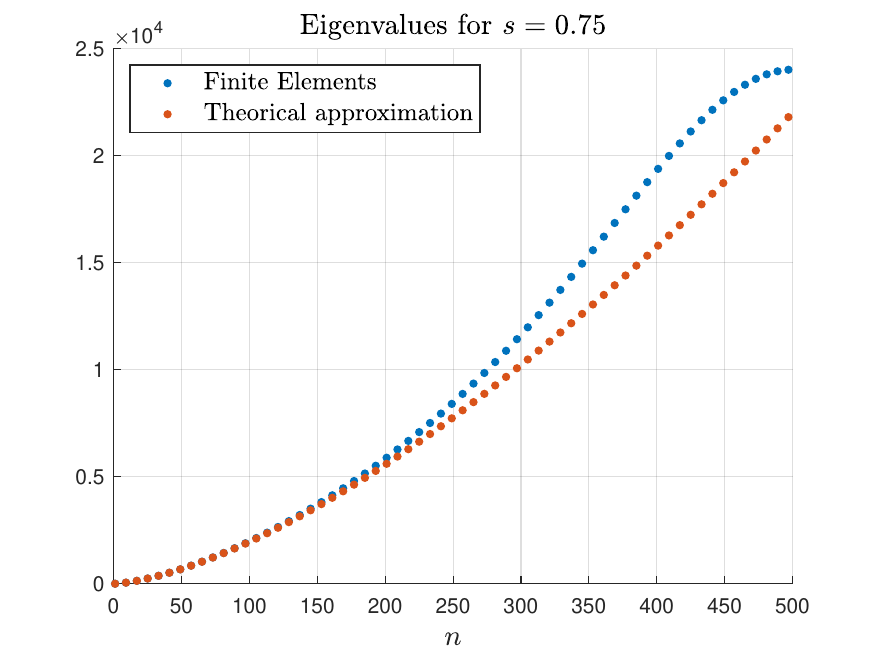}
\end{center}~\\[-10pt]
\caption{Eigenvalues estimated with FEM and approximation by \eqref{aproxeigenvalues}.}
\label{fig01}
\end{figure}

We note that the eigenvalues to be used will be up to \( e_n \) Fourier coefficients to be calculated or up to \( n^* = N / 5 \) terms in the partial sums. The error between the theoretical and numerical approximations depends on the number of nodes considered (\( N = 500 \)). By increasing to \( N = 750 \) and \( N = 1000 \), the finite element approximation of the first 500 eigenvalues improves substantially relative to the theoretical approximation, as shown in Figure \ref{fig023} for \( s = 0.25 \) and \( s = 0.75 \).
\begin{figure}[!h]
\begin{center}
\includegraphics[scale=0.5]{./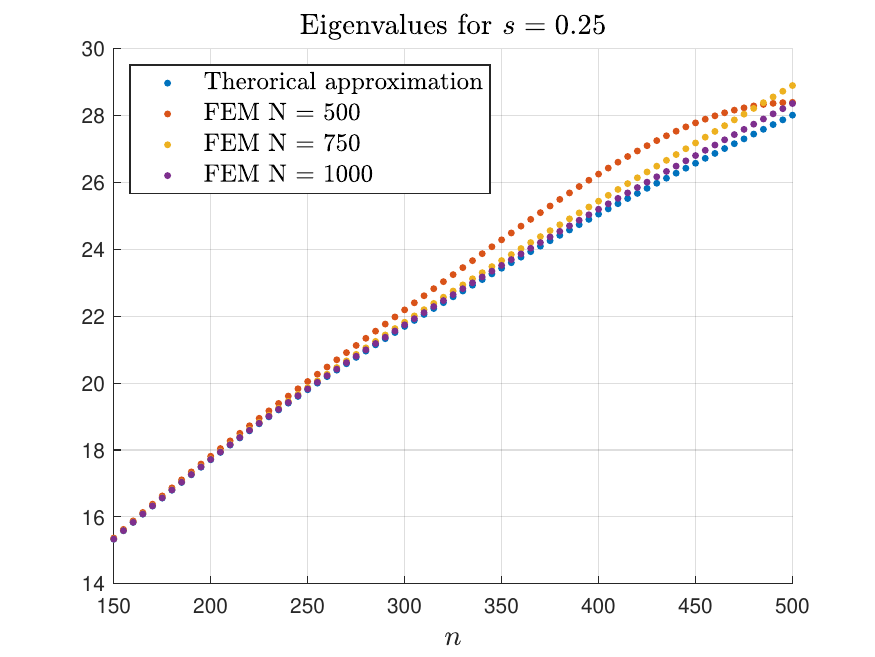}
\includegraphics[scale=0.5]{./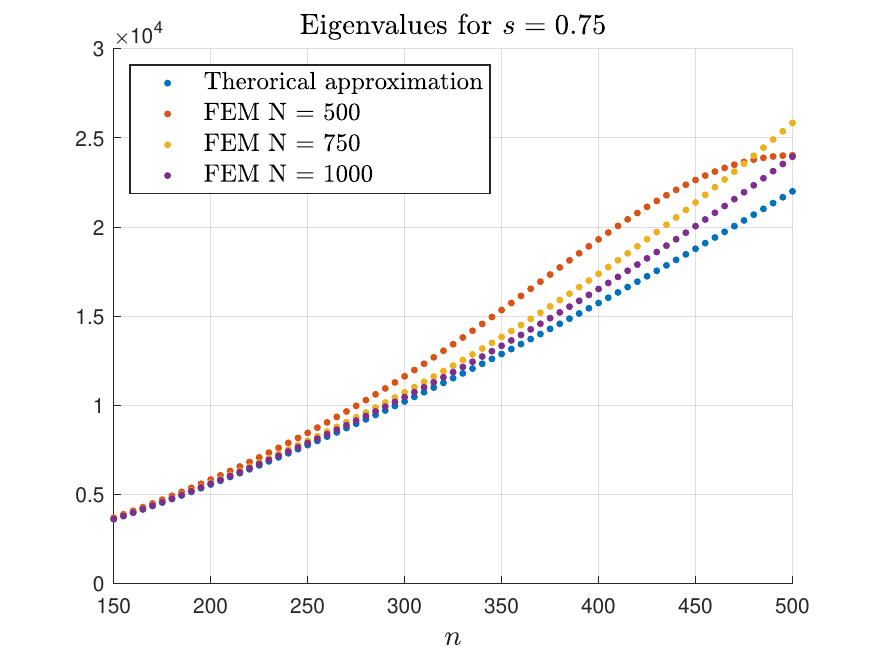}
\end{center}~\\[-10pt]
\caption{Eigenvalues according to the size of the partition.}
\label{fig023}
\end{figure}

Thus, to ensure the correct capture of the first 300 eigenvalues, all figures henceforth assume \( N = 1500 \), unless otherwise specified.

\subsection{Eigenfunction Approximation}

When computing the eigenfunctions, we recall that an asymptotic approximation for \( \varphi_n \) is provided in \cite{kwasnicki2012eigenvalues} for a sufficiently large \( n \). Although this work offers a more detailed expression, for simplicity, we compare our numerical results with the following approximation:
\begin{equation}
\tilde{\varphi}_n(x) = \sin \left( 2s \sqrt{\tilde{\lambda}_n} x + \frac{n \pi}{2} \right), \quad x \in \Omega,
\label{aproxeigenfunctions}
\end{equation}
which is illustrated in Figure \ref{fig05x}, for \( s = 0.25 \) and \( s = 0.75 \).
\begin{figure}[!h]
\begin{center}
\includegraphics[scale=0.5]{./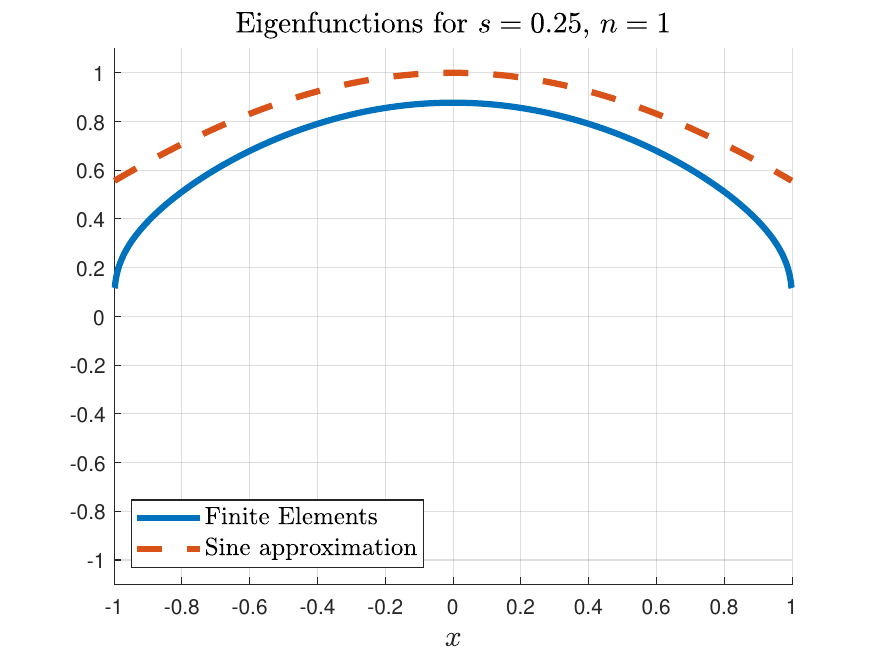}
\includegraphics[scale=0.5]{./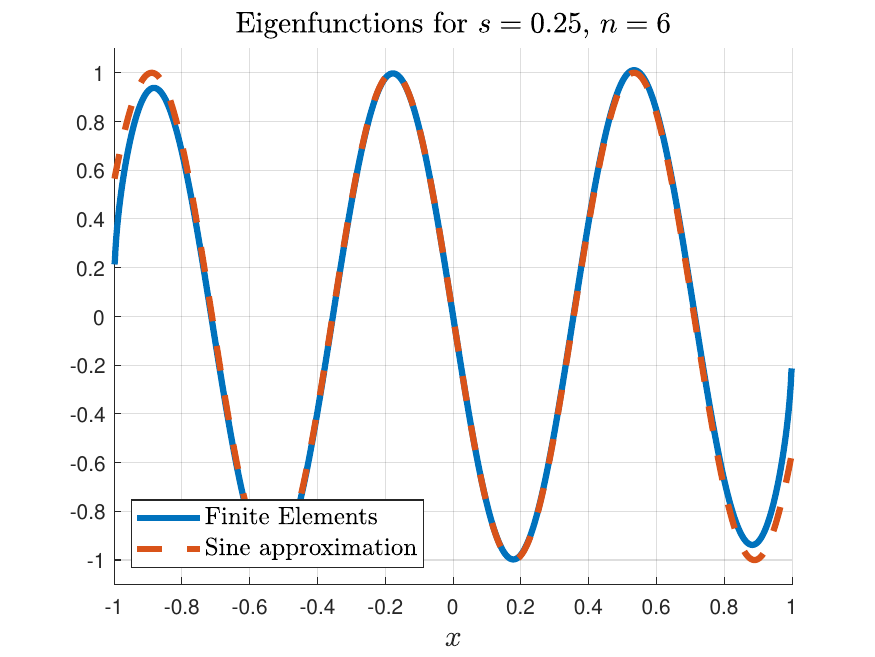}
\includegraphics[scale=0.5]{./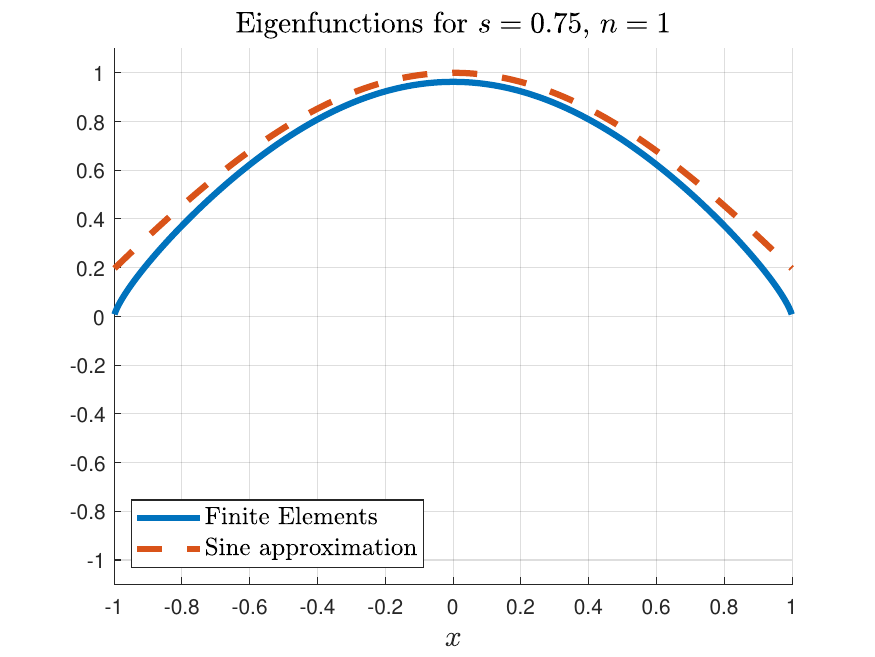}
\includegraphics[scale=0.5]{./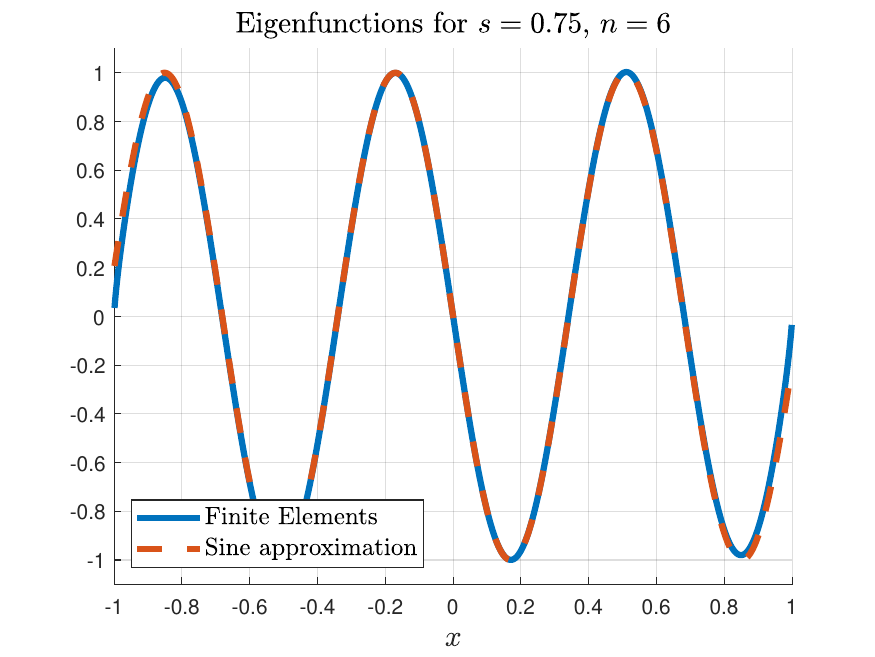}
\end{center}~\\[-10pt]
\caption{Eigenfunctions estimated with FEM and approximation by \eqref{aproxeigenfunctions}.}
\label{fig05x}
\end{figure}
This simpler approximation is designed to ensure a consistent sign for the function because the computed eigenvector may change its orientation when recalculated using different finite-element matrices.

\subsection{Configuration and Test Functions}

Next, we define the configuration used to solve the various problems. It is important to note that the finite element method for solving differential equations was validated in \cite{biccaricontrollability} using an explicit solution. However, in this study, we focused solely on visualizing the results obtained using this methodology.

The four functions defined below were selected to evaluate the performance of the numerical implementation, along with the numerical Finite Element solution to \eqref{eq5.1}, when \( \sigma(t) = e^t \) and \( s = 0.75 \):
\begin{align*}
f_1 &= \frac{5 (1 - x^2) (0.25 - x^2)(0.042 - x^2)}{2 (0.025 + x^2)}, \quad f_2 = (1 - x^2) \tan \left( \frac{\pi}{2.1} x \right),\\
f_3 &= \begin{cases} (5x - 1) \sin(20 \pi x) / 2 & \text{if } |x| < 0.2, \\ 0 & \text{otherwise,} \end{cases}\\
f_4 &= \begin{cases} -1 & \text{if } -0.95 < x \leq -0.8, \\ 1 & \text{if } 0.1 < x < 0.4, \\ 0 & \text{otherwise.} \end{cases}
\end{align*}

This selection aims to capture diverse behaviors:
\begin{enumerate}
\item Continuity: The first three functions are continuous.
\item Parity and Symmetry: \( f_1 \) is even, \( f_2 \) and \( f_3 \) are odd, and \( f_4 \) is asymmetric with respect to \( x = 0 \).
\item Frequency: \( f_1 \) and \( f_2 \) have dominant low-frequency components, whereas \( f_3 \) and \( f_4 \) contain high-frequency components.
\item Sign and Support: Only \( f_3 \) and \( f_4 \) are zero over more than half of the interval, and all exhibit a sign change. Moreover, \( f_4 \) has significant support outside the observation interval, as we will be defined later.
\end{enumerate}

Unless otherwise stated, the examples assume \( s = 0.75 \), \( T = 1 \), and mesh sizes \( M = 10000 \) and \( f_M = 100 \). Additionally, we evaluated the solution for different functions \( \sigma \).

\subsection{Zero Control}

To compute Fourier coefficients, we must first construct a family of null controls. This requires computing the matrices \( \Lambda_N \) defined in \eqref{eq5.3} for each \( \tau = \tau_\ell \), \( \ell = 1, 2, \ldots, f_M \), and the right-hand side of equation \( \Phi_0(\cdot, 0) \) for each initial condition \( \varphi_n \).

To reduce the computation time, we opted to compute \( \psi_j(\cdot, 0) \), \( j = 1, \ldots, N \) using partial sums instead of solving numerous differential equations on an \( N \times M \) space-time mesh. For instance, for \( j = N / 2 \), Figure \ref{fig18} shows that \( \psi_j(\cdot, 0) \) presents no significant differences when computed using partial sums versus the full finite-element differential equation.
\begin{figure}[!h]
\begin{center}
\includegraphics[scale=0.6]{./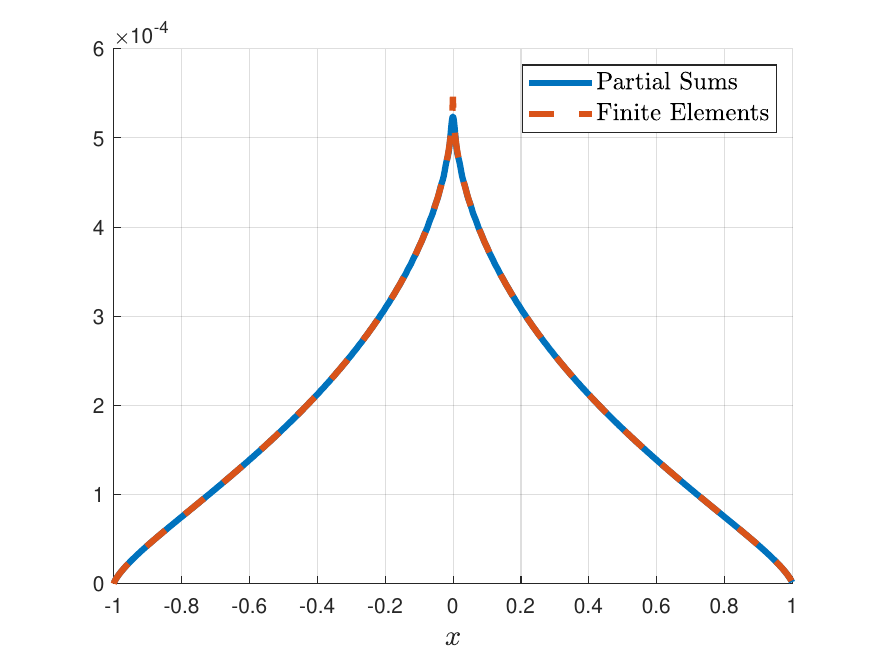}
\end{center}~\\[-10pt]
\caption{Approximation of final state $\Phi_j(x,0)$ with FEM and Partial Sums.}
\label{fig18}
\end{figure}

Similarly, \( \Phi_0(\cdot, 0) \) was computed using partial sums (an exact formula in this case), as illustrated in Figure \ref{fig19}. Solving system \eqref{eq5.4} provides an approximation of the controlled state \( \phi(\cdot, 0) \approx \varepsilon^{-1} w_0 \), and consequently, the control $ h$ using the projection formula \eqref{fop}.
\begin{figure}[!h]
\begin{center}
\includegraphics[scale=0.6]{./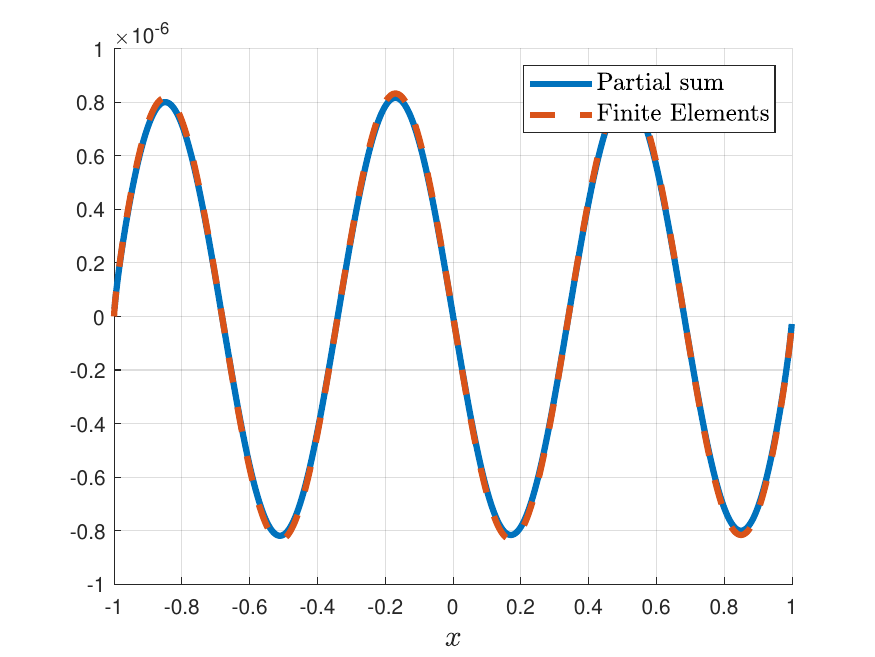}
\end{center}~\\[-10pt]
\caption{Approximation of final state $\Phi_0(x,0)$ with FEM and exact formula.}
\label{fig19}
\end{figure}

If we perform this calculation starting from $\tau=0.5$ with $\phi(\cdot,0)=\varphi_6$, in the observation interval $\omega= (-0.75,0.75)$, together with a penalty parameter $\varepsilon=10^4$, the control only acts when $t$ is close to zero, as illustrated in Figure \ref{fig201}, where $\omega$ is marked as the area delimited with dashed lines. This behavior exhibited by the control is natural because its norm is also involved in the optimization problem \eqref{funct}-\eqref{control}, meaning that the control allows the solution to ``lose energy'' before it starts acting. 
\begin{figure}[!h]
\begin{center}
\includegraphics[scale=0.5]{./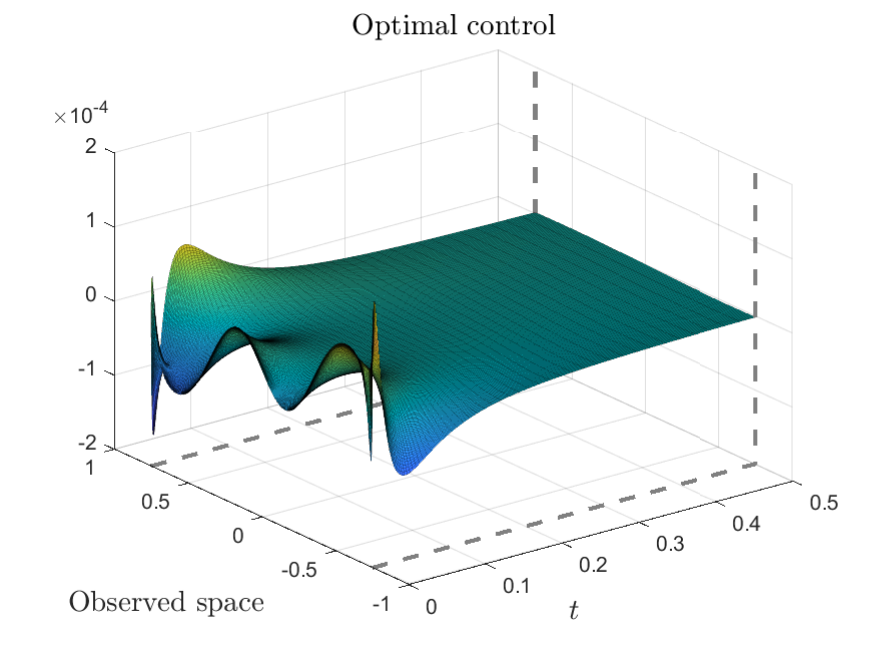}
\includegraphics[scale=0.5]{./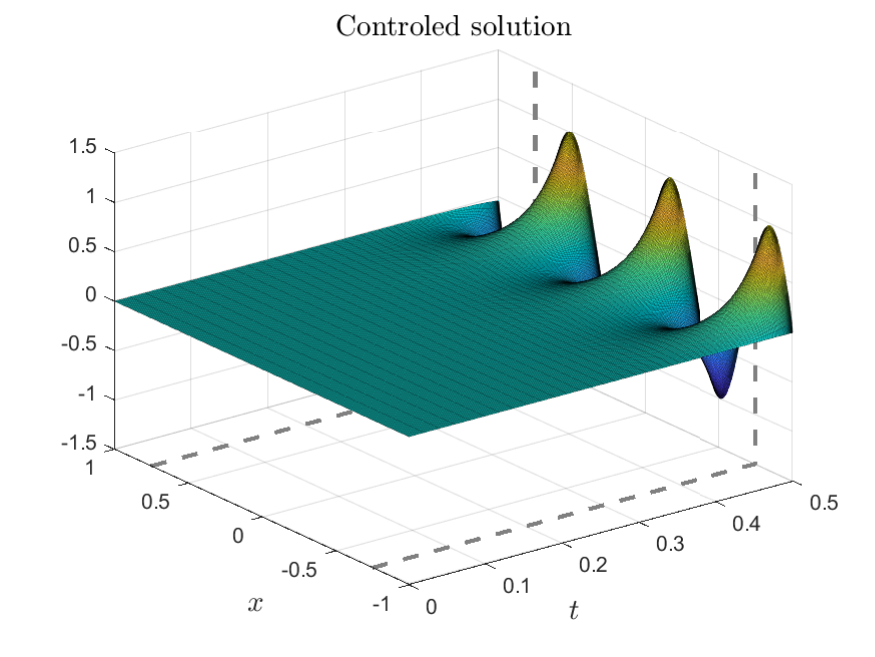}
\end{center}~\\[-10pt]
\caption{Control and controlled solution starting at $\tau=0.5$.}
\label{fig201}
\end{figure}

However, if the time in which the solution is controlled is less,
the control starts to act immediately to reduce the state at $t=0$ as much as possible, as shown in Figure \ref{fig234}.
\begin{figure}[!h]
\begin{center}
\includegraphics[scale=0.5]{./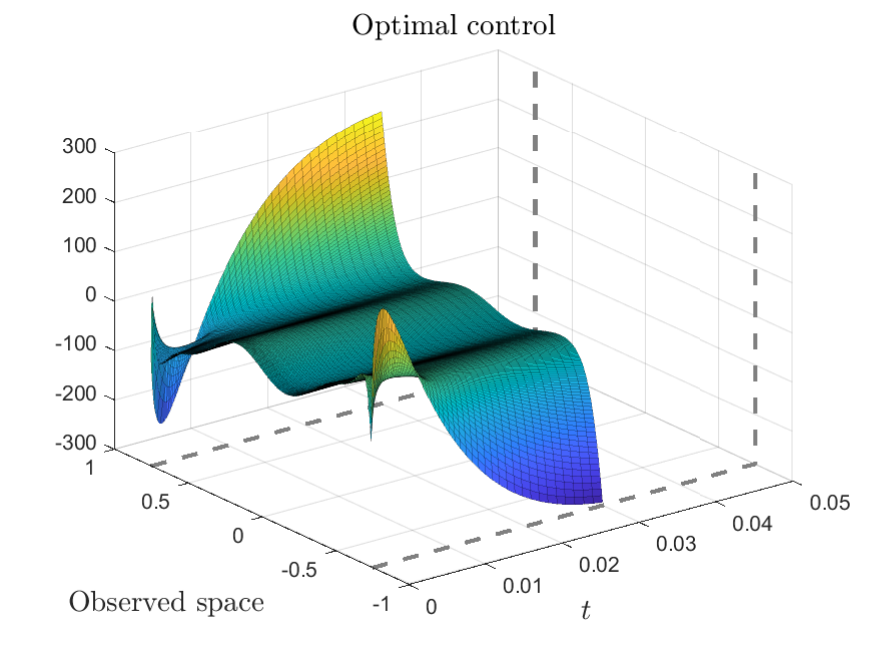}
\includegraphics[scale=0.5]{./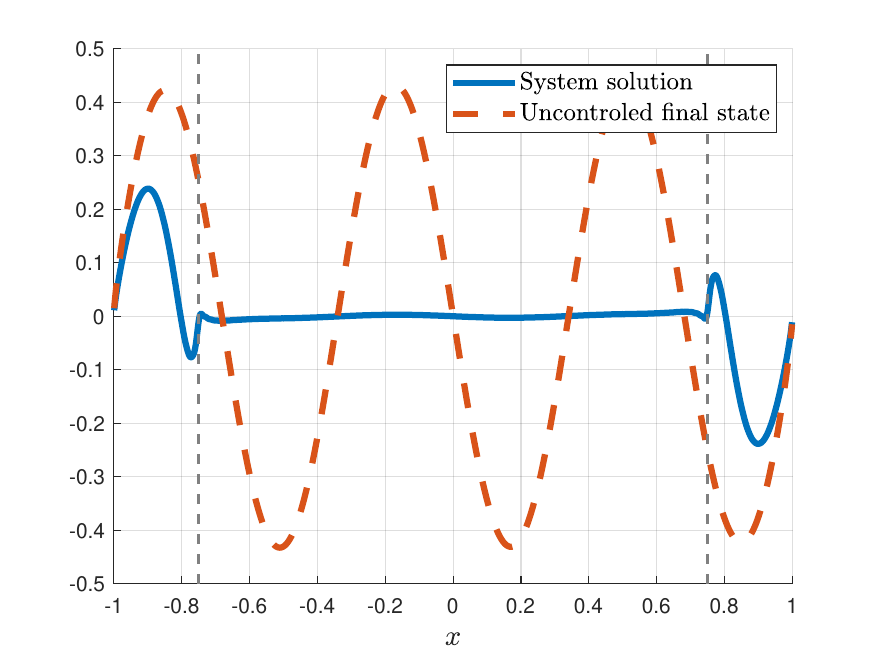}
\end{center}~\\[-10pt]
\caption{Control and controlled final state starting at $\tau=0.03$.}
\label{fig234}
\end{figure}

It is necessary to evaluate whether the control obtained from $w_0$, effectively causes the solution of the control problem to satisfy $\phi(\cdot,0)=\varepsilon^{-1}w_0$,  when calculating $p_{\varepsilon}$ and $h_{\varepsilon}$ of the optimal system \eqref{control-3}-\eqref{dual-1}, the result is as expected. There are notable differences if it is calculated with finite elements instead of partial sums, when $\tau$ is sufficiently large.
\begin{figure}[!h]
\begin{center}
\includegraphics[scale=0.5]{./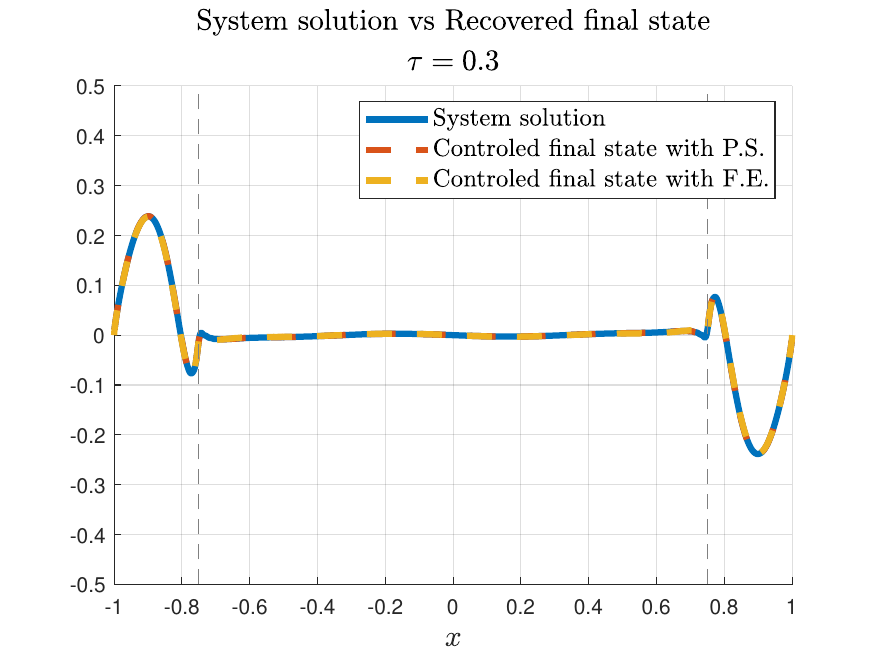}
\includegraphics[scale=0.5]{./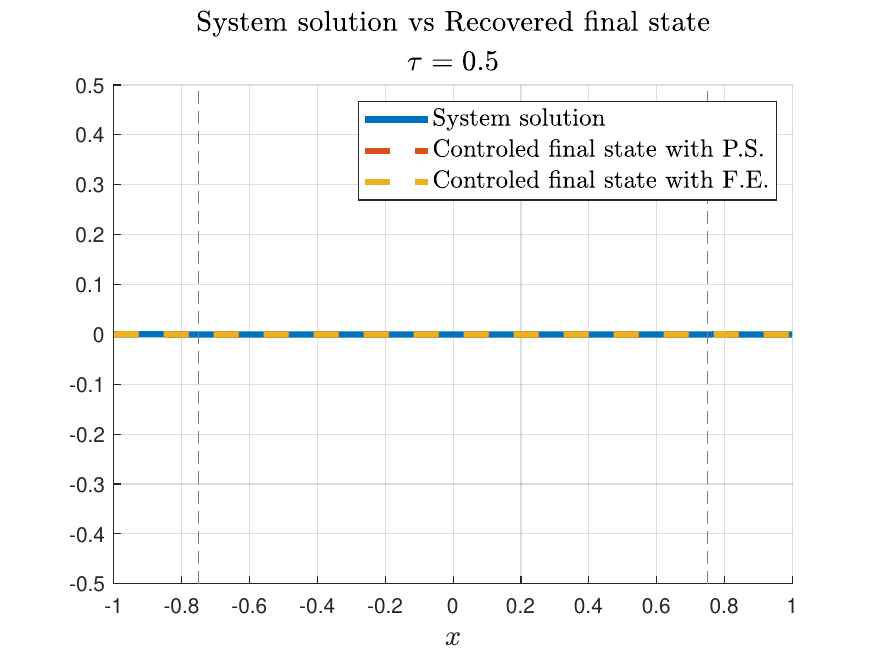}
\end{center}~\\[-10pt]
\caption{Recovery of $\phi(\cdot,0)=\varepsilon^{-1} w_0$ solving fractional heat equation with FEM and Partial Sums.}
\label{fig256}
\end{figure}

To conclude the control problem, it is necessary to evaluate the behavior of $h_\varepsilon$ and $\phi(\cdot,0)$ when $\varepsilon\to\infty$. As discussed in Section \ref{sec-4}, it is to be expected that $\|\phi(\cdot,0)\|_{L^2(\Omega)}\to 0$. However, in \cite{biccari2019controllability}, we require an increasingly finer discretization when $\varepsilon$ increases. That said, using $N = 5000$ and $\tau=0.03$ we can appreciate in Figure \ref{fig278} how the norm decreases, but requires $\varepsilon$ to diverge faster and faster. Moreover, together with the convergence of $\phi(\cdot,0)$, we can appreciate the growth in the $L^2(\omega\times(0,\tau))$ and $L^\infty (\omega\times(0,\tau))$ control norms.
\begin{figure}[!h]
\begin{center}
\includegraphics[scale=0.5]{./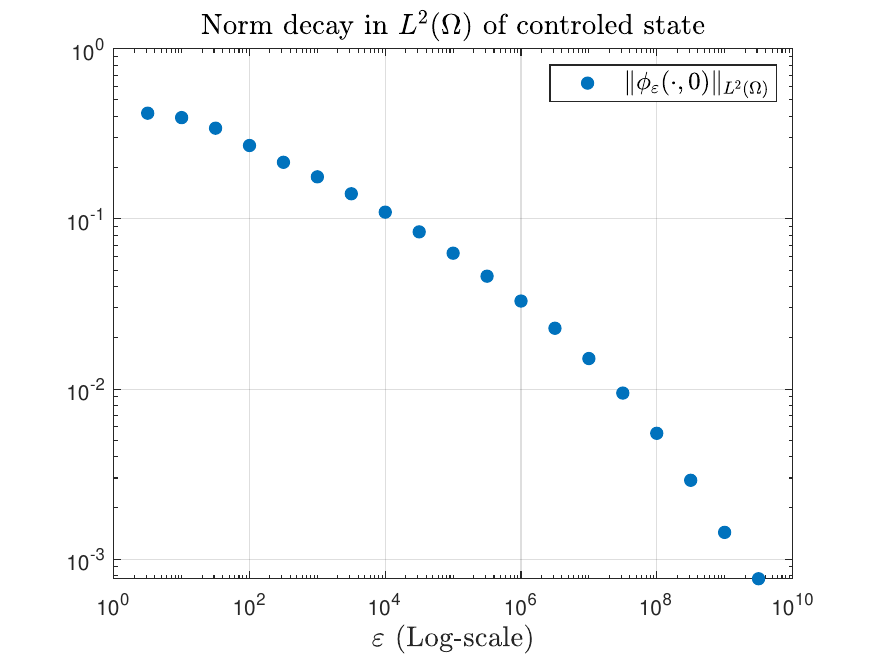}
\includegraphics[scale=0.5]{./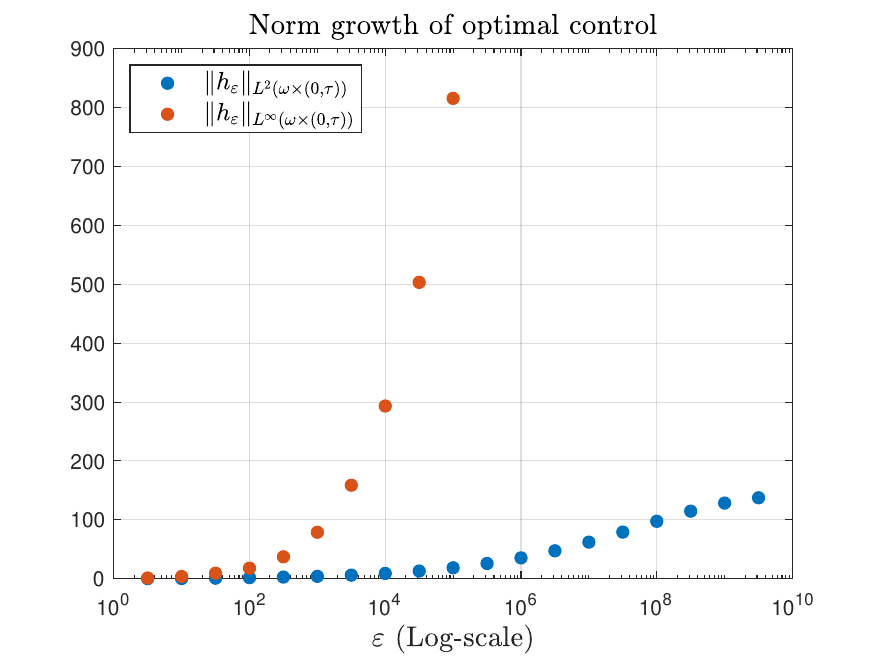}
\end{center}~\\[-10pt]
\caption{Norm trends of the control and the controlled final state.}
\label{fig278}
\end{figure}

Considering the different variables involved in the control, for a fixed $\widetilde{M}$ it has been decided to choose the optimization parameter $\varepsilon$ was chosen as follows:
\begin{align*}
\widetilde{\varepsilon}=\inf\left\{\varepsilon>0: \ \left(\max_{n\leq \widetilde{n}} \|\phi_{\varepsilon,n}^{(\tau_1)}(\cdot,0)\|_{L^2(\omega)} \right) <m(\omega) \times 1\% \right\},
\end{align*}
where $\phi_{\varepsilon,n}^{(\tau_1)}$ are the controlled solutions of the optimal control problem with control $h_\varepsilon^{(\tau_1)}$ and the conditions $\phi_{\varepsilon,n}^{(\tau_1)}(\cdot,\tau_1)=\varphi_n$ and $m(\omega)$ is the measure of the observation set $\omega$. This for the configuration used thus far and $\omega= (-0,75,0,75)$ results that $10^{3,9}<\widetilde{\varepsilon} \leq 10^4.$

\subsection{Volterra Equation}

The next step involves solving Volterra equations. Figure \ref{fig301} shows how the control solutions presented in Figures \ref{fig201} and \ref{fig234} affect the integral solution \( \theta(\tau)_\varepsilon \) for a small \( \tau \) and \( \tau = 0.5 \), indicating that its impact is minimal when \( \tau \) is sufficiently large.
\begin{figure}[!h]
\begin{center}
\includegraphics[scale=0.5]{./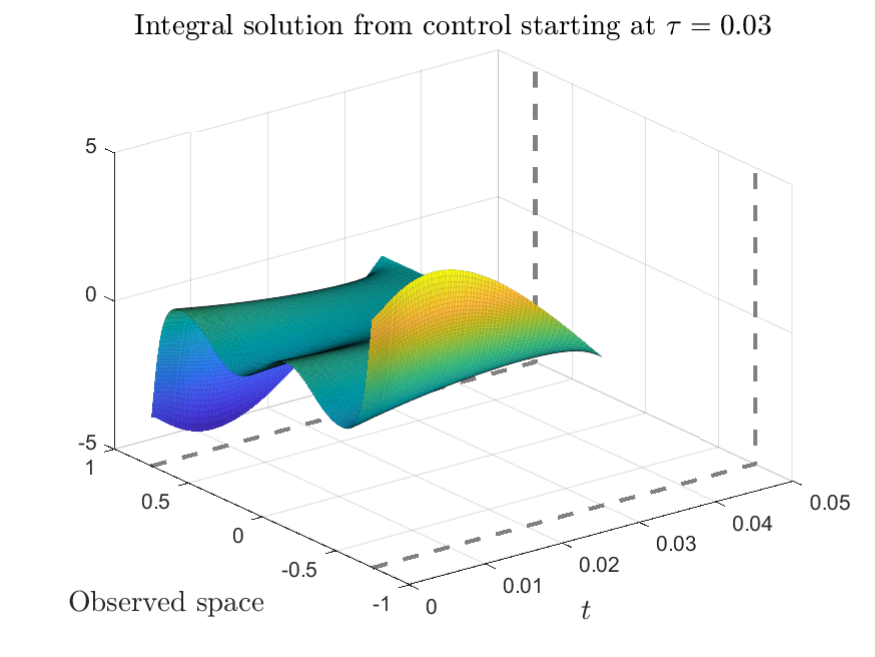}
\includegraphics[scale=0.5]{./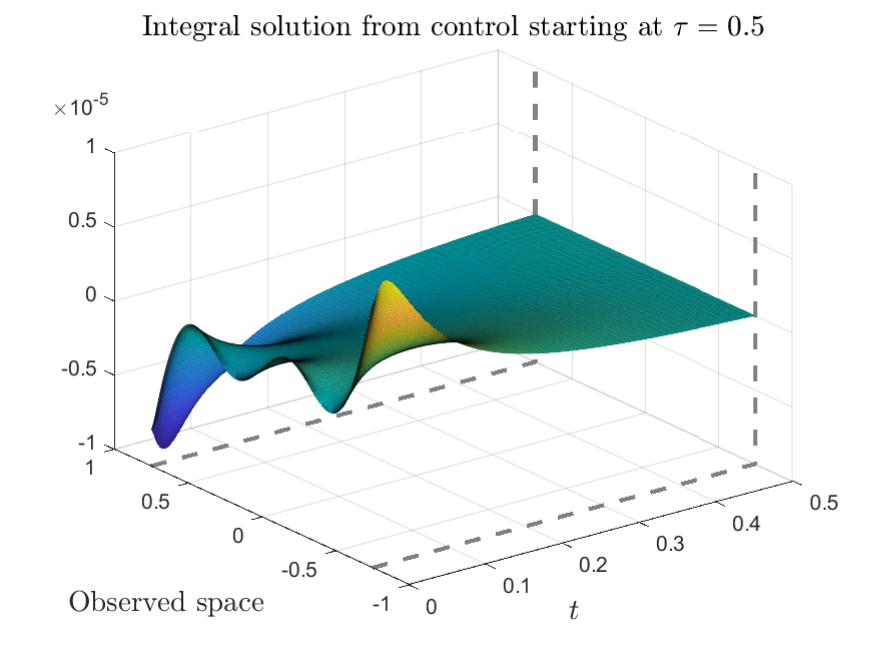}
\end{center}~\\[-10pt]
\caption{Solution to Volterra's equation.}
\label{fig301}
\end{figure}

In addition, as the controls reach higher values when \( \varepsilon \to +\infty \), Figure \ref{fig31x} shows that the integral solution exhibits a similar trend, albeit less drastic.
\begin{figure}[!h]
\begin{center}
\includegraphics[scale=0.5]{./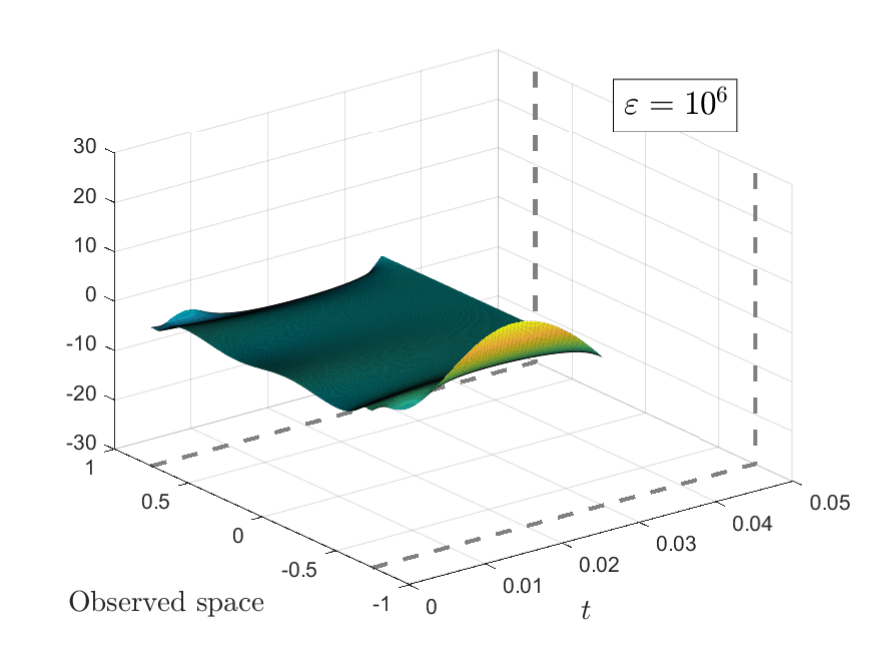}
\includegraphics[scale=0.5]{./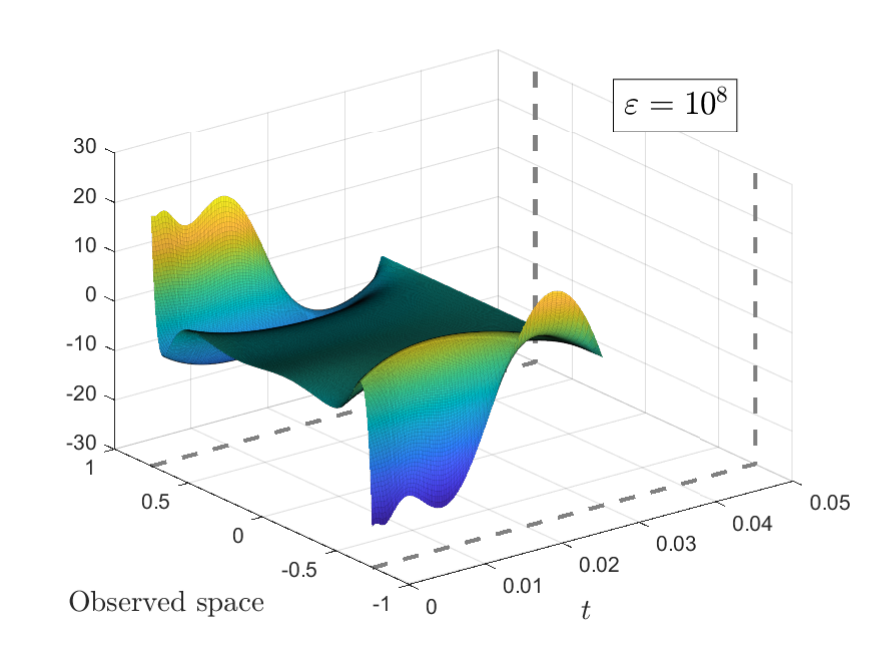}
\end{center}~\\[-10pt]
\caption{Integral solutions for increasing $\varepsilon$ and $\tau=0.03$.}
\label{fig31x}
\end{figure}

\subsection{Results}

After analyzing each calculation stage, we computed the Fourier coefficients of $ f $ using the formulas presented in this article. However, using the configuration: $ N = 1500$,  $ M = 10000$, and $f_M = 100$  requires significant RAM. Therefore, the number of points was reduced to \( N = 500 \) and \( M = 1000 \), allowing us to approximate the first 50 Fourier coefficients much faster and with good results, as shown in Figure \ref{fig3x}, where the first formula from Theorem \ref{th1} and an optimization parameter \( \varepsilon = 10^4 \) were used.
\begin{figure}[!pt]
\begin{center}
\includegraphics[scale=0.4]{./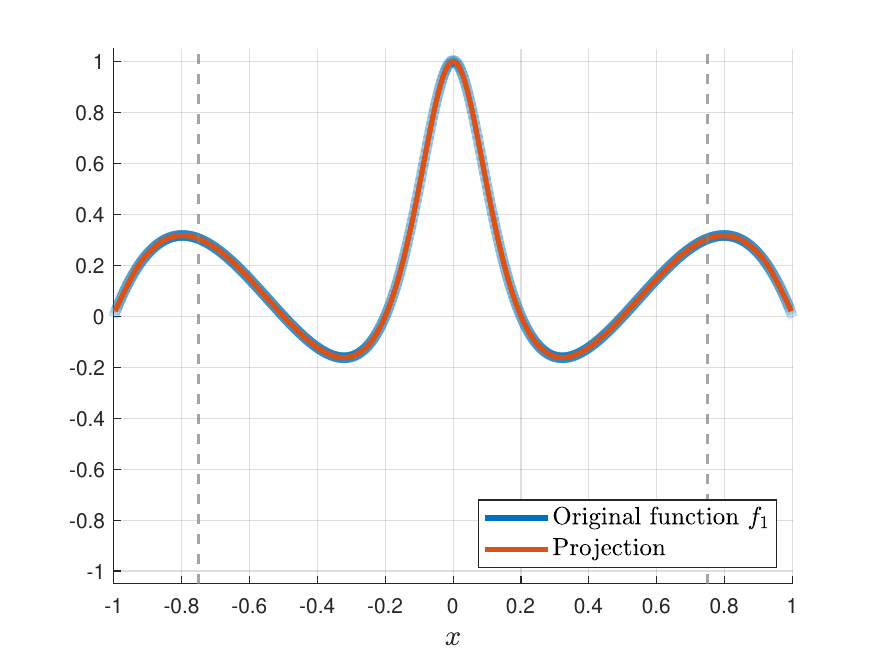}
\includegraphics[scale=0.4]{./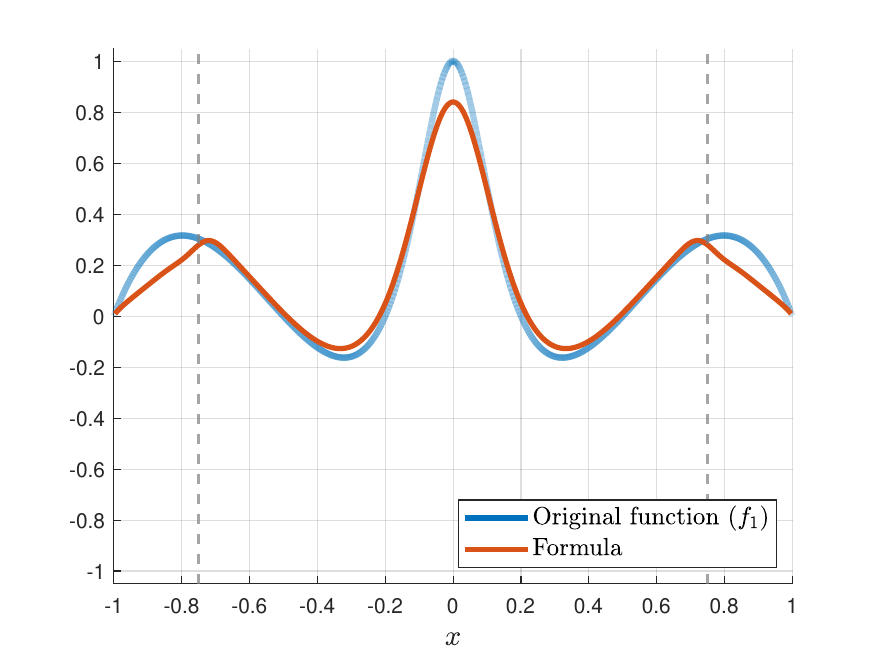}
\includegraphics[scale=0.4]{./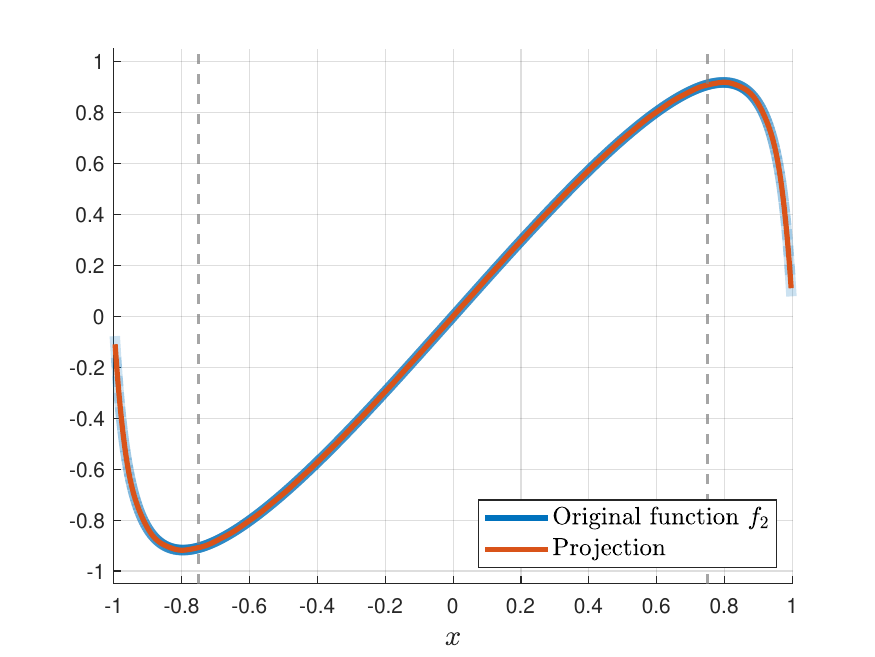}
\includegraphics[scale=0.4]{./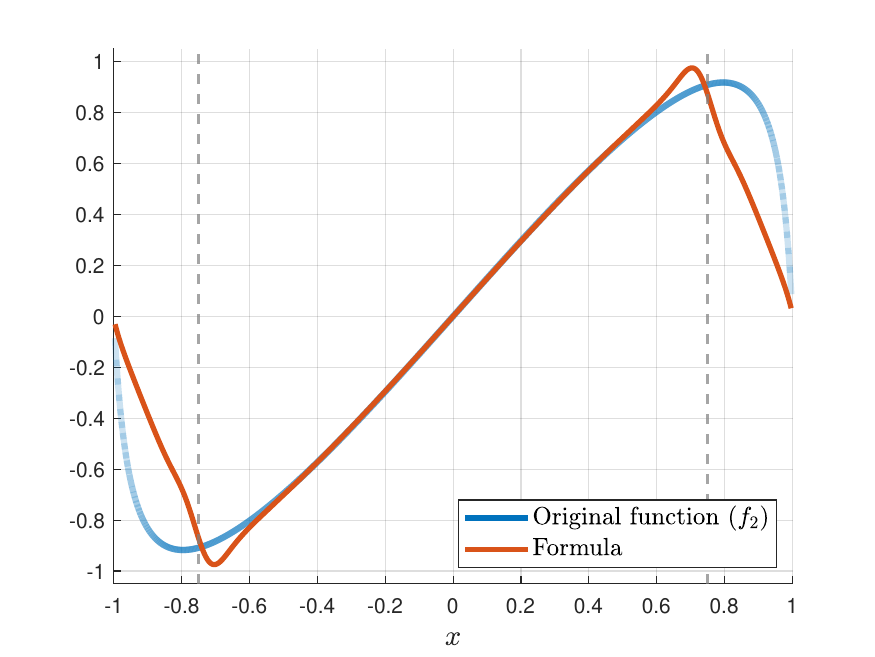}
\includegraphics[scale=0.4]{./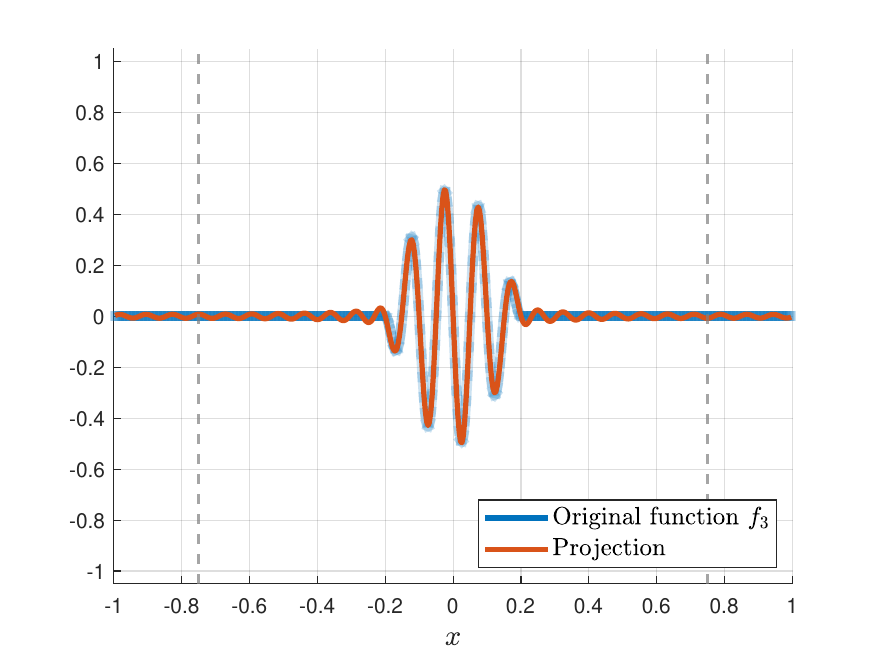}
\includegraphics[scale=0.4]{./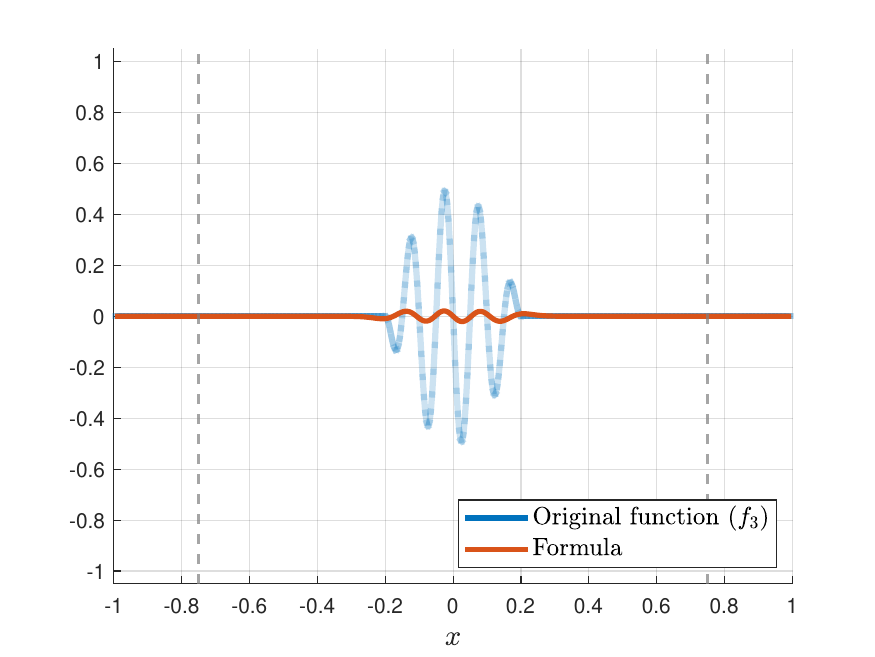}
\includegraphics[scale=0.4]{./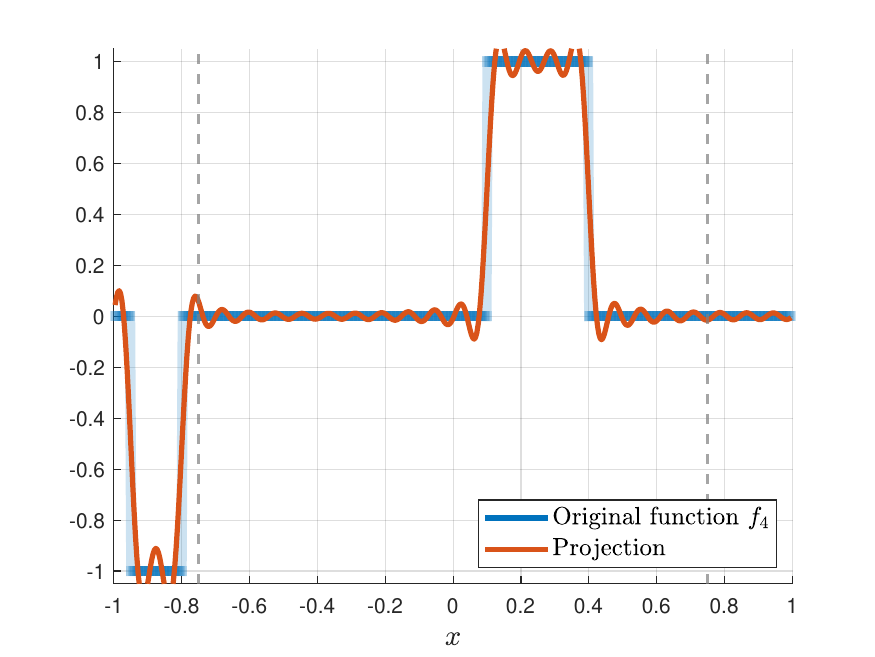}
\includegraphics[scale=0.4]{./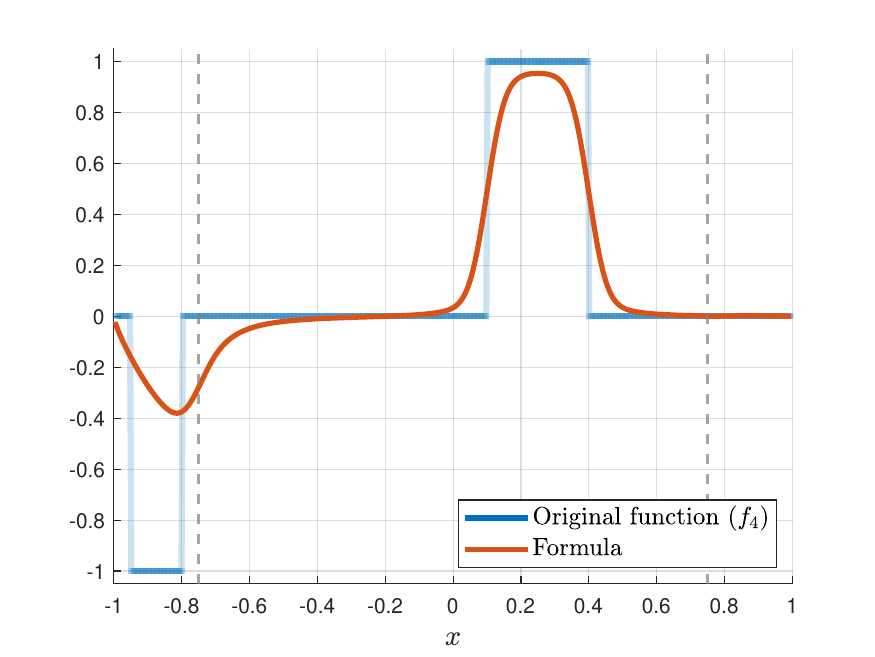}
\end{center}~\\[-10pt]
\caption{Reconstruction of the spatial part of the source, from the observation interval delimited with the dashed lines, where the left column is $f$ and its projection with 50 Fourier coefficients and in the right column the recovered projection.}
\label{fig3x}
\end{figure}

In each case, we observed that the support and shape of the function were captured accurately. Despite more errors in the third function, all the cases correctly approximated the support and shape of the original function.

It is interesting to note that, despite correctly recovering the support and shape of the function, without oscillating too much (as with $f_3$ and $f_4$), the Fourier coefficients are not captured well. For example, in Figure \ref{fig41}, we see that, for $f_2$, coefficients 8, 9, 10, and 11 have opposite signs.
\begin{figure}[!h]
\begin{center}
\includegraphics[scale=0.5]{./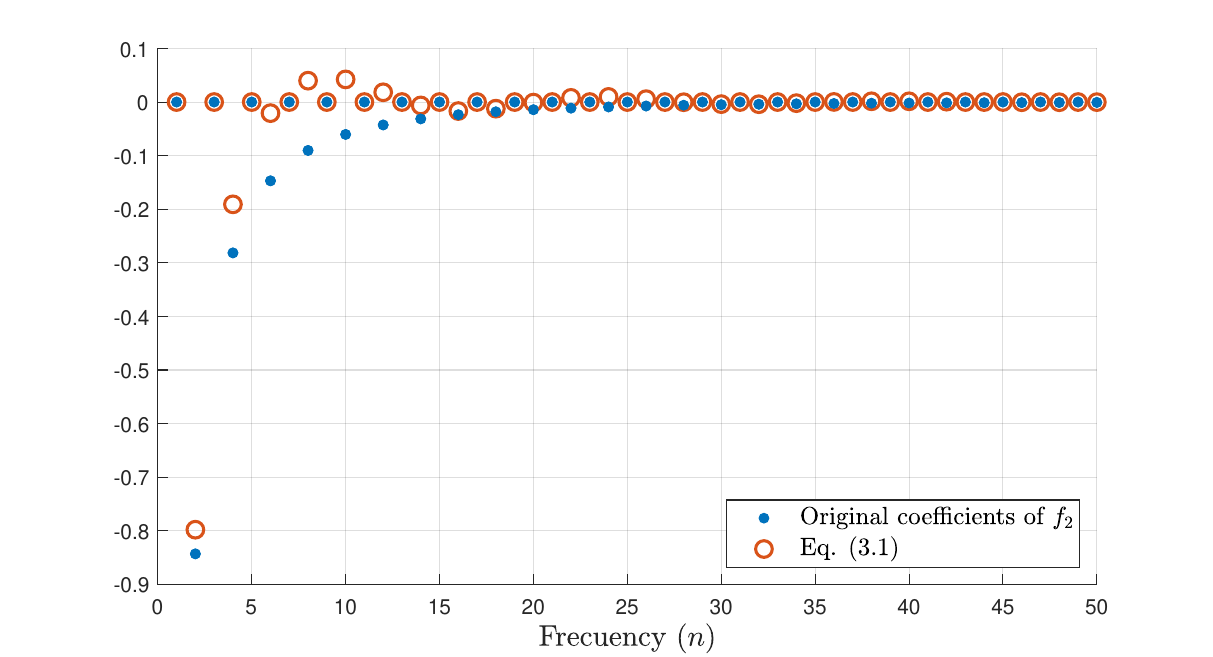}
\end{center}~\\[-10pt]
\caption{Fourier coefficients of $f_2$.}
\label{fig41}
\end{figure}

For $f_4$, this sign change is repeated in some coefficients, and in Figure \ref{fig4023}, we see how some high frequencies approach zero, when in fact this is not the case. Despite this, for $f_1$ and $f_2$ the calculated coefficients retain their signs and are distinguishably non-zero if the original coefficients are.
\begin{figure}[!pt]
    \centering
    \begin{subfigure}{0.48\textwidth}
        \centering
        \includegraphics[width=\linewidth]{./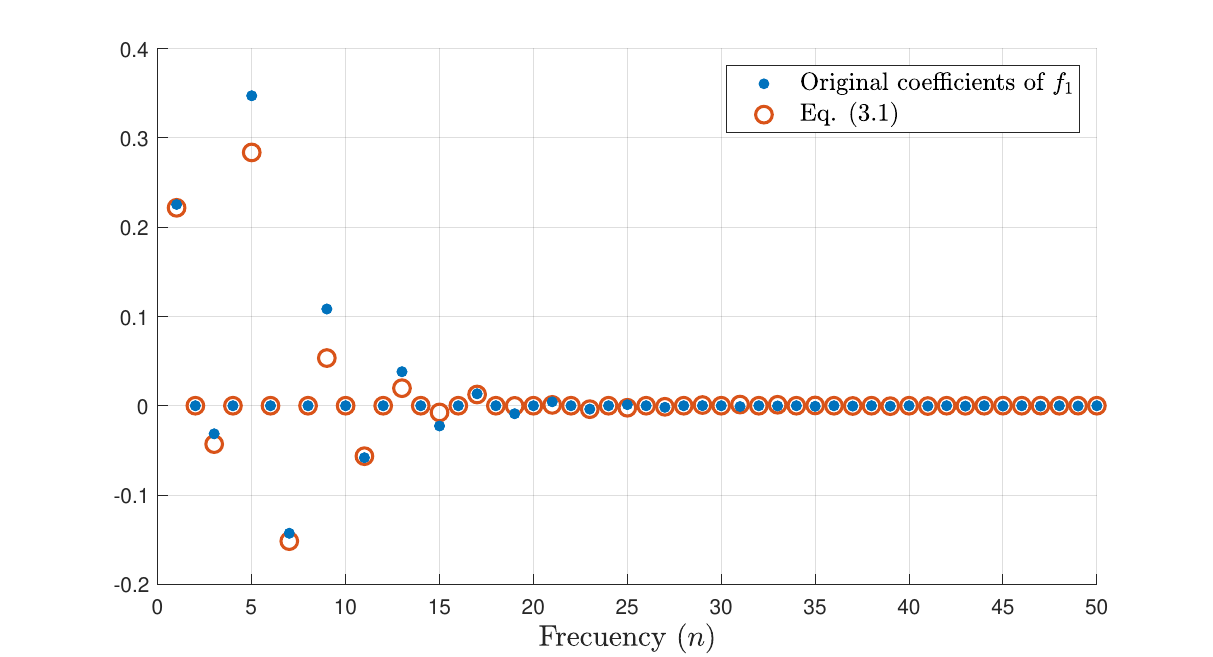}
        \caption{Fourier coefficients of $f_1$.}
        \label{fig:f1_coeffs}
    \end{subfigure}
    \hfill
    \begin{subfigure}{0.48\textwidth}
        \centering
        \includegraphics[width=\linewidth]{./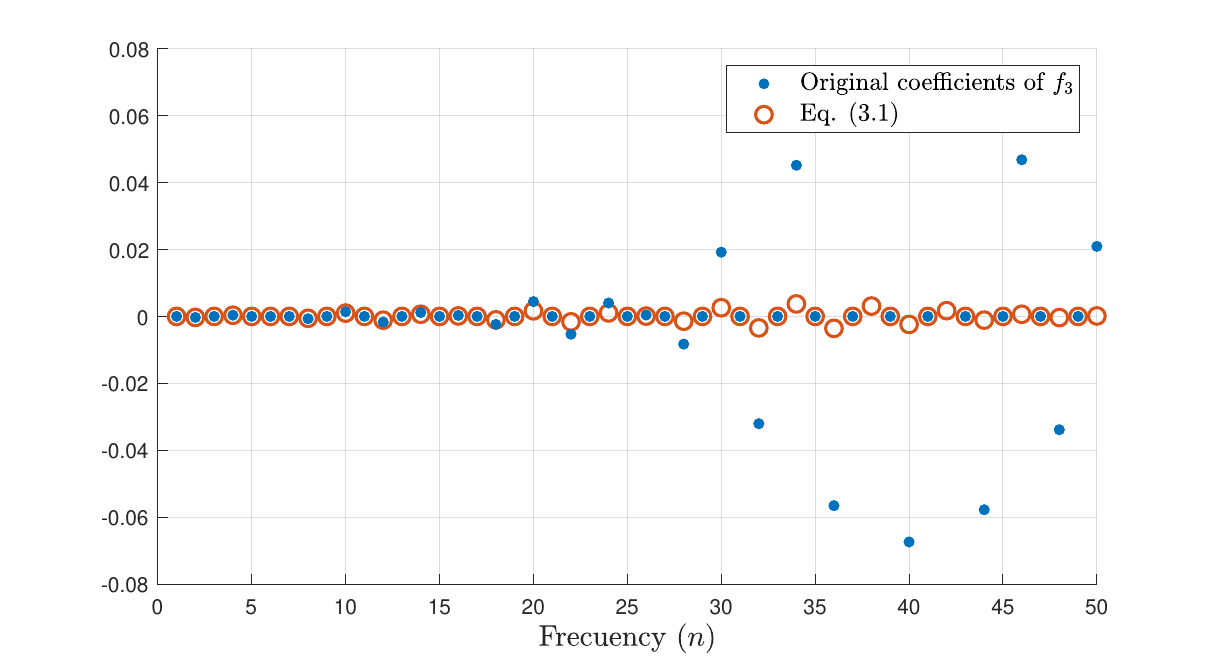}
        \caption{Fourier coefficients of $f_3$.}
        \label{fig:f3_coeffs}
    \end{subfigure}
    \\[10pt] 
    \begin{subfigure}{0.48\textwidth}
        \centering
        \includegraphics[width=\linewidth]{./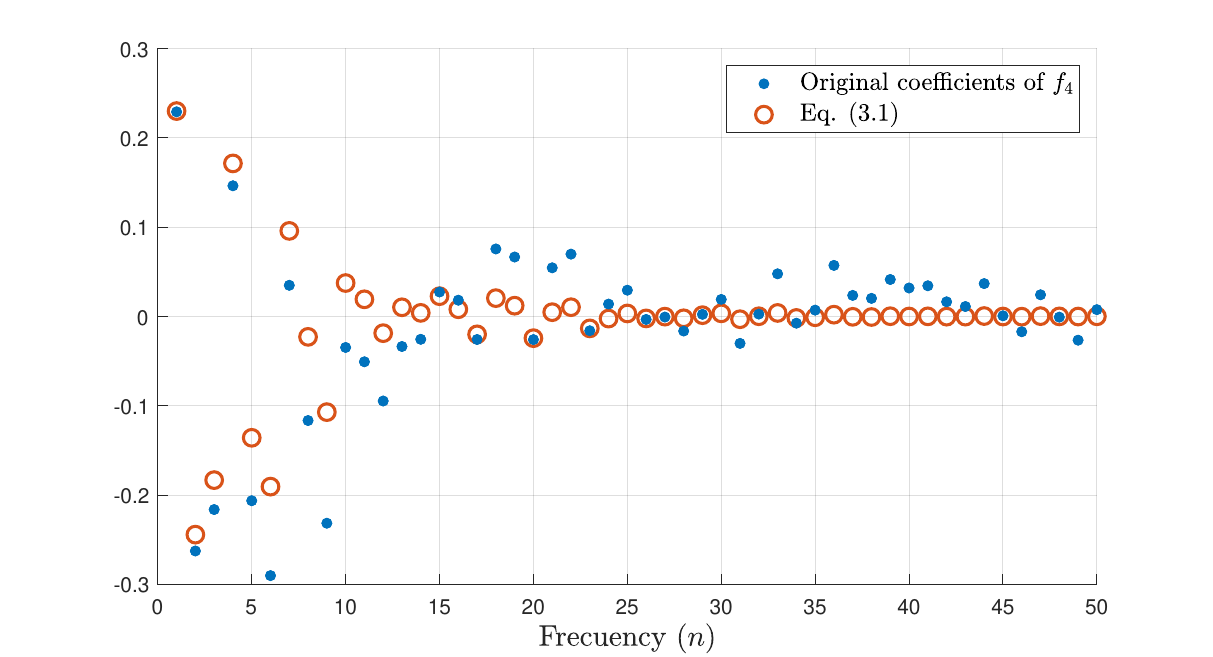}
        \caption{Fourier coefficients of $f_4$.}
        \label{fig:f4_coeffs}
    \end{subfigure}
      \caption{Comparison of the exact and reconstructed Fourier coefficients for the source functions (a) $f_1$, (b) $f_3$, and (c) $f_4$. The charts for $f_1$ and $f_4$ show good agreement in sign and magnitude for low frequencies, while the chart for $f_3$ shows more significant errors in capturing the correct coefficients.}
    \label{fig4023}
\end{figure}

When comparing the formulas of Theorems \ref{th1} and \ref{th2}, in Figure \ref{fig:all_fourier_coeffs}, we see how the same factors are recovered (with minimal differences in approximation). However, it is necessary to emphasize that the factor $c_n$ of Theorem \ref{th2} has been calculated not with the same expression, but after a change in the variable
\begin{align*}
c_n=e^{-\lambda_n T}\sigma(0)+\int_0^T e^{\lambda_n(s-T)}\sigma'(s)ds.
\end{align*}
\begin{figure}[!h]
    \centering
    \begin{subfigure}{0.48\textwidth}
        \centering
        \includegraphics[width=\linewidth]{./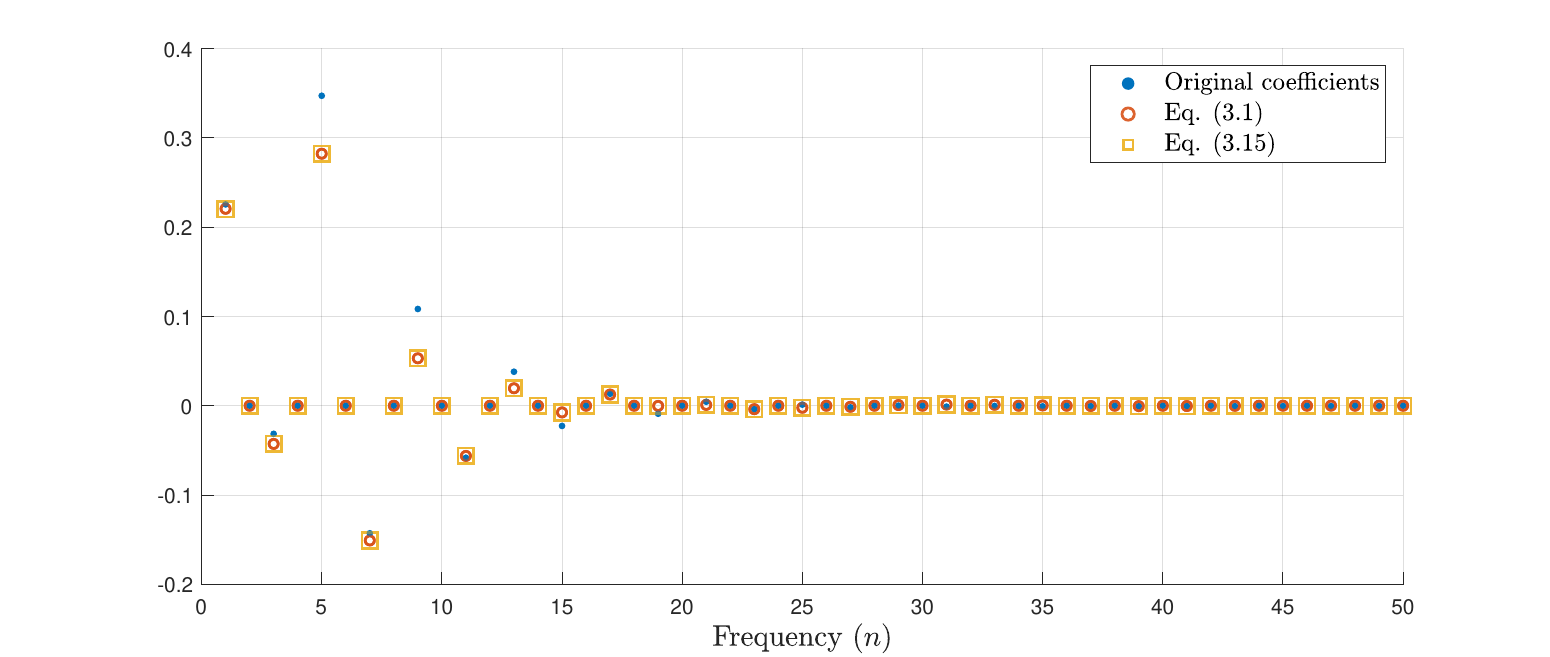}
        \caption{Fourier coefficients of $f_1$.}
        \label{fig:f1_coeffs_2}
    \end{subfigure}
    \hfill
    \begin{subfigure}{0.48\textwidth}
        \centering
        \includegraphics[width=\linewidth]{./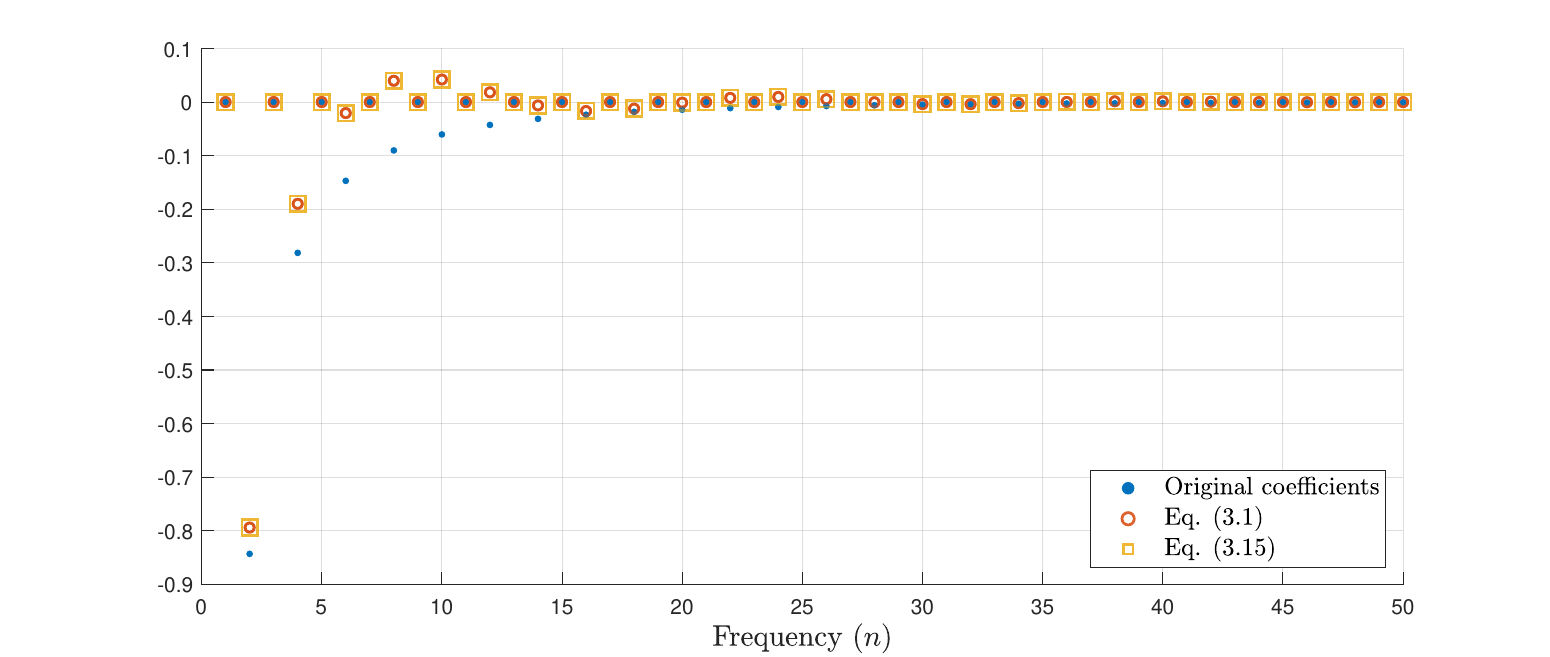}
        \caption{Fourier coefficients of $f_2$.}
        \label{fig:f2_coeffs_2}
    \end{subfigure}
    
    \vspace{0.5cm} 
    \begin{subfigure}{0.48\textwidth}
        \centering
        \includegraphics[width=\linewidth]{./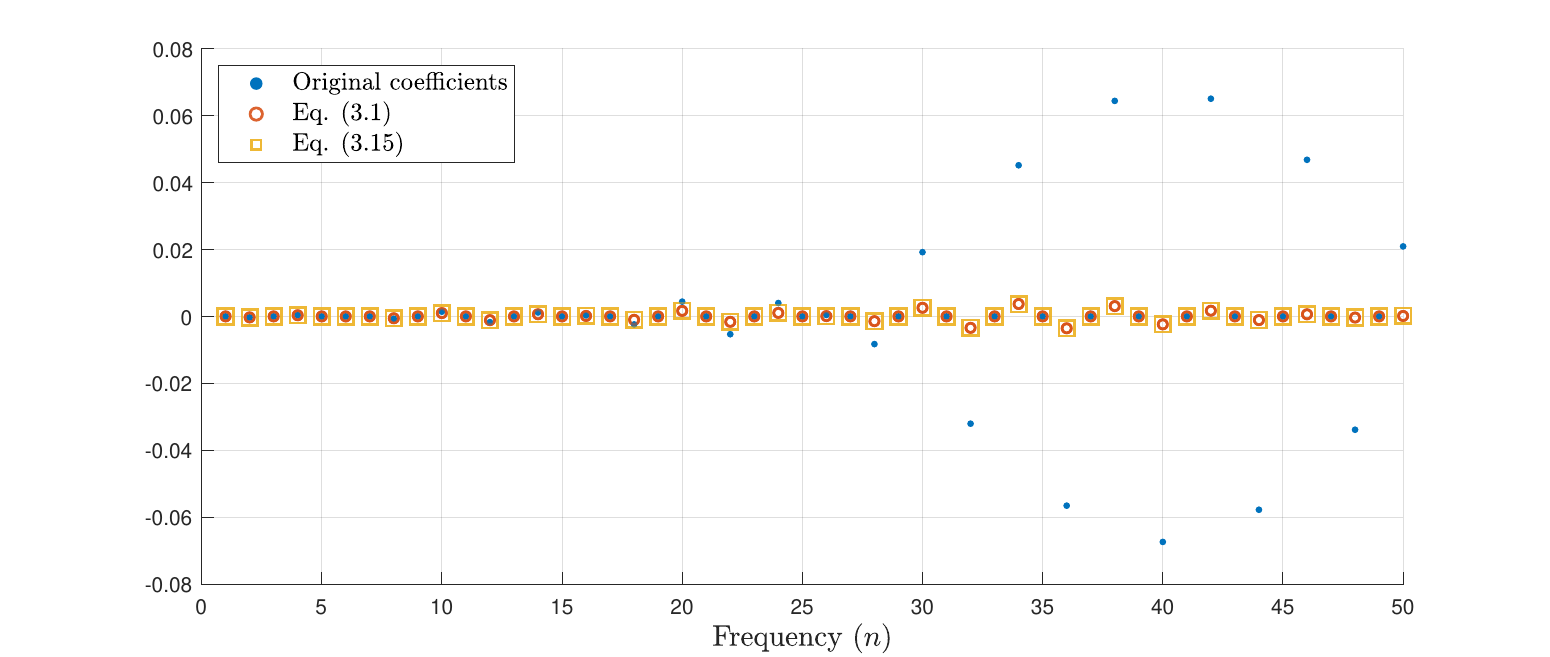}
        \caption{Fourier coefficients of $f_3$.}
        \label{fig:f3_coeffs_2}
    \end{subfigure}
    
    \caption{Comparison of the exact and reconstructed Fourier coefficients for the source functions (a) $f_1$, (b) $f_2$, and (c) $f_3$. This arrangement provides sufficient width for each chart to clearly display the coefficient indices and values, facilitating comparison of the reconstruction accuracy across different function types.}
    \label{fig:all_fourier_coeffs}
\end{figure}

Finally, we analyzed the recovery of the function when using different functions $\sigma$, which are illustrated in Figure \ref{fig70} together with their derivatives:
\begin{align*}
\sigma_1(t)=1, \quad \sigma_2(t)=\cos(10t),	\quad \sigma_3(t)=(1-t)^2.
\end{align*}
\begin{figure}[!h]
\begin{center}
\includegraphics[scale=0.32]{./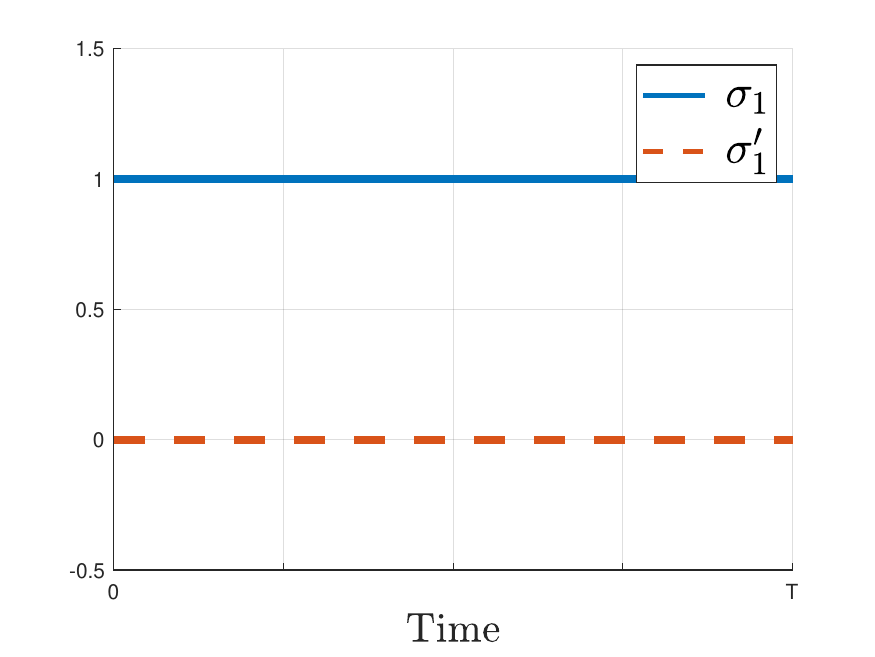}
\includegraphics[scale=0.32]{./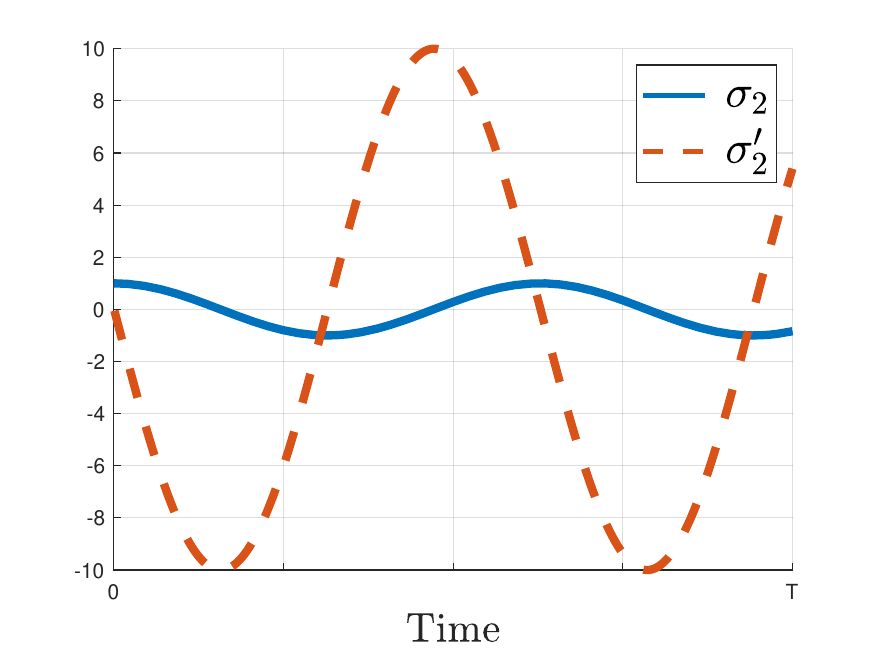}
\includegraphics[scale=0.32]{./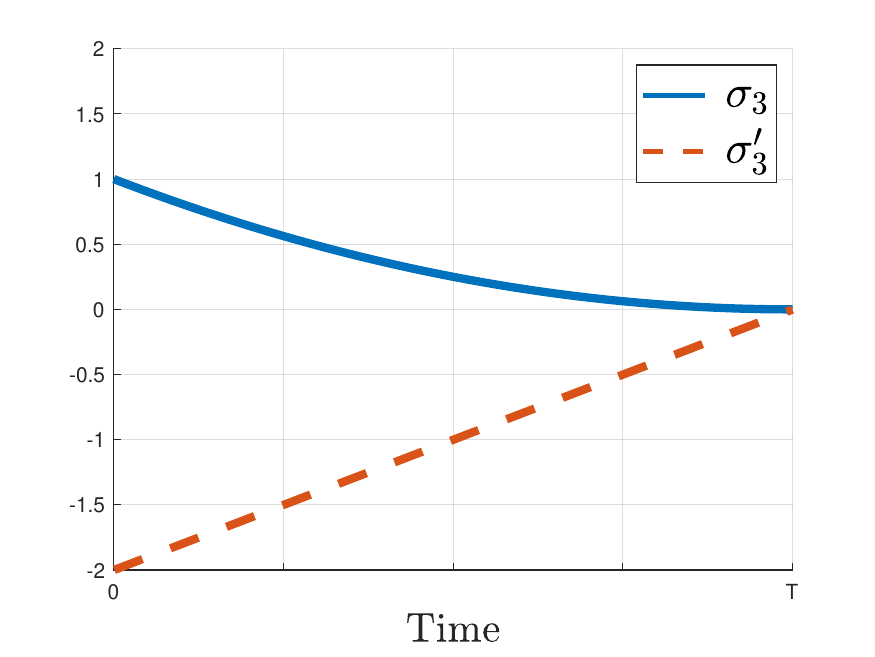}
\end{center}~\\[-10pt]
\caption{Test functions for $\sigma$.}
\label{fig70}
\end{figure}

Clearly, in the case $\sigma=\sigma_3$ we cannot use the formula of Theorem \ref{th1}, therefore, we used that of Theorem \ref{th2}. As shown in Figure \ref{fig70xy},  only in this case there was a noticeable difference in the recovery of $f$.
\begin{figure}[!h]
\begin{center}
{\Large \hspace{0.75em}$\sigma=\sigma_1$\hspace{6.9em}$\sigma=\sigma_2$\hspace{6.9em}$\sigma=\sigma_3$}\\
\includegraphics[scale=0.32]{./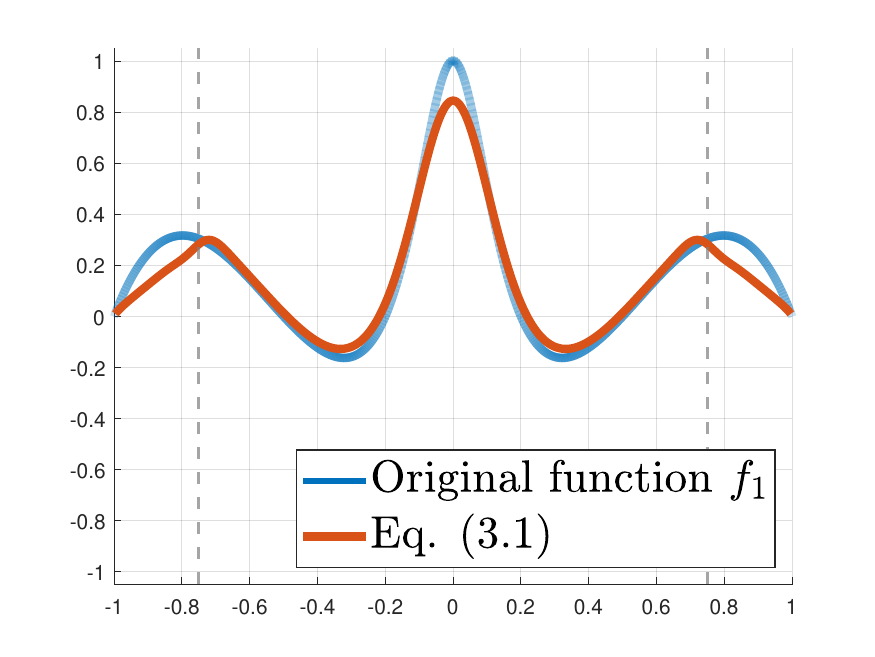}
\includegraphics[scale=0.32]{./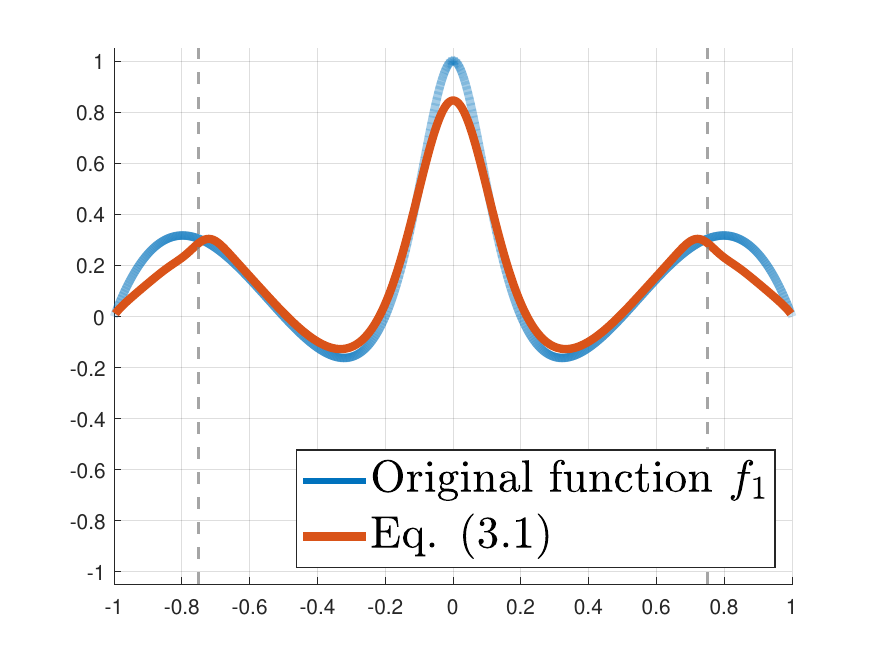}
\includegraphics[scale=0.32]{./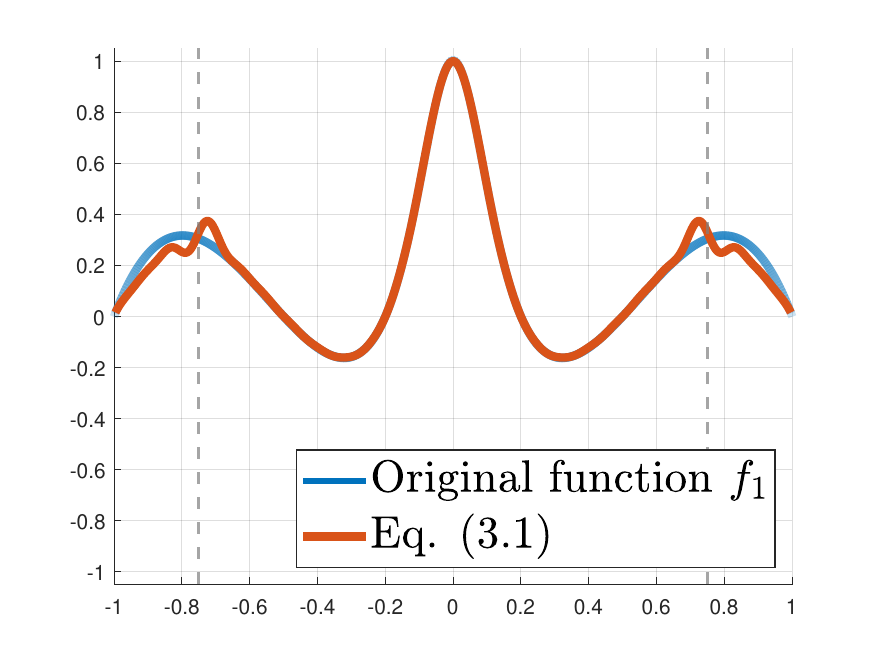}
\includegraphics[scale=0.32]{./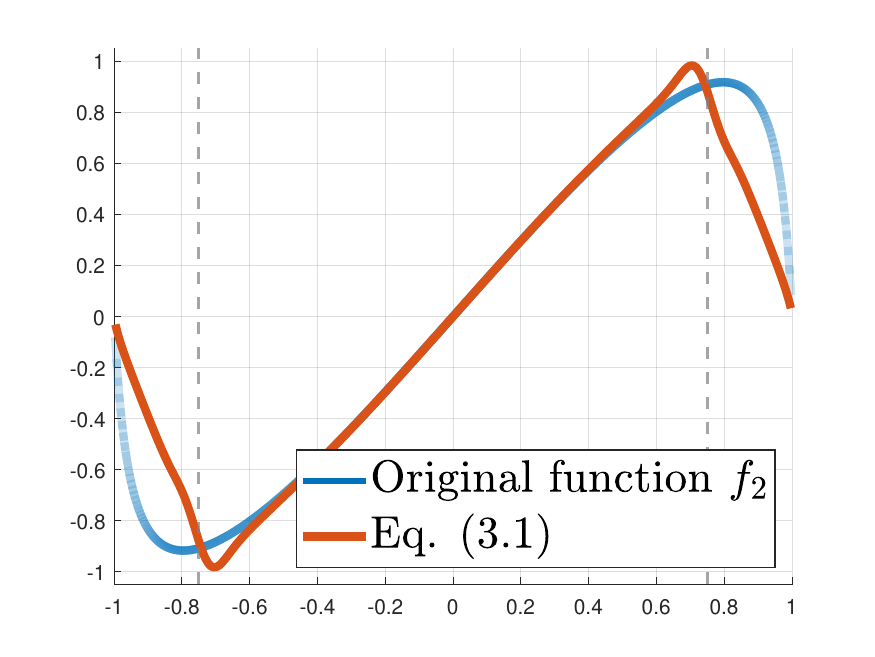}
\includegraphics[scale=0.32]{./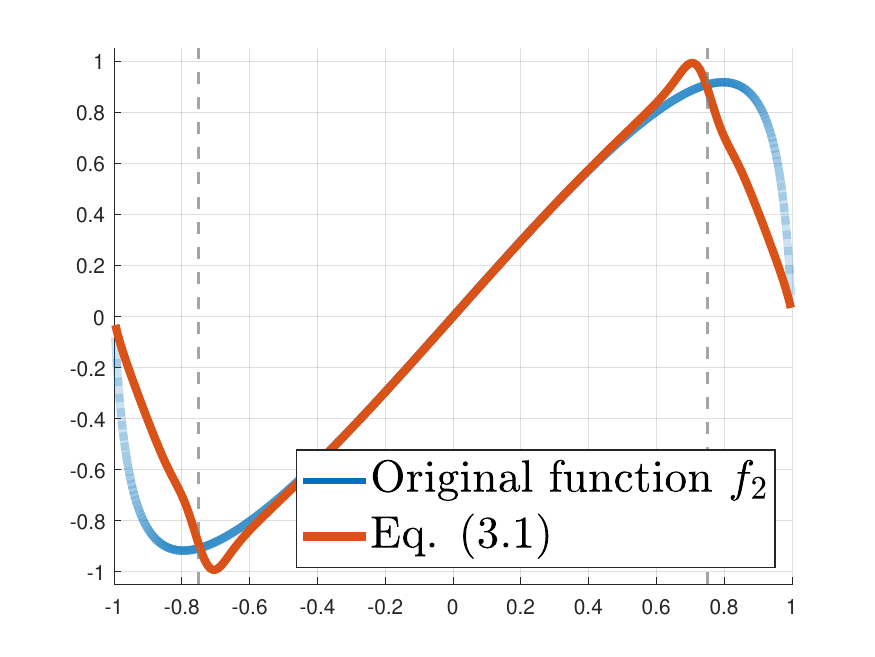}
\includegraphics[scale=0.32]{./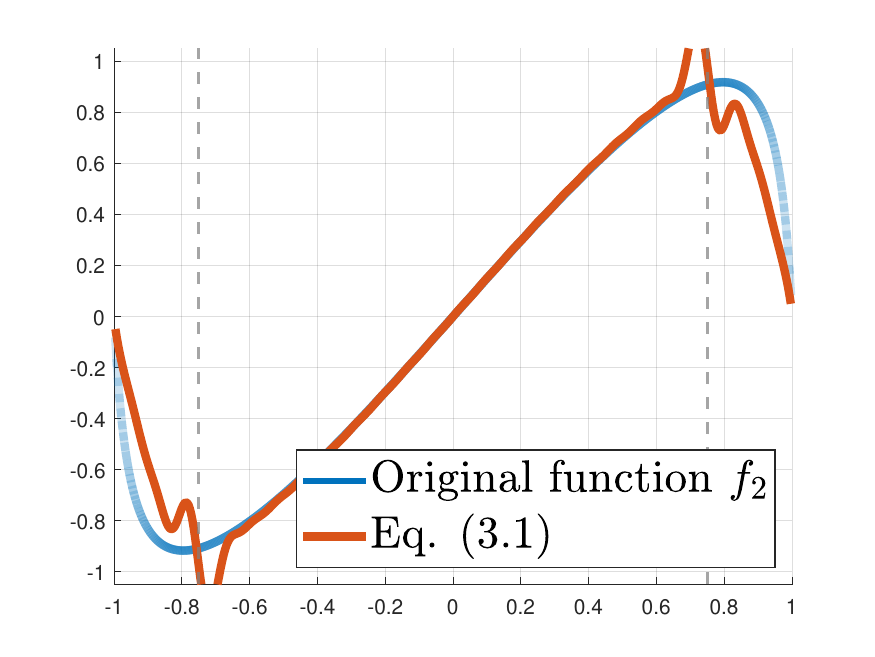}
\includegraphics[scale=0.32]{./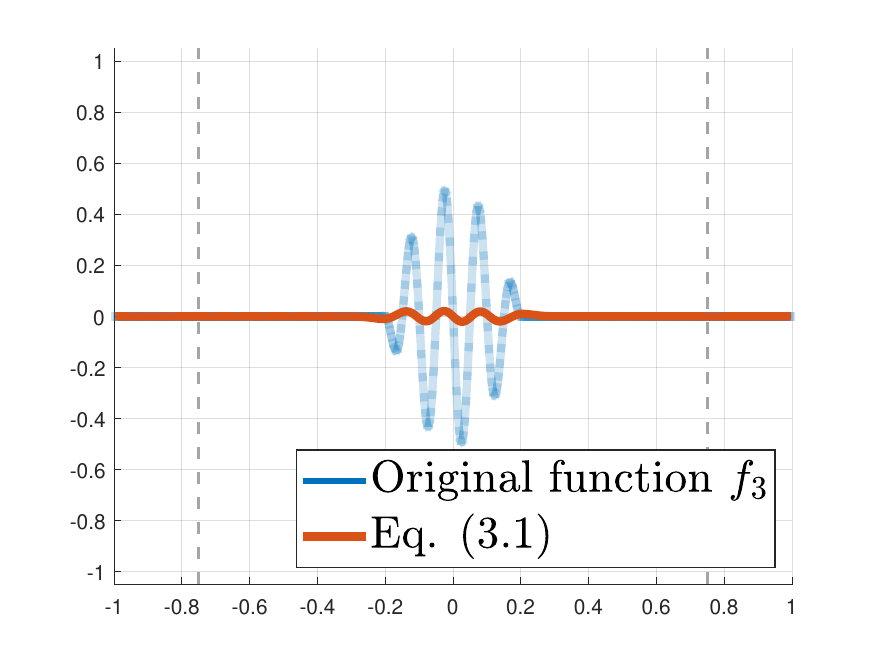}
\includegraphics[scale=0.32]{./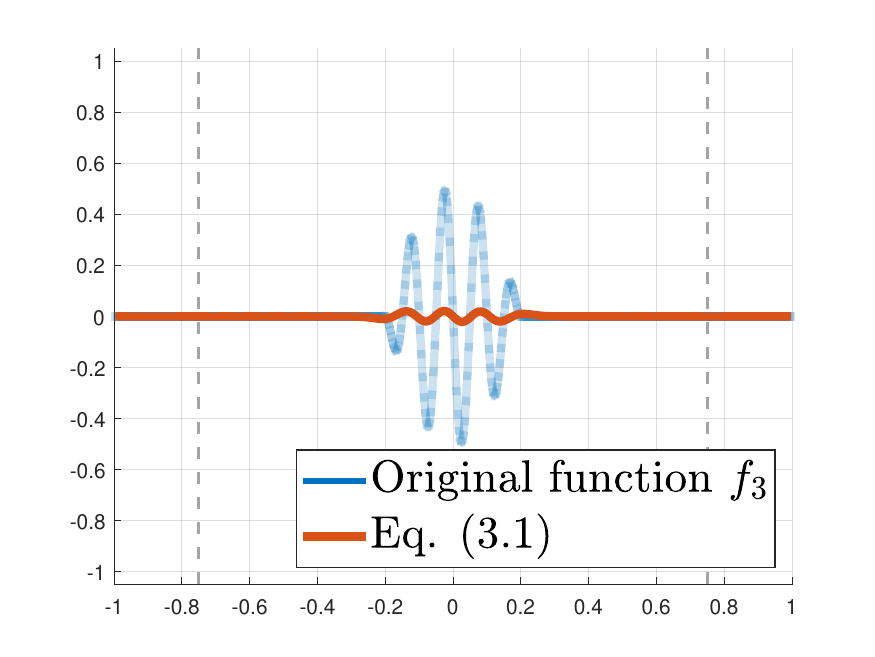}
\includegraphics[scale=0.32]{./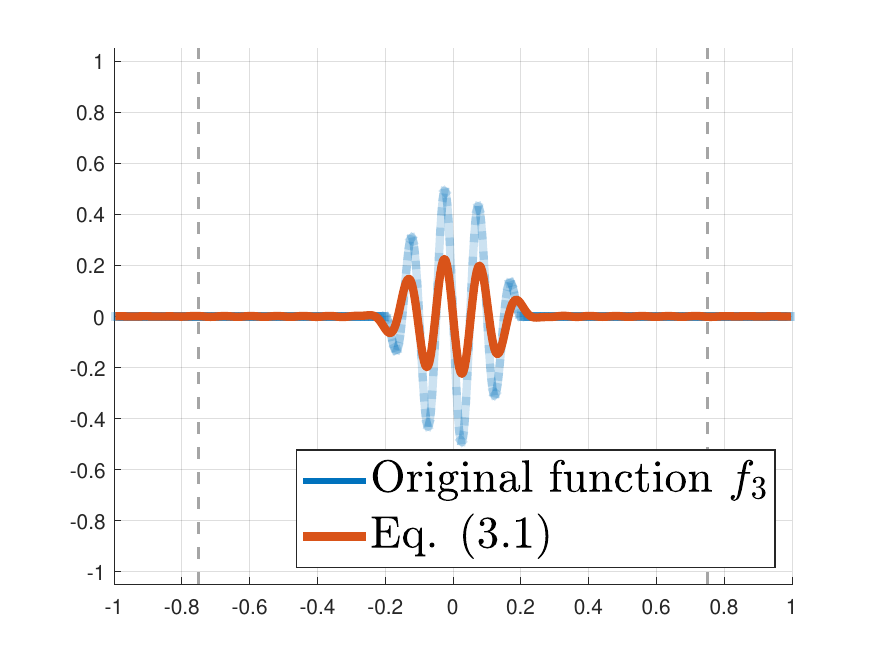}
\includegraphics[scale=0.32]{./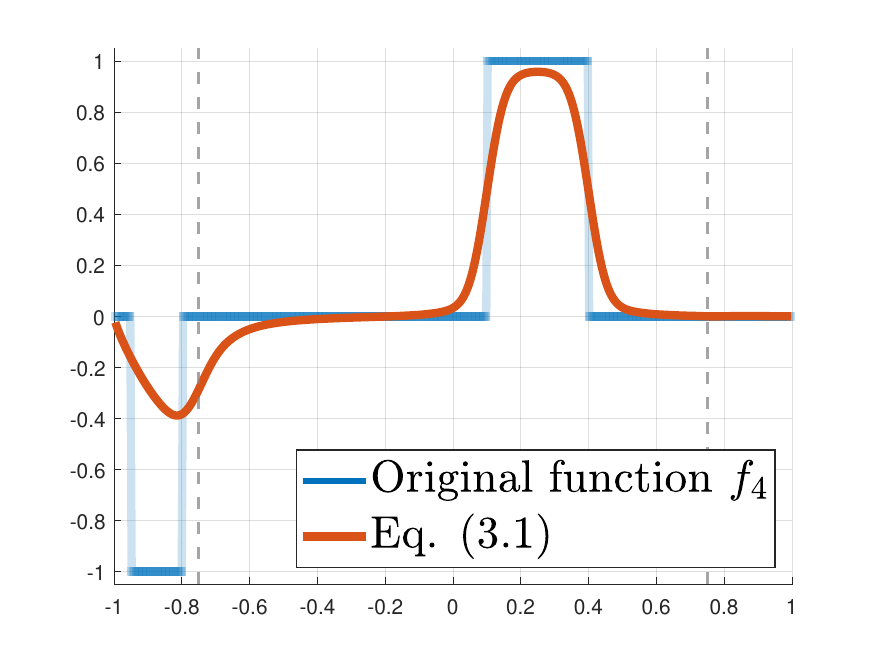}
\includegraphics[scale=0.32]{./Figuras/fig7041.pdf}
\includegraphics[scale=0.32]{./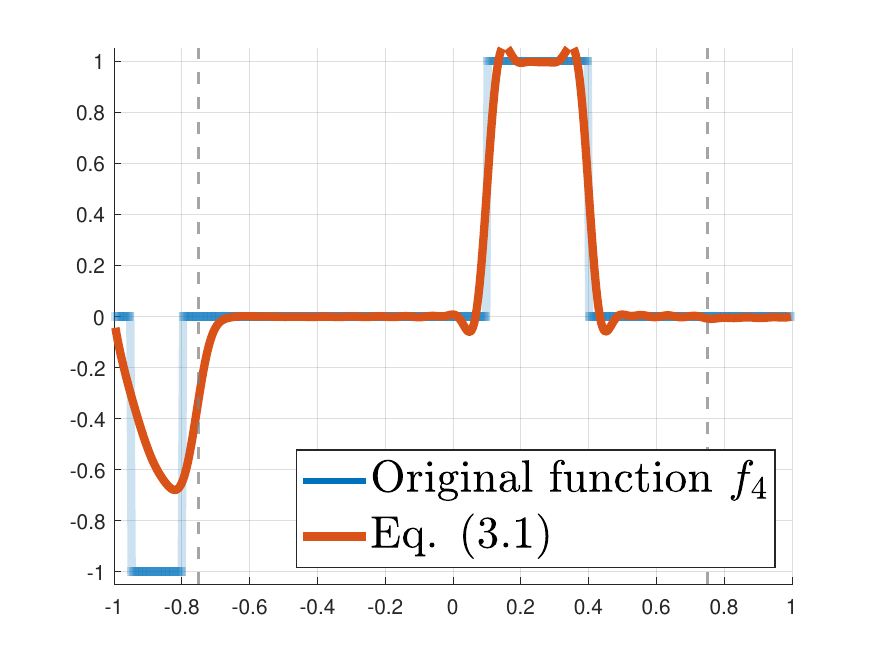}
\end{center}~\\[-10pt]
\caption{Recovery of $f$ for $\sigma_1$, $\sigma_2$ and $\sigma_3$.}
~\\[-10pt]
\label{fig70xy}
\end{figure}

\section{Conclusion}\label{sec-7}
In this study, we addressed the inverse source problem for the fractional heat equation, deriving a reconstruction formula for the spatial component of the source term from partial measurements of the system state and its time derivative. Our approach leverages the null controllability property of the fractional heat equation for $s\in(1/2,1)$, combined with spectral analysis and Volterra integral equations to recover the Fourier coefficients of the unknown source. The proposed methodology was validated through numerical experiments, demonstrating its effectiveness even for discontinuous or high-frequency sources.

Although our method provides a robust framework for source reconstruction, there are avenues for improvement and extension. One key direction is the development of techniques to better capture Fourier coefficients, particularly for high-frequency modes, where numerical errors can accumulate. This could involve adaptive strategies for eigenfunction approximation or refined quadrature rules for associated integral equations. In addition, extending this approach to broader classes of nonlocal operators or time-dependent source terms further enhance its applicability.

Future work could also explore the use of machine learning or data-driven methods to complement the analytical reconstruction formula, potentially improving  the accuracy in cases where the measurements are noisy or limited. Finally, investigating the inverse problem for fractional orders $s\leq 1/2$, where null controllability fails, presents a significant theoretical and computational challenge that warrants further research.

Our results contribute to the growing toolbox for inverse problems in nonlocal PDEs with potential applications in imaging, material science, and other fields. By bridging theoretical insights with practical numerics, this study lays the groundwork for future advances in this field.

\section*{Acknowledgements}
The authors wish to express their gratitude to Mart\'in Hern\'andez for taking time to critically review the manuscript.

Part of this research was conducted  while the third author visiting the FAU DCN-AvH Chair for Dynamics, Control, Machine Learning and Numerics at the Department of Mathematics at FAU, Friedrich- Alexander-Universit\"{a}t Erlangen-N\"{u}rnberg (Germany) with the support of a  Research Fellowship for Experienced Researchers from the  Alexander von Humboldt Foundation.

\appendix
\renewcommand{\thesection}{Appendix \Alph{section}}
\section{Derivation of an Improved Integration Rule}

\label{apA}

In Chapter \ref{sec-5}, we approximated the term  
\begin{equation}
\label{intrule0}
\int_{x_{j-1}}^{x_{j+1}}\phi_j(x)\varphi_m(x)\,dx \approx h\varphi_m(x_j),
\end{equation}
by using the trapezoidal rule. However, in this section, we derive a more refined integration rule of the form  
\begin{equation*}
\int_{x_{j-1}}^{x_{j+1}}\phi_j(x)f(x)\,dx \approx Af(x_{j-1}) + Bf(x_j) + Cf(x_{j+1}),
\end{equation*}
which can then be directly applied to $f(x) = \varphi_n(x)$, $n \geq 1$. This new rule is shown to be exact for polynomials with degree at least two degrees.

To derive coefficients $A$, $B$, and $C$, we ensure the exactness of polynomials of increasing degrees:  

For $f(x) = 1$
\begin{equation*}
A + B + C = \int_{x_{j-1}}^{x_{j+1}} \phi_j(x)\,dx = 2\int_{x_{j-1}}^{x_j} \left(1 - \frac{x_j - x}{h}\right)dx,
\end{equation*}
which leads to 
\begin{equation*}
A + B + C = h.
\end{equation*}

Now, for $f(x) = x$:
\begin{equation*}
Ax_{j-1} + Bx_j + Cx_{j+1} = \int_{x_{j-1}}^{x_{j+1}} \phi_j(x)x\,dx.
\end{equation*}
Evaluating this integral:
\begin{align*}
\int_{x_{j-1}}^{x_{j+1}} \phi_j(x)x\,dx &= \int_{x_{j-1}}^{x_j} \left(x - \frac{x_j x - x^2}{h}\right)dx + \int_{x_j}^{x_{j+1}} \left(x - \frac{x^2 - x_j x}{h}\right)dx.
\end{align*}
Simplifications yield:
\begin{equation*}
A - C = 0,
\end{equation*}
as $x_{j-1} = x_j - h$ and $x_{j+1} = x_j + h$.

In the case where $f(x)=x^2$, on one hand we have:
\begin{align*}
Ax_{j-1}^2 + Bx_j^2 + Cx_{j+1}^2 &= A(x_j - h)^2 + Bx_j^2 + C(x_j + h)^2 \\
&= A(x_j^2 - 2hx_j + h^2) + Bx_j^2 + C(x_j^2 + 2hx_j + h^2) \\
&= x_j^2(A + B + C) - 2hx_j(A - C) + h^2(A + C),
\end{align*}
and since $A + B + C = h$, this simplifies to:
\begin{equation*}
x_j^2h + h^2(A + C).
\end{equation*}
On the other hand, evaluating the integral:
\begin{align*}
\int_{x_{j-1}}^{x_{j+1}} \phi_j(x)x^2\,dx &= \int_{x_{j-1}}^{x_j} \left(x^2 - \frac{x_j x^2 - x^3}{h}\right)dx + \int_{x_j}^{x_{j+1}} \left(x^2 - \frac{x^3 - x_j x^2}{h}\right)dx \\
&= \frac{x_{j+1}^4 - 2x_j^4 + x_{j-1}^4}{12h},
\end{align*}
and substituting $x_{j-1} = x_j - h$ and $x_{j+1} = x_j + h$, we get:
\begin{equation*}
\int_{x_{j-1}}^{x_{j+1}} \phi_j(x)x^2\,dx = \frac{12x_j^2h^2 + 2h^4}{12h}.
\end{equation*}
This implies:
\begin{equation*}
A + C = \frac{h}{6}.
\end{equation*}

By solving the linear system formed by these equations, we conclude that $A = C = \frac{h}{12}$ and  $B = \frac{5h}{6}$.

Thus, an improved approximation for \eqref{intrule0} is:
\begin{equation}
(\phi_j)_n \approx \frac{h}{12} \Big(\varphi_n(x_{j-1}) + 10\varphi_n(x_j) + \varphi_n(x_{j+1})\Big).
\label{intrule3}
\end{equation}

This rule (see \eqref{intrule3}) exploits the structure of $\phi_j$. However, because  we often approximate in the discrete space $\varphi_m \in V_h$, we can leverage its representation in terms of the basis $\{\phi_j\}_{j=1}^N$. Thus, we computed
\begin{align*}
\int_{x_{j-1}}^{x_{j+1}}\phi_j(x)\varphi_m(x)\,dx &= \int_{x_{j-1}}^{x_j} \Big(\varphi_m(x_{j-1})\phi_j(x)\phi_{j-1}(x) + \varphi_m(x_j)|\phi_j(x)|^2\Big)dx \\
&\quad + \int_{x_j}^{x_{j+1}} \Big(\varphi_m(x_j)|\phi_j(x)|^2 + \varphi_m(x_{j+1})\phi_j(x)\phi_{j+1}(x)\Big)dx.
\end{align*}
Simplifications yield:
\begin{equation*}
\int_{x_{j-1}}^{x_{j+1}}\phi_j(x)\varphi_m(x)\,dx = \frac{h}{6} \Big(\varphi_m(x_{j-1}) + 4\varphi_m(x_j) + \varphi_m(x_{j+1})\Big).
\end{equation*}

\section*{Conflict of Interest declaration}
The authors declare that they have no affiliations with or involvement in any organization or entity with any
financial interest in the subject matter or materials discussed in this manuscript.


\begin{thebibliography}{10}

\bibitem{HAntil_SBartels_2017a}
H.~Antil and S.~Bartels.
\newblock Spectral approximation of fractional {PDE}s in image processing and
  phase field modeling.
\newblock {\em Comput. Methods Appl. Math.}, 17(4):661--678, 2017.

\bibitem{MR4065621}
H.~Antil, D.~Verma, and M.~Warma.
\newblock External optimal control of fractional parabolic {PDE}s.
\newblock {\em ESAIM Control Optim. Calc. Var.}, 26:Paper No. 20, 33, 2020.

\bibitem{MR4185290}
M.~BenSalah and M.~Hassine.
\newblock Inverse source problem for a diffusion equation involving the
  fractional spectral {L}aplacian.
\newblock {\em Math. Methods Appl. Sci.}, 44(1):917--936, 2021.

\bibitem{MR4718707}
M.~BenSaleh and H.~Maatoug.
\newblock Inverse source problem for a space-time fractional diffusion
  equation.
\newblock {\em Ric. Mat.}, 73(2):681--713, 2024.

\bibitem{MR4237942}
S.~Bhattacharyya, T.~Ghosh, and G.~Uhlmann.
\newblock Inverse problems for the fractional-{L}aplacian with lower order
  non-local perturbations.
\newblock {\em Trans. Amer. Math. Soc.}, 374(5):3053--3075, 2021.

\bibitem{biccaricontrollability}
U.~Biccari and V.~Hern\'andez-Santamar\'ia.
\newblock Controllability of a one-dimensional fractional heat equation:
  theoretical and numerical aspects.
\newblock {\em IMA J. Math. Control Inform.}, 36(4):1199--1235, 2019.

\bibitem{BWZ1}
U.~Biccari, M.~Warma, and E.~Zuazua.
\newblock Local elliptic regularity for the {D}irichlet fractional {L}aplacian.
\newblock {\em Adv. Nonlinear Stud.}, 17(2):387--409, 2017.

\bibitem{biccari2019controllability}
U.~Biccari, M.~Warma, and E.~Zuazua.
\newblock Controllability of the one-dimensional fractional heat equation under
  positivity constraints.
\newblock {\em Commun. Pure Appl. Anal., to appear}, 2019.

\bibitem{BBC}
K.~Bogdan, K.~Burdzy, and Z.-Q. Chen.
\newblock Censored stable processes.
\newblock {\em Probab. Theory Related Fields}, 127(1):89--152, 2003.

\bibitem{boyer2013}
F.~Boyer.
\newblock On the penalised {HUM} approach and its applications to the numerical
  approximation of null-controls for parabolic problems.
\newblock In {\em C{ANUM} 2012, 41e {C}ongr\`es {N}ational d'{A}nalyse
  {N}um\'{e}rique}, volume~41 of {\em ESAIM Proc.}, pages 15--58. EDP Sci., Les
  Ulis, 2013.

\bibitem{Caf3}
L.~A. Caffarelli and L.~Silvestre.
\newblock An extension problem related to the fractional {L}aplacian.
\newblock {\em Comm. Partial Differential Equations}, 32(7-9):1245--1260, 2007.

\bibitem{NPV}
E.~Di~Nezza, G.~Palatucci, and E.~Valdinoci.
\newblock Hitchhiker's guide to the fractional {S}obolev spaces.
\newblock {\em Bull. Sci. Math.}, 136(5):521--573, 2012.

\bibitem{DRV}
S.~Dipierro, X.~Ros-Oton, and E.~Valdinoci.
\newblock Nonlocal problems with {N}eumann boundary conditions.
\newblock {\em Rev. Mat. Iberoam.}, 33(2):377--416, 2017.

\bibitem{GW-CPDE}
C.~G. Gal and M.~Warma.
\newblock Nonlocal transmission problems with fractional diffusion and boundary
  conditions on non-smooth interfaces.
\newblock {\em Comm. Partial Differential Equations}, 42(4):579--625, 2017.

\bibitem{garcia2017source}
G.~C. Garc{\'\i}a, C.~Montoya, and A.~Osses.
\newblock A source reconstruction algorithm for the stokes system from
  incomplete velocity measurements.
\newblock {\em Inverse Problems}, 33(10):105003, 2017.

\bibitem{garcia2013heat}
G.~C. Garcia, A.~Osses, and M.~Tapia.
\newblock A heat source reconstruction formula from single internal
  measurements using a family of null controls.
\newblock {\em J. Inverse Ill-Posed Probl.}, 21(6):755--779, 2013.

\bibitem{MR4083776}
T.~Ghosh, A.~R\"uland, M.~Salo, and G.~Uhlmann.
\newblock Uniqueness and reconstruction for the fractional {C}alder\'on problem
  with a single measurement.
\newblock {\em J. Funct. Anal.}, 279(1):108505, 42, 2020.

\bibitem{MR4078233}
T.~Ghosh, M.~Salo, and G.~Uhlmann.
\newblock The {C}alder\'on problem for the fractional {S}chr\"odinger equation.
\newblock {\em Anal. PDE}, 13(2):455--475, 2020.

\bibitem{glowinski2008exact}
R.~Glowinski, J.-L. Lions, and J.~He.
\newblock {\em Exact and approximate controllability for distributed parameter
  systems. A numerical approach}.
\newblock Cambridge University Press, 2008.

\bibitem{HELIN20207498}
T.~Helin, M.~Lassas, L.~Ylinen, and Z.~Zhang.
\newblock Inverse problems for heat equation and space–time fractional
  diffusion equation with one measurement.
\newblock {\em Journal of Differential Equations}, 269(9):7498--7528, 2020.

\bibitem{kwasnicki2012eigenvalues}
M.~Kwa\'{s}nicki.
\newblock Eigenvalues of the fractional {L}aplace operator in the interval.
\newblock {\em J. Funct. Anal.}, 262(5):2379--2402, 2012.

\bibitem{leonori2015basic}
T.~Leonori, I.~Peral, A.~Primo, and F.~Soria.
\newblock Basic estimates for solutions of a class of nonlocal elliptic and
  parabolic equations.
\newblock {\em Discrete \& Continuous Dynamical Systems-A}, 35(12):6031--6068,
  2015.

\bibitem{SV2}
R.~Servadei and E.~Valdinoci.
\newblock On the spectrum of two different fractional operators.
\newblock {\em Proc. Roy. Soc. Edinburgh Sect. A}, 144(4):831--855, 2014.

\bibitem{tricomi1985integral}
F.~G. Tricomi.
\newblock {\em Integral equations}, volume~5.
\newblock Courier Corporation, 1985.

\bibitem{troltzsch2010optimal}
F.~Tr{\"o}ltzsch.
\newblock {\em Optimal control of partial differential equations: theory,
  methods, and applications}, volume 112.
\newblock American Mathematical Soc., 2010.

\bibitem{Val}
E.~Valdinoci.
\newblock From the long jump random walk to the fractional {L}aplacian.
\newblock {\em Bol. Soc. Esp. Mat. Apl.}, (49):33--44, 2009.

\bibitem{War}
M.~Warma.
\newblock The fractional relative capacity and the fractional {L}aplacian with
  {N}eumann and {R}obin boundary conditions on open sets.
\newblock {\em Potential Anal.}, 42(2):499--547, 2015.

\bibitem{War-ACE}
M.~Warma.
\newblock Approximate controllability from the exterior of space-time
  fractional diffusive equations.
\newblock {\em SIAM J. Control Optim.}, 57(3):2037--2063, 2019.

\bibitem{yamamoto1995stability}
M.~Yamamoto.
\newblock Stability, reconstruction formula and regularization for an inverse
  source hyperbolic problem by a control method.
\newblock {\em Inverse Problems}, 11(2):481, 1995.

\bibitem{zhang2011inverse}
Y.~Zhang and X.~Xu.
\newblock Inverse source problem for a fractional diffusion equation.
\newblock {\em Inverse Problems}, 27(3):035010, 2011.

\end{thebibliography}


\end{document}